\newcommand{\map}[1]{\;\xrightarrow{#1}\;}
\newcommand{\mil}{\varprojlim}
\newcommand{\dlim}{\varinjlim}
\newcommand{\iso}{\cong}
\newcommand{\stack}[2]{\;\genfrac{}{}{0pt}{}{#1}{#2}\;}
\newcommand{\Gal}{\mathrm{Gal}}
\newcommand{\Hom}{\mathrm{Hom}}
\newcommand{\End}{\mathrm{End}}
\newcommand{\alg}{\mathrm{alg}}
\newcommand{\Norm}{\mathrm{Norm}}
\newcommand{\N}{\mathbf{N}}
\newcommand{\Pic}{\mathrm {Pic}}
\newcommand{\Spec}{\mathrm {Spec}}
\newcommand{\ord}{\mathrm {ord}}
\newcommand{\Q}{\mathbf Q}
\newcommand{\Z}{\mathbf Z}
\newcommand{\C}{\mathbf C}
\newcommand{\co}{\mathcal O}
\newcommand{\cA}{\mathcal A}
\newcommand{\cB}{\mathcal B}
\newcommand{\cL}{\mathcal L}
\newcommand{\fa}{\mathfrak a}
\newcommand{\fb}{\mathfrak b}
\newcommand{\diff}{\mathfrak D}
\newcommand{\Hecke}{\mathbf T}
\newcommand{\stable}{\mathrm {stable}}
\newcommand{\unstable}{\mathrm {unstable}}
\newcommand{\fn}{\mathfrak n}
\newcommand{\fq}{\mathfrak q}
\newcommand{\fg}{\mathfrak g}
\newcommand{\fl}{\mathfrak l}
\newcommand{\uh}{ {\underline h} }
\newcommand{\uE}{ {\underline E} }
\newcommand{\uX}{ {\underline X}}
\newcommand{\uC}{{\underline C}}
\newcommand{\uD}{{\underline D}}
\newcommand{\bc}{\mathbf c}
\newcommand{\bd}{\mathbf d}
\newcommand{\bh}{\mathbf h}
\newcommand{\ubh}{ {\underline \bh} }
\newcommand{\uz}{{\underline z}}
\newcommand{\uy}{{\underline y}}
\newcommand{\anti}{\mathrm{anti}}
\begin{document}
\title{The Iwasawa theoretic Gross-Zagier theorem}
\author{Benjamin Howard}
\thanks{This research was supported by an NSF postdoctoral fellowship.}
\address{Department of Mathematics, Harvard University, Cambridge, MA.}
\curraddr{Department of Mathematics, University of Chicago, Chicago, IL.}

\begin{abstract}
We prove Mazur and Rubin's $\Lambda$-adic Gross-Zagier conjecture (under 
some restrictive hypotheses), which relates Heegner points in towers of
number fields to the $2$-variable $p$-adic $L$-function.
 The result generalizes Perrin-Riou's $p$-adic
Gross-Zagier theorem.  
\end{abstract}

\maketitle

\theoremstyle{plain}
\newtheorem{Thm}{Theorem}[subsection]
\newtheorem{Prop}[Thm]{Proposition}
\newtheorem{Lem}[Thm]{Lemma}
\newtheorem{Cor}[Thm]{Corollary}
\newtheorem{Con}[Thm]{Conjecture}
\newtheorem{BigTheorem}{Theorem}

\theoremstyle{definition}
\newtheorem{Def}[Thm]{Definition}

\theoremstyle{remark}
\newtheorem{Rem}[Thm]{Remark}

\renewcommand{\labelenumi}{(\alph{enumi})}
\renewcommand{\theBigTheorem}{\Alph{BigTheorem}}
\setcounter{section}{-1}


\section{Introduction}


Fix forever a rational prime $p>2$ and  embeddings
$\Q^\alg\hookrightarrow\Q_p^\alg$ and $\Q^\alg\hookrightarrow\C$.  
Fix also a normalized cuspidal newform
$f\in S_2(\Gamma_0(N),\C)$ and an imaginary quadratic field $K/\Q$
of discriminant $D$ and quadratic character $\epsilon$
satisfying the Heegner hypothesis 
that all primes dividing $N$ are split in $K$.  Assume that $(p,DN)=1$
and that $f$ is \emph{ordinary} at $p$ in the sense that the 
Fourier coefficient $a_p(f)\in\Q^\alg$ has $p$-adic absolute value $1$
at the fixed embedding $\Q^\alg\hookrightarrow\Q_p^\alg$.
We let $\cB_0$ be a number field
which is large enough to contain all Fourier coefficients of $f$,
denote by $\cA_0$ the integer ring of $\cB_0$, and by $\cA$ and $\cB$
the closures of $\cA_0$ and $\cB_0$ in $\Q_p^\alg$, respectively.
Let $H_s$ be the ring class field of $K$ of conductor $p^s$ and
let $H_\infty$ be the union over all $s$ of $H_s$.
We write $\Gamma=1+p\Z_p$,
and let $\gamma_0\in\Gamma$ be a topological generator.
Using methods of Hida \cite{Hida}, Perrin-Riou \cite{pr1, pr0}
attaches to $f$ a 
``two-variable'' $p$-adic $L$-function
$$\cL_f\in\cA[[\Gal(H_\infty/K)\times\Gamma]]\otimes_\cA\cB$$
which interpolates the special values of twists of the complex $L$-function of
$f$ at $s=1$.  The $p$-adic $L$-function
may be expanded as a power series in $\gamma_0-1$
\begin{equation}\label{two variable decomposition}
\cL_f=\cL_{f,0}+\cL_{f,1}\cdot(\gamma_0-1)+\ldots,
\end{equation}
with each $\cL_{f,k}\in \cA[[\Gal(H_\infty/K)]]\otimes_\cA\cB$.
The Heegner hypothesis forces the constant term $\cL_{f,0}$
to vanish, and  the goal of this paper is to relate the linear term
$\cL_{f,1}$ to the $p$-adic height pairings of Heegner points
in the $f$-component of the Jacobian $J_0(N)$.

For every nonnegative integer $s$ the Heegner hypothesis guarantees the
existence of a Heegner point $h_s\in X_0(N)(\C)$
of conductor $p^s$; that is, a cyclic $N$-isogeny of elliptic
curves $h_s:E_s\map{}E'_s$ over $\C$ such that both $E_s$ and $E'_s$ 
have complex multiplication
by \emph{exactly} $\co_s=\Z+p^s\co_K$, the order of conductor $p^s$ in $K$.
The family $\{h_s\}$ may be chosen
so that for every $s>1$ there is a  commutative diagram
$$
\xymatrix{
E_s\ar[r]^{h_s}\ar[d]&E_s'\ar[d]\\
E_{s-1}\ar[r]^{h_{s-1}}&E_{s-1}'
}$$
in which the vertical arrows are $p$-isogenies.
The elliptic curve $E_{s-1}$ (resp. $E'_{s-1}$) is then necessarily
the quotient of $E_s$ (resp. $E_s'$) by its $p\co_{s-1}$-torsion.
By the theory of complex multiplication (for example
\cite[Proposition 1.2]{cornut}) the curves $E_s$ and $E'_s$,
as well as the isogeny connecting them, can be defined over $H_s$, 
and so define a point $h_s\in X_0(N)(H_s)$.
One then has the Euler system relations (\S \ref{Hecke action})
$$
T_{p^{r}}(h_s)=\Norm_{H_{s+r}/H_s}(h_{s+r})+T_{p^{r-1}}(h_{s-1})$$
if $r,s>0$, and 
$$
T_p(h_0)=
\left\{\begin{array}{ll}
u\cdot \Norm_{H_1/H_0}(h_1)+ (\sigma_p+\sigma_p^*)h_0
& \mathrm{if\ } \epsilon(p)=1\\
u\cdot \Norm_{H_1/H_0}(h_1)& \mathrm{if\ } \epsilon(p)=-1
\end{array}\right.
$$
as divisors on $X_0(N)$, where $T_{p^r}$ is the usual Hecke
correspondence, $2u=|\co_K^\times|$, and $\sigma_p,\ \sigma_p^*\in\Gal(H_0/K)$
are the Frobenius automorphisms of the two primes above $p$ in the case
$\epsilon(p)=1$.
Abusing notation, we also denote by $h_s$ the image of $h_s$ in
$J_0(N)$ under the usual embedding $X_0(N)\map{}J_0(N)$
taking the cusp $\infty$ to the origin.

Let $\Hecke$ be the $\Q$-algebra generated by the action 
of the Hecke operators $T_\ell$ with $(\ell,N)=1$ on 
$J_0(N)$.  The semi-simplicity of $\Hecke$ gives a
decomposition of $\Hecke\otimes\cB$-modules
$$J_0(N)(H_s)\otimes_\Z\cB\iso\bigoplus_\beta J(H_s)_\beta$$
where $\beta$ ranges over $\Gal(\Q_p^\alg/\cB)$-orbits of
algebra homomorphisms $\beta:\Hecke\map{}\Q_p^\alg$.
Each summand is stable under the action of $\Gal(H_s/\Q)$,
and if $\beta(\Hecke)\subset\cB$ then 
$\Hecke$ acts on $J(H_s)_{\beta}$ through the character $\beta$.
The fixed newform $f$ determines one such homomorphism,
and we define $h_{s,f}$ to be the projection of
$h_s$ onto the associated factor $J(H_s)_f$.  Let $\alpha\in\cA^\times$ 
be the unit root of $X^2-a_p(f)X+p$. As in \cite{bd mumford}, define 
the \emph{regularized Heegner point}
$z_s\in J(H_s)_f$ for $s>0$ by
$$z_s=\frac{1}{\alpha^s}h_{s,f}-\frac{1}{\alpha^{s+1}}h_{s-1,f}.$$
In the case $s=0$ we define
$$
z_0=u^{-1}\cdot\left\{\begin{array}{ll}
\big(1-\frac{\sigma_p}{\alpha}\big)
\big(1-\frac{\sigma_p^*}{\alpha}\big) h_{0,f} & \mathrm{if\ }
\epsilon(p)=1\\ \\
\big(1-\frac{1}{\alpha^{2}}\big)h_{0,f}
&\mathrm{if\ } \epsilon(p)=-1.\end{array}
\right.$$
It follows from the Euler system relations
that the points $z_s$ are compatible under the norm (trace) maps
on $J(H_s)_f$.

The case $s=0$ of the following theorem is due to Perrin-Riou \cite{pr1},
and has been generalized to higher weight modular forms by 
Nekov\'{a}\v{r} \cite{nekovar}.

\begin{BigTheorem}\label{main result}
Assume that $D$ is odd and $\not=-3$, and that $\epsilon(p)=1$. 
For any character $\eta:\Gal(H_s/K)\map{}\Q_p^{\alg,\times}$
$$
\eta(\kappa_s)\log_p(\gamma_0)\cdot \cL_{f,1}(\eta)=
\sum_{\sigma\in\Gal(H_s/K)}\eta(\sigma)
\langle z_s^\vee,z_s^\sigma\rangle
$$
where $\kappa_s\in\Gal(H_s/K)$ is the Artin symbol of 
$\mathfrak{d}_s=(\sqrt{D}\co_K)\cap \co_s$,
$$
\langle\ ,\ \rangle=\langle\ ,\ \rangle_{J_0(N),H_s}
:J_0(N)^\vee(H_s)\times J_0(N)(H_s)\map{}\Q_p
$$ 
is the $p$-adic height pairing (\ref{global abelian}) extended
$\cB$ bilinearly, and $z_s^\vee$ is the image of $z_s$
under the canonical principal polarization of $J_0(N)$ (extended
$\cB$-linearly on Mordell-Weil groups)
$$
J_0(N)(H_s)\otimes\cB\iso J_0(N)^\vee(H_s)\otimes\cB.
$$
Both sides of the stated equality are independent of the choice of $\gamma_0$.
\end{BigTheorem}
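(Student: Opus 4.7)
The plan is to prove the identity at the level of a single $\Lambda$-adic measure on $\Gal(H_\infty/K)$, of which both sides of the stated equality are the value at $\eta$. Let $\mathbf{z}_\infty = \mil_s z_s$ denote the norm-compatible system of regularized Heegner points, which by the Euler system relations recorded above lives in the Iwasawa cohomology module $H^1_{\mathrm{Iw}}(H_\infty/K, T_f)$, where $T_f$ is the $p$-adic Tate module of the $f$-isotypic component of $J_0(N)$. The first step is to set up a $\Lambda$-adic cyclotomic $p$-adic height pairing following Nekov\'{a}\v{r}, with values in $\cA[[\Gal(H_\infty/K)]]\otimes_\cA\cB$, and to identify the right-hand side of the theorem with the value at $\eta$ of the $\Lambda$-adic height of $\mathbf{z}_\infty$ against $\mathbf{z}_\infty^\vee$.

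For the left-hand side, the construction of $\cL_f$ via Hida families realizes its specialization at the trivial cyclotomic character as a Rankin--Selberg integral of $f$ against the theta series attached to the tower of CM points. The cyclotomic derivative $\cL_{f,1}$ then admits a description in terms of the derivative of a family of Eisenstein series and, equivalently, in terms of Perrin-Riou's big logarithm map applied to the ordinary part of $\mathbf{z}_\infty$. The factor $\eta(\kappa_s)$ reflects the shift between the analytic and geometric CM period lattices induced by $\mathfrak{d}_s$, while $\log_p(\gamma_0)$ arises as the derivative of the cyclotomic character along $\Gamma$ at the trivial character.

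With both sides expressed in terms of $\mathbf{z}_\infty$, the theorem reduces to comparing the Nekov\'{a}\v{r} $\Lambda$-adic height of $\mathbf{z}_\infty$ to the image of $\mathbf{z}_\infty$ under Perrin-Riou's big logarithm. For this one uses the local decomposition $\langle\ ,\ \rangle = \sum_v \langle\ ,\ \rangle_v$. The Heegner hypothesis, together with the assumptions that $D$ is odd, $D\neq-3$, and $\epsilon(p)=1$, ensures that the local contributions from places $v\nmid p$ vanish for the $\Lambda$-adic height of the norm-compatible system $\mathbf{z}_\infty$, reducing the problem to the sum of local pairings at primes above $p$. These are computed via the ordinary filtration on $T_f$ and a Colmez-style explicit reciprocity law, which expresses $\langle\ ,\ \rangle_v$ as a residue of the big logarithm applied to the universal norms comprising $\mathbf{z}_\infty$.

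The principal obstacle is this local-at-$p$ computation in the anticyclotomic setting: since $\epsilon(p)=1$ the prime $p$ splits in $K$ and each prime above $p$ is totally ramified in $H_\infty/H_0$, so the Coleman map acquires additional anticyclotomic ramification that must be tracked alongside the cyclotomic deformation, with the regularization factor $\alpha$ entering at each level through the definition of $z_s$. Perrin-Riou's $s=0$ theorem serves both as the seed of the computation and as a normalization benchmark; the extension to general $s$ then follows by interpolating the resulting identity along the anticyclotomic tower, using the norm compatibility of $\{z_s\}$ together with the interpolation property of $\cL_f$.
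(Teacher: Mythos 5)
Your reduction hinges on the claim that the local contributions to the $\Lambda$-adic height of $\mathbf{z}_\infty$ from places $v\nmid p$ vanish. This is false, and those contributions carry the entire arithmetic content of the theorem. In the actual proof, the Fourier coefficients of the ``arithmetic'' $p$-adic modular form $F_\sigma$ are decomposed as $a_m(F_\sigma)=\sum_v a_m(F_\sigma)_v$ over finite places of $H_s$, and only the split primes $\ell\neq p$ contribute zero (Proposition \ref{split primes}). At inert and ramified $\ell\neq p$ the local N\'eron symbol is computed explicitly by intersection theory on the Katz--Mazur model of $X_0(N)$ together with the quaternionic lattice counting of \S\ref{nonsplit}, and these terms are nonzero: they produce exactly the nonsplit-prime terms in Perrin-Riou's formula (Corollary \ref{pr calc 2}) for the Fourier coefficients of $G_{\sigma\kappa}$. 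There is likewise no Colmez-type explicit reciprocity law at $p$: the local symbol at $v\mid p$ is never evaluated. Instead Proposition \ref{annihilation} shows --- using $\epsilon(p)=1$, Atkin--Lehner multiplicity one for the Hecke eigensystems $\beta\neq\beta_f$, and the boundedness of the $p$-adic N\'eron symbol on universal norms from a ramified $\Z_p$-extension (Proposition \ref{curve height}(e)) --- that the modular form encoding the $p$-local term is annihilated by the Hida-theoretic functional $L_f$, so it drops out after projection.

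So your framing via Iwasawa cohomology, Nekov\'a\v{r} $\Lambda$-adic heights and Coleman maps is a genuinely different route, but both of its crucial inputs are missing: the vanishing away from $p$ is simply wrong, and the anticyclotomic explicit reciprocity law for the Hida-theoretic two-variable $\cL_f$ is not something either the paper or its references establish. The actual argument is Gross--Zagier-style through and through: build two $p$-adic modular forms with the same image under $L_f$, compare Fourier coefficients place by place, and use the Hida projector to kill the one piece you cannot compute. Finally, the paper does not interpolate from Perrin-Riou's $s=0$ result; it re-runs the full Gross--Zagier calculation at each conductor $p^s$, with the ``removal of the Euler factor at $p$'' built into the quaternion sums through the constraint $\ord_p\N(b^+)=\ord_p\N(b^-)=-2s$, and the extra hypothesis $D\neq-3$ enters precisely in the local analysis above $p$ of \S\ref{p neron}.
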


\begin{Rem}
The $p$-adic height pairing $\langle\ ,\ \rangle_{J_0(N),H_s}$ 
referred to in the theorem is not uniquely determined
(see Proposition \ref{pr height} and Remark \ref{base compatible}).
We emphasize that Theorem \ref{main result} holds
for \emph{any} choice of $p$-adic height pairing
$\langle\ ,\ \rangle_{J_0(N),H_s}$ as in (\ref{global abelian}).
\end{Rem}

\begin{Rem}
Nekov\'{a}\v{r} \cite{nekovar} claims that there is a sign error
in the statement of \cite[Th\'eor\`eme 1.3]{pr1}, but there
is no small amount of confusion over Perrin-Riou's normalization 
of the height pairing.  This is primarily due to 
the change of sign in Remark \ref{Theta}, which is our reason 
for maintaining the distinction between $J_0(N)$ and $J_0(N)^\vee$,
and between the pairings (\ref{global abelian}) and (\ref{global curve}).
It is also possible that \cite{pr1} uses a different convention for 
the reciprocity law of class field theory; see \S \ref{global pairing}.
\end{Rem}

\begin{Rem}
Theorem \ref{main result} should hold without the stated 
hypotheses on $D$ and $\epsilon(p)$.
We note that the hypothesis $D\not=-3$ 
is not assumed in \cite{pr1}.
\end{Rem}

Now suppose $f$ has rational Fourier coefficients, $\cB_0=\Q$, and $E$
belongs to the isogeny class of (ordinary!) elliptic curves
associated to $f$. Fix a modular parametrization $X_0(N)\map{\phi}E$,
and let 
$$
\phi_*:J_0(N)\map{}E
\hspace{1cm}
\phi^*:E^\vee\map{}J_0(N)^\vee
$$
be the Albanese and Picard maps.
Extending $\phi_*$ and $\phi^*$ to $\Q_p$-linear maps on Mordell-Weil groups, 
let $y_s=\phi_*(z_s)\in E(H_s)\otimes\Z_p$ and let $y_s^\vee$ be the 
unique point of $E^\vee(H_s)\otimes\Q_p$ with
$\phi^*(y_s^\vee)=z_s^\vee$.
The canonical polarization $E\iso E^\vee$ identifies
$y_s$ with $\deg(\phi)\cdot y_s^\vee$.  The points $y_s$ and $y_s^\vee$
are norm-compatible as $s$ varies (since the $z_s$ are).
Define the Heegner $L$-function
$\mathcal{L}_\mathrm{Heeg}\in \Z_p[[\Gal(H_\infty/K)]]\otimes \Q_p$
by 
$$
\mathcal{L}_\mathrm{Heeg}= \mil \sum_{\sigma\in\Gal(H_s/K)}
\langle y_s^\vee,y_s^\sigma\rangle_{E,H_s}\cdot \sigma
$$
where the pairing is the $p$-adic height pairing of 
(\ref{global abelian}) extended $\Q_p$-linearly 
(and \emph{not} the height pairing of
(\ref{global curve}); as $E$ is both a curve and an abelian variety,
we have reached a notational singularity). 
Unlike the height pairing of Theorem \ref{main result}, the
pairing $\langle\ ,\ \rangle_{E,H_s}$ is canonical.  This
follows from the  ordinarity of  $E$ at $p$ and the 
uniqueness claims of Proposition \ref{pr height}.
A priori, $\mathcal{L}_\mathrm{Heeg}$ lives in the larger space
$\mil \Q_p[[\Gal(H_s/K)]]$, but it is known that the denominators
in the height pairing are bounded as $s$ varies (this follows from 
the construction of \cite{pr1}, although it is not explicitly stated
there; note also Proposition \ref{bounded denominators} below).

\begin{BigTheorem}\label{main result II}
Under the hypotheses (and notation) of Theorem \ref{main result},
$$
\kappa\cdot \log_p(\gamma_0)\cdot \cL_{f,1} = \mathcal{L}_\mathrm{Heeg}
$$
in $\Z_p[[\Gal(H_\infty/K)]]\otimes \Q_p$, where 
$\kappa=\mil \kappa_s\in\Gal(H_\infty/K)$.
\end{BigTheorem}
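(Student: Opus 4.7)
The strategy is to deduce Theorem \ref{main result II} from Theorem \ref{main result} by first upgrading the character-by-character identity of Theorem \ref{main result} to an equality of elements in the group ring $\cB[G_s]$ at each finite level (with $G_s = \Gal(H_s/K)$), then invoking the functoriality of the $p$-adic height pairing under the modular parametrization $\phi$ to replace height symbols on $J_0(N)$ by those on $E$, and finally passing to the inverse limit over $s$.

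In detail, let $\pi_s : \cA[[\Gal(H_\infty/K)]] \otimes \cB \to \cB[G_s]$ be the natural projection. Theorem \ref{main result} asserts that the two elements
$$
\kappa_s \cdot \log_p(\gamma_0) \cdot \pi_s(\cL_{f,1}) \quad\text{and}\quad \sum_{\sigma \in G_s} \langle z_s^\vee, z_s^\sigma\rangle_{J_0(N),H_s}\cdot\sigma
$$
of $\cB[G_s]$ have the same image under every character $\eta: G_s \to \Q_p^{\alg,\times}$; since characters separate points of $\Q_p^{\alg}[G_s]$ (as $G_s$ is finite abelian), this forces equality in $\cB[G_s]$. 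Applying the adjunction $\langle \phi^*(w^\vee), x\rangle_{J_0(N),H_s} = \langle w^\vee, \phi_*(x)\rangle_{E,H_s}$ with $w^\vee = y_s^\vee$ and $x = z_s^\sigma$, and recalling that $\phi^*(y_s^\vee) = z_s^\vee$ and $\phi_*(z_s^\sigma) = y_s^\sigma$, transforms this into
$$
\kappa_s \cdot \log_p(\gamma_0) \cdot \pi_s(\cL_{f,1}) = \sum_{\sigma \in G_s} \langle y_s^\vee, y_s^\sigma\rangle_{E,H_s} \cdot \sigma
$$
in $\Q_p[G_s]$. These identities are compatible under the surjections $\Q_p[G_{s+1}] \twoheadrightarrow \Q_p[G_s]$: the $\pi_s$ are compatible by construction, while on the right one uses the norm-compatibility $\Norm_{H_{s+1}/H_s}(y_{s+1}) = y_s$ (and likewise for $y^\vee$) together with the base-change property $\langle y_{s+1}^\vee, y\rangle_{E,H_{s+1}} = \langle y_s^\vee, y\rangle_{E,H_s}$ for $y \in E(H_s)$. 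Passing to the inverse limit, and using the bounded-denominator statement cited just before the theorem to confirm the limit lies in $\Z_p[[\Gal(H_\infty/K)]] \otimes \Q_p$, yields the claimed identity.

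The main obstacle I anticipate is the functoriality step. Theorem \ref{main result} holds for \emph{any} choice of $p$-adic height pairing on $J_0(N)$, whereas the deduction above requires a pairing whose restriction to the $f$-component descends under $\phi$ to the canonical pairing on $E$ (uniquely determined by ordinarity of $E$). The existence of such a compatible choice rests on the fact that both pairings arise from a single global datum---a splitting of the Hodge filtration on the universal vectorial extension, or equivalently a system of norm-compatible $p$-adic logarithms on the relevant formal groups---and is essentially built into the construction used in \cite{pr1}.
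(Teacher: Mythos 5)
Your proposal is correct and follows essentially the same route as the paper: Theorem~\ref{main result} gives the character-by-character equality, the key intermediate fact is precisely the adjunction
$$
\langle \phi^*c, d\rangle_{J_0(N),H_s}=\langle c,\phi_* d\rangle_{E,H_s},
$$
which with $\phi^*(y_s^\vee)=z_s^\vee$ and $\phi_*(z_s^\sigma)=y_s^\sigma$ yields
$\langle y_s^\vee,y_s^\sigma\rangle_{E,H_s}=\langle z_s^\vee,z_s^\sigma\rangle_{J_0(N),H_s}$,
after which the inverse limit is routine using the norm compatibility of the $y_s$ and Proposition~\ref{bounded denominators}.

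The one place where your framing differs from the paper's, and where the paper's argument is cleaner than you anticipate, is the ``main obstacle'' in your final paragraph. You worry that Theorem~\ref{main result} is proved for an \emph{arbitrary} height pairing on $J_0(N)$, and that one must pick a special compatible choice (arising from a common global datum with the pairing on $E$). In fact no choice is required: the paper fixes the local symbols $\langle\ ,\ \rangle_{J_0(N)_v,\rho_{H_{s,v}}}$ at primes $v\mid p$ however one likes, then \emph{defines} a new symbol on $E_v$ by transport of structure along $\phi_*$ and $\phi^*$ (allowing division by an integer $n$ to handle the finite cokernel of $\phi_*$). One checks directly that this transferred symbol satisfies all the axioms of Proposition~\ref{pr height}. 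Since $E$ is ordinary at $p$, the local symbol on $E_v$ satisfying those axioms is \emph{unique}, so the transferred symbol coincides with the canonical one regardless of the initial choice on $J_0(N)_v$. Thus the adjunction holds for \emph{every} choice of pairing on $J_0(N)$, with no appeal to a compatible Hodge splitting or norm-compatible system of logarithms. This matches the paper's remark that Theorem~\ref{main result} holds for any choice of pairing on $J_0(N)$, while the pairing on $E$ is canonical; the two statements of Theorems~\ref{main result} and \ref{main result II} are consistent precisely because the transfer is choice-independent.
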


Theorem \ref{main result II} is a (very slightly) strengthened form
of a conjecture of Mazur and Rubin \cite[Conjecture 9]{MR}.
To make the connection between our theorem and the conjecture of 
Mazur and Rubin more explicit, first note that the construction
of the $p$-adic height $\langle\ ,\ \rangle_{E,H_s}$ depends on the
auxillary choice of the idele class character 
$\rho_{H_s}:\mathbf{A}_{H_s}^\times/H_s^\times
\map{}\Gamma\map{\log_p}\Z_p$ defined at the start of \S \ref{global pairing}.
Define $\Gamma_{\Q_p}=\Gamma\otimes_{\Z_p} \Q_p$ and
extend $\log_p$ to a $\Q_p$-linear isomorphism 
$\Gamma_{\Q_p}\iso\Q_p$.
Define a pairing
$$
\langle\ ,\ \rangle^\Gamma_{E,H_s}: E^\vee(H_s)\times E(H_s)\map{}\Gamma_{\Q_p}
$$
by $\langle\ ,\ \rangle_{E,H_s}=\log_p\circ \langle\ ,\ \rangle^\Gamma_{E,H_s}$
and set
$$
\mathcal{L}^\Gamma_{\mathrm{Heeg}}=\mil \sum_{\sigma\in\Gal(H_s/K)}
\langle y_s,y_s^\sigma\rangle^\Gamma_{E,H_s}\cdot \sigma
\in \Z_p[[\Gal(H_\infty/K)]]\otimes\Gamma_{\Q_p},
$$
where we have now identified $E\iso E^\vee$ in the canonical way,
so that 
$$
(1\otimes\log_p)(\mathcal{L}^\Gamma_\mathrm{Heeg})=
\deg(\phi)\cdot \mathcal{L}_\mathrm{Heeg}.
$$
Let $I$ be the kernel of the projection 
$$
\Z_p[[\Gal(H_\infty/K)\times\Gamma]]\otimes \Q_p
\map{}\Z_p[[\Gal(H_\infty/K)]]\otimes \Q_p
$$
and let $w:\Z_p[[\Gal(H_\infty/K)]]\otimes \Gamma_{\Q_p} \map{} I/I^2$ 
be the isomorphism defined by  
$w(\lambda\otimes \gamma)=\lambda (\gamma-1)$.
Thus 
$w(\cL_\mathrm{Heeg}^\Gamma)=\deg(\phi)
\log_p(\gamma_0)^{-1}\cL_\mathrm{Heeg}\cdot (\gamma_0-1)$.
As $\cL_{f,0}=0$, the $p$-adic $L$-function $\cL_f$ is contained in 
$I$, and Theorem \ref{main result II} may be rewritten as
$$
\kappa\cdot\cL_f=\kappa\cdot\cL_{f,1}\cdot(\gamma_0-1)=
\frac{1}{\log_p(\gamma_0)}\cL_\mathrm{Heeg}\cdot(\gamma_0-1)= 
\frac{1}{\deg(\phi)}w(\mathcal{L}^\Gamma_{\mathrm{Heeg}})
$$
in $I/I^2$.

Now assume the hypotheses of Theorem \ref{main result}, and also 
that $\Gal(K^\alg/K)$ surjects onto the $\Z_p$-module automorphisms of
$T_p(E)$ and that $p$ does not divide the
class number of $K$. Let $K_\infty\subset H_\infty$ be the
anticyclotomic $\Z_p$-extension of $K$, and set $K_s=K_\infty\cap H_{s+1}$,
so that $[K_s:K]=p^{s}$. 
Define $\Lambda_\anti=\Z_p[[\Gal(K_\infty/K)]]\otimes\Q_p$, and
$$ 
\mathcal{S}(K_s,E)=\mil_k\mathrm{Sel}_{p^k}(K_s,E)\hspace{1cm}
\mathcal{S}_\infty=(\mil_s \mathcal{S}(K_s,E))\otimes\Q_p
$$
$$
X=\Hom_{\Z_p}(\mathrm{Sel}_{p^\infty}(K_\infty,E),\Q_p/\Z_p)\otimes\Q_p.
$$
Let $\tilde{y}_\infty\in \mathcal{S}_\infty$ be the inverse limit of 
$\tilde{y}_s=\Norm_{H_{s+1}/K_s}(y_{s+1})\in \mathcal{S}(K_s,E)$, and define
the \emph{Heegner submodule}  $\mathcal{H}\subset \mathcal{S}_\infty$ to be
the $\Lambda_\anti$-submodule generated by $\tilde{y}_\infty$.
It follows from work of Cornut and Vatsal \cite{cornut} 
that $\mathcal{H}$ is a free 
$\Lambda_\anti$-module of rank one.
It is known by work of Bertolini and the author \cite{bert, me}
that $X$ is a finitely-generated rank-one 
$\Lambda_\anti$-module, 
$\mathcal{S}_\infty$ is free of rank one, and
\begin{equation}\label{main conjecture}
\mathrm{char}(X_\mathrm{tors})\mathrm{\ \ divides\ \ }
\mathrm{char}(\mathcal{S}_\infty/\mathcal{H})\cdot 
\mathrm{char}(\mathcal{S}_\infty/\mathcal{H})^\iota
\end{equation}
where $X_\mathrm{tors}$ denotes the $\Lambda_\anti$-torsion submodule
of $X$, and $\lambda\mapsto\lambda^\iota$ is the involution of $\Lambda_\anti$
which is inversion on group-like elements.
Perrin-Riou \cite[Conjecture B]{pr2} has conjectured 
that the divisibility (\ref{main conjecture}) is an equality.

\begin{Prop}\label{bounded denominators}(Perrin-Riou \cite{pr2,pr3,pr4})
There is a $p$-adic height pairing
$$
\mathfrak{h}_s:\mathcal{S}(K_s,E)\times \mathcal{S}(K_s,E)\map{}c^{-1}\Z_p
$$
whose restriction to the image of the Kummer map
$E(K_s)\otimes\Z_p\map{}\mathcal{S}(K_s,E)$ agrees with the pairing
$\langle\ ,\ \rangle_{E,K_s}$ of (\ref{global abelian})
after identifiying $E\iso E^\vee$ in the canonical way,
where $c\in\Z_p$ is independent of $s$.
\end{Prop}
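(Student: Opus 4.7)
The plan is to construct $\mathfrak{h}_s$ via the Selmer complex formalism of \cite{nekovar} (equivalently the Iwasawa-theoretic construction of \cite{pr2,pr3,pr4}), which extends the classical $p$-adic height from $E(K_s)\otimes\Z_p$ to the full Selmer group, and then to bound denominators uniformly by descending from the Iwasawa limit on the anticyclotomic tower.

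\emph{Construction.} Since $E$ is ordinary at $p$, for each place $v$ of $K_s$ above $p$ the Tate module $T_pE$ has a canonical Galois-stable filtration $0\to F^+_v T_pE\to T_pE\to F^-_v T_pE\to 0$ with one-dimensional graded pieces. The Greenberg local condition defined by the image of $H^1(K_{s,v},F^+_v T_pE)$ for $v\mid p$, combined with the unramified condition elsewhere, cuts out a Selmer group that, under the ordinary hypothesis, coincides with $\mathcal{S}(K_s,E)$. The cup product in global Galois cohomology paired against the Weil pairing $T_pE\otimes T_pE\to\Z_p(1)$ and composed with the local reciprocity maps followed by the idele class character $\rho_{K_s}$ of \S\ref{global pairing} yields the pairing $\mathfrak{h}_s:\mathcal{S}(K_s,E)\times\mathcal{S}(K_s,E)\to\Q_p$.

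\emph{Agreement on Mordell--Weil.} By the comparison theorem of \cite{nekovar} relating the Selmer-complex height pairing to classical $p$-adic heights, the restriction of $\mathfrak{h}_s$ to the Kummer image of $E(K_s)\otimes\Z_p$ agrees with the Schneider--Perrin-Riou pairing; the uniqueness statement in Proposition \ref{pr height}, which applies precisely because $E$ is ordinary at $p$, identifies this with $\langle\,,\,\rangle_{E,K_s}$ of (\ref{global abelian}).

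\emph{Uniform denominator.} Assembling the $\mathfrak{h}_s$ across the tower produces a $\Lambda_\anti$-bilinear Iwasawa pairing
\[
\mathfrak{h}_\infty:\mathcal{S}_\infty\times\mathcal{S}_\infty\to\Lambda_\anti\otimes\Q_p
\]
that specializes at each $s$ to $\mathfrak{h}_s$. Under the stated hypotheses ($p\nmid h_K$ and surjectivity of Galois onto the automorphisms of $T_pE$), the results of Bertolini and the author \cite{bert,me} imply that $\mathcal{S}_\infty$ is free of rank one over $\Lambda_\anti$; fix a generator $e$. The single value $\mathfrak{h}_\infty(e,e)\in\Lambda_\anti\otimes\Q_p$ has a bounded denominator $c\in\Z_p$, so $c\cdot\mathfrak{h}_\infty$ is $\Lambda_\anti$-valued by bilinearity. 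Specializing under $\Lambda_\anti\twoheadrightarrow\Z_p[\Gal(K_s/K)]\otimes\Q_p$ and evaluating on any $\sigma\in\Gal(K_s/K)$ yields values of $\mathfrak{h}_s$ in $c^{-1}\Z_p$ with $c$ independent of $s$.

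\emph{Main obstacle.} The genuine subtlety is constructing the integral Iwasawa pairing $\mathfrak{h}_\infty$ and verifying its compatibility with $\mathfrak{h}_s$ at every level: the integral Selmer $\mil_s\mathcal{S}(K_s,E)$ is only pseudo-free over $\Z_p[[\Gal(K_\infty/K)]]$, and one must exploit the freeness of $\mathcal{S}_\infty$ after tensoring with $\Q_p$ to clear denominators by a single constant rather than a constant growing with $s$. This uniform clearing, carried out in \cite{pr2,pr3,pr4} via Nekov\'{a}\v{r}'s machinery, is the content of the proposition.
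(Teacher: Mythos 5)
The paper does not prove this proposition; it cites it outright as a theorem of Perrin-Riou \cite{pr2,pr3,pr4}, so there is no in-text argument to compare against and your sketch must stand on its own. The construction step and the Mordell--Weil comparison step are reasonable in outline (the Greenberg condition for ordinary $E$, cup product against $\rho_{K_s}$, and the uniqueness clause of Proposition \ref{pr height} to identify the restriction with $\langle\,,\,\rangle_{E,K_s}$). The problem is the uniform-denominator step, which is the entire content of the proposition, and there your argument is circular as written: in the paper $\mathfrak{h}_\infty$ is introduced \emph{after}, and exactly \emph{because of}, this proposition --- the formula $\mathfrak{h}_\infty(\mil a_s,\mil b_s)=\mil\sum_\sigma\mathfrak{h}_s(a_s,b_s^\sigma)\sigma$ defines an element of $\Lambda_\anti$ rather than of $\mil\Q_p[\Gal(K_s/K)]$ only once the denominators of $\mathfrak{h}_s$ are known to be bounded. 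To invoke $\mathfrak{h}_\infty$ here you would have to construct it directly by Iwasawa-cohomological means (which is indeed Perrin-Riou's route), and then the nontrivial content is precisely the descent to finite level.

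That descent is where the genuine gap sits, and your final paragraph names the obstacle without resolving it. Knowing $\mathfrak{h}_\infty(e,e)\in\Lambda_\anti$ for a $\Lambda_\anti$-generator $e$ of $\mathcal{S}_\infty$ bounds $\mathfrak{h}_s$ only on the image at level $s$ of the $\Z_p[[\Gal(K_\infty/K)]]$-lattice generated by an integral lift $\tilde e\in\mil_s\mathcal{S}(K_s,E)$; it does not by itself bound $\mathfrak{h}_s$ on all of $\mathcal{S}(K_s,E)$, because the index of that projected lattice in $\mathcal{S}(K_s,E)$ could a priori grow with $s$. Closing this requires an \emph{integral} control statement for $\mathcal{S}(K_s,E)$ along the anticyclotomic tower (or an a priori boundedness built into the cohomological construction, as in Perrin-Riou), and ``exploit the freeness of $\mathcal{S}_\infty$'' is a restatement of the problem rather than a solution: freeness is a statement about the $\Q_p$-vector space $\mathcal{S}_\infty$ and says nothing about the integral indices. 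As written, the proof defers its central step to the very references being proved.
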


There is a $\Lambda_\anti$-adic height pairing
$\mathfrak{h}_\infty:\mathcal{S}_\infty\times \mathcal{S}_\infty
\map{}\Lambda_\anti$
defined by 
$$
\mathfrak{h}_\infty(\mil a_s,\mil b_s)=\mil \sum_{\sigma\in\Gal(K_s/K)} 
\mathfrak{h}_s(a_s,b_s^\sigma)\cdot \sigma,
$$
and we define the $\Lambda_\anti$-adic
regulator $\mathcal{R}$ to be the image of this map.
If $$e:\Z_p[[\Gal(H_\infty/K)]]\otimes\Q_p\map{}\Lambda_\anti$$ is 
the natural projection, then the norm compatibility of the 
height pairing (see Remark \ref{base compatible}; in this case the 
compatibility is \emph{automatic} by the uniqueness claim of
Proposition \ref{pr height} and the fact that $E$ is ordinary at $p$)
gives 
$$
e(\cL_\mathrm{Heeg})\Lambda_\anti=
\mathfrak{h}_\infty(\tilde{y}_\infty, \tilde{y}_\infty)\Lambda_\anti
=\mathrm{char}(\mathcal{S}_\infty/\mathcal{H})\cdot 
\mathrm{char}(\mathcal{S}_\infty/\mathcal{H})^\iota \cdot \mathcal{R}.
$$ 
If we assume $\mathcal{R}\not=0$ then Theorem \ref{main result II}
allows us to rewrite the divisibility
(\ref{main conjecture}) as
\begin{equation}\label{mc II}
\mathrm{char}(X_\mathrm{tors})\mathrm{\ \ divides\ \ }
\frac{e(\cL_{f,1})\Lambda_\anti}{\mathcal{R}},
\end{equation}
which now has the look and feel of a  $\Lambda_\anti$-adic form
of the Birch and Swinnerton-Dyer conjecture
and no longer makes any mention of Heegner points.  
It was conjectured by Mazur and Rubin \cite[Conjecture 6]{MR} 
that $\mathcal{R}=\Lambda_\anti$, but those authors 
have since retracted that conjecture.

Note that the hypothesis on the action of Galois on the
$p$-adic Tate module excludes the case where $E$ has complex multiplication.
Results similar to (\ref{mc II}) in the so-called exceptional 
case where $E$ has complex multiplication by $K$ can be found in  
\cite{agboola}.

The author thanks Dick Gross for several helpful conversations, 
Brian Conrad for helpful correspondence, and the anonymous 
referee for suggesting many improvements to an earlier draft of
this article.


\subsection{Plan of the proof}
\label{the proof}


Enlarging $\cB_0$ if needed, we may assume that $\cA_0$
contains the Fourier coefficients of all normalized newforms of
level dividing $N$, so that all algebra maps $\Hecke\map{}\Q^\alg$
take values in $\cB_0$.
Fix $s>0$ and define, for each integer $0\le i\le s$, 
degree $0$ divisors on $X_0(N)_{/H_s}$
$$
c_i=(h_i)-(0)\hspace{1cm}d_i=(h_i)-(\infty).
$$
For any pair $0\le i,j\le s$ and any $\sigma\in\Gal(H_s/K)$
we define a $p$-adic modular form
$$
F_\sigma^{i,j}=\sum_\beta \langle c_{i}, 
d_{j,\beta}^\sigma\rangle f_\beta  \ \in   
S_2(\Gamma_0(N),\cB_0)\otimes_{\cB_0}\cB
$$
where the sum is over algebra homomorphisms 
$\beta:\Hecke\map{}\cB_0$, $f_\beta$ is the associated normalized
primitive (i.e. new of some level dividing $N$) eigenform,
$\langle\ ,\ \rangle=\langle\ ,\ \rangle_{X_0(N),H_s}$ 
is the $p$-adic height pairing (\ref{global curve})
on degree zero divisors of $X_0(N)_{/H_s}$ (viewed as a pairing on 
$J_0(N)(H_s)$ and extended $\cB$-linearly; by Remark \ref{Theta}
this is \emph{minus} the pairing of Theorem \ref{main result})
and the $\beta$ subscript on $d_j$
indicates projection to the component $J(H_s)_\beta$.
Define a $p$-adic cusp form
$$
F_\sigma= U^2 F_\sigma^{s,s}-U F_\sigma^{s,s-1}-UF_\sigma^{s-1,s}
+F_\sigma^{s-1,s-1}\ \ \in S_2(\Gamma_0(Np),\cB_0)\otimes_{\cB_0}\cB
$$
where $U$ is the Atkin-Lehner $U_p$ defined by
$U(\sum a_m q^m)=\sum a_{mp} q^m$.
For $(m,N)=1$, the $m^\mathrm{th}$ Fourier coefficient of 
$F_\sigma$ is given by the formula (see Proposition \ref{F expansion}) 
\begin{equation}\label{fourier}
a_m(F_\sigma)=\langle c_s, T_{mp^2}(d_s^\sigma)\rangle
-\langle c_s, T_{mp}(d_{s-1}^\sigma)\rangle  
-\langle c_{s-1}, T_{mp}(d_s^\sigma)\rangle
+\langle c_{s-1}, T_m(d_{s-1}^\sigma)\rangle.
\end{equation}
The pairs of divisors occuring in this expression will not be
relatively prime for many values of $m$, but if we 
define divisors
$$
\bh_{s,r}=\Norm_{H_{s+r}/H_s}(h_{s+r})\hspace{1cm}
\bd_{s,r}=\Norm_{H_{s+r}/H_s}(d_{s+r})$$ 
on $X_0(N)$
and write $m=m_0p^r$ with $(m_0,p)=1$, then the Euler system relation
allow us to rewrite (\ref{fourier}) as 
\begin{equation}\label{fourier 2}
a_m(F_\sigma)=\langle c_s, T_{m_0}(\bd_{s,r+2}^\sigma)\rangle
-\langle c_{s-1}, T_{m_0}(\bd_{s,r+1}^\sigma)\rangle.
\end{equation}
The pairs of divisors occuring here are relatively prime:
the geometric points of $T_{m_0}(\bh_{s,r})$ represent 
elliptic curves with CM by an order $\co$ for which 
$\ord_p(\mathrm{cond}(\co))=r+s.$
Working with these divisors
allows us to avoid the ``intersection theory with tangent vectors'' used
by Gross-Zagier to deal with divisors having common support.

In \S \ref{L function} we recall some $p$-adic analytic
results of Hida and Perrin-Riou. In particular, we recall the construction 
of a $p$-adic modular form $G_\sigma\in M_2(\Gamma_0(Np^\infty),\cA)$ 
(a space defined at the beginning of  \S\ref{L function})
for each $\sigma\in\Gal(H_s/K)$, with the property that 
$$
\log_p(\gamma_0)\cdot \cL_{f,1}(\eta)=
\sum_{\sigma\in\Gal(H_s/K)} \eta(\sigma) L_f(G_\sigma)
$$
for every character $\eta$ of $\Gal(H_s/K)$.  Here $L_f$
is a linear functional $$L_f:M_2(\Gamma_0(Np^\infty),\cA)\map{}\cB$$
which plays the Hida-theoretic role of taking the Petersson 
inner product with $f$.  

Perrin-Riou gives an explicit formula for the Fourier coefficient
$a_m(G_\sigma)$ when  $p$ divides $m$ (Proposition \ref{pr calculation}), 
and in
Sections \ref{modular intersections}, \ref{nonsplit}, and 
\ref{p neron} we adapt the methods of Gross-Zagier 
and Perrin-Riou  to
compute (to the extent necessary) the Fourier coefficients of
$F_\sigma$.  More precisely, each Fourier coefficient has a 
decomposition over the finite places of $H_s$,
$a_m(F_\sigma)=\sum_v a_m(F_\sigma)_v$,
arising from the decomposition of the $p$-adic heights in
(\ref{fourier 2}) into local $p$-adic N\'eron symbols on $X_0(N)_{/H_{s,v}}$.
For $v$ lying above a rational prime $\not=p$ which splits in $K$, 
$a_m(F_\sigma)_v=0$ (Proposition \ref{split primes}).  
For $v$ above a nonsplit rational
prime $\ell\not=p$ we derive an explicit formula (Proposition
\ref{nonsplit evaluation}) for  
$\sum_{v\mid \ell}a_m(F_\sigma)_v$ similar to formulas of Gross-Zagier.
For $v\mid p$ we can offer no explicit formula for $a_m(F_\sigma)_v$,
instead we show that the contribution of $a_m(F_\sigma)_p$ to 
$a_m(F_\sigma)$ is killed by the operator $L_f$ (Proposition
\ref{annihilation}).  This is where we must
impose the condition $\epsilon(p)=1$, although 
Proposition \ref{annihilation} should also hold when $\epsilon(p)=-1$.
Comparing these calculations with the Fourier coefficients of $G_\sigma$,
we conclude that 
$$
L_f( U^{2s}(1-U^2)G_{\sigma\kappa})=L_f(F_\sigma),
$$
and Theorems \ref{main result} and \ref{main result II}
follow easily (see \S \ref{fin} for the 
details).


\subsection{Notation and conventions}
\label{notation}


The data $K$, $p$, $N$, $D$, $f$, $\cA_0$, and $\{h_s\}$ are fixed throughout.
We continue to assume, as in \S \ref{the proof}, that $\cA_0$ contains
the Fourier coefficients of all normalized primitive forms of level $N$.
We typically do \emph{not} assume that $D$ is odd or $\not= -3, -4$,
or that $\epsilon(p)=1$,
unless explicitly stated otherwise.
The parity assumption on $D$ 
is needed only for the results of Perrin-Riou cited in
\S \ref{L function}. The condition $\epsilon(p)=1$ and $D\not=-3,-4$
is used in the calculation
of local N\'eron symbols above $p$ in \S \ref{p neron}.

If $M$ is any $\Z$-module of finite type and $r$ is a rational prime
we set $M_r=M\otimes_\Z\Z_r$.  For any integer $n$, any order $\co\subset K$,
 and any proper fractional $\co$-ideal $\fa$,
we denote by $r_\fa(n)$ the number
of proper, integral $\co$-ideals of norm $n$ whose class
in $\Pic(\co)$ agrees with that of $\fa$. The order $\co$
will usually be clear from the context.  If there is any ambiguity we
will write $r_{\fa \co}(n)$.  Since
complex conjugation acts by inversion on $\Pic(\co)$,
$r_\fa(n)=r_{\fa^{-1}}(n)$. We define $R_\fa(n)$ to be the 
number of proper, integral $\co$-ideals of norm $n$ in the $\co$-genus
of $\fa$; that is, such that the image in $\Pic(\co)/\Pic(\co)^2$
agrees with the image of $\fa$.
For any integer $k$ we set
$$\delta(k)=2^{\mathrm{\#\{prime\ divisors\ of\ }(k,D)\}}.$$

The reciprocity map of class field theory is always normalized in
the arithmetic fashion.


\section{Preliminaries on elliptic curves}


\subsection{CM points, Heegner diagrams, and Serre's construction}
\label{CM}

Let $S$ be an $\co_K$-scheme and  let $\co=\co[c]\subset \co_K$ be the
order of conductor $c$.  Assume $(c,N)=1$.
An elliptic curve $E\map{}S$ is said to have CM by $\co$
if there is an embedding $\co\hookrightarrow \End_S(E)$.  We always 
assume that such an embedding is normalized, in the sense that the
action of $\co$ on the pull-back of the tangent sheaf of $E$
by the identity section agrees with the action given by viewing
the structure sheaf of $S$ as a sheaf of $\co$-algebras.
We say that $\co$ is the CM-order of $E$, or that $E$ has CM by exactly
$\co$, if this action does not extend to any larger order.  
A \emph{Heegner diagram} of conductor $c$ over
$S$, $h$, is an $\co$-linear cyclic $N$-isogeny of elliptic curves
$h:E\map{}E'$ over $S$, such that $E$ and $E'$ both have CM by
exactly $\co$.  An isogeny of Heegner diagrams means an isogeny of the 
underlying $\Gamma_0(N)$-structure; i.e. a commutative diagram
$$\xymatrix{
E_0\ar[d]_f\ar[r]^{h_0}&E_0'\ar[d]^{f'}\\
E_1\ar[r]^{h_1}&E_1'
}$$
in which the vertical arrows are isogenies of elliptic curves over $S$,
and the map $f$ takes the scheme-theoretic kernel of $h_0$
isomorphically to the scheme-theoretic kernel of $h_1$.
The degree of such an isogeny is defined to be the degree of $f$, which 
is also the degree of $f'$.
Any Heegner diagram $h$ over $S$ gives rise to an $S$-valued point
of $X_0(N)_{/\Z}$, which we also denote by $h$.  Since
$X_0(N)$ is not a fine moduli space, Heegner diagrams which are not
isomorphic over $S$ may give rise to the same $S$-valued point on $X_0(N)$.

If $E_{/S}$ is an elliptic curve with CM by $\co$ and
$\fa$ is a proper fractional $\co$-ideal, a theorem of Serre 
\cite[Theorem 7.2]{conrad}
guarantees that the functor from $S$-schemes to $\co$-modules
$T\mapsto E(T)\otimes_\co \fa$ is represented
by an elliptic curve which we denote by $E\otimes_\co \fa$.
Define $E^\fa=E\otimes_\co\fa^{-1}$.  
As in \cite[Corollary 7.11]{conrad}, this construction 
extends to Heegner diagrams, and so to any Heegner 
diagram $h:E\map{}E'$ of conductor $c$
over $S$ and any $\fa$ as above,
we obtain  a new Heegner diagram
$$h^\fa:E^\fa\map{} E'^\fa.$$

If $S=\Spec(\C)$ and $E$ is an elliptic 
curve over $S$ with CM by exactly $\co$, then $E(\C)\iso \C/\fb$ for some 
proper fractional $\co$-ideal $\fb$, and we have an analytic isomorphism
$E^\fa(\C)\iso \C/\fa^{-1}\fb.$
By the Main Theorem of Complex Multiplication, the right hand side 
is analytically isomorphic to $E^\sigma(\C)$ for any 
$\sigma\in\mathrm{Aut}(\C/K)$ whose restriction to $H[c]$ (the ring
class field of conductor $c$) agrees with $\fa$
under the Artin map $\Pic(\co)\iso\Gal(H[c]/K)$.
In particular $E$ has a model over $H[c]$,  $E^\sigma$ and $E^\fa$
are isomorphic over $\C$,
and $\Gal(H[c]/K)$ acts transitively on the $\C$-isomorphism classes
of elliptic curves over $H[c]$ with CM by exactly $\co$.
Similarly all Heegner diagrams over $\C$ of conductor $c$
have models over the ring class field of conductor $c$.  If $h$ is
a Heegner diagram of conductor $c$ defined over $H[c]$, we define the
\emph{orientation} of $h$ to be the annihilator in $\co$ of the
kernel of $h:E(\C)\map{}E'(\C)$.
It is an ideal $\mathcal{N}$ of $\co$ such that $\co/\mathcal{N}\iso\Z/N\Z$.
Then $\Gal(H[c]/K)$ acts transitively on the $\C$-isomorphism classes
of conductor $c$ Heegner points with a given orientation.


\subsection{Hecke action on CM points}
\label{Hecke action}


Let $\mathfrak{L}$ denote the set of lattices in $K$, modulo multiplication
by $K^\times$.  The $K^\times$-class of a lattice $L$ will be denoted $[L]$.
For any $[L]\in\mathfrak{L}$ we define the 
conductor of $[L]$ to be the conductor of the left order of $L$; that is,
the conductor of the order 
$\co(L)=\{\alpha\in K\mid \alpha L\subset L\}$.  
Every lattice of conductor $c$ is represented uniquely (up to $K^\times$
action) by an element of $\Pic(\co)$, where $\co\subset K$ is the 
order of conductor $c$.

We have the usual action of Hecke operators $\{T_m\}$
on formal sums of classes in $\mathfrak{L}$, which we wish to make
explicit.  The following lemma is an elementary exercise.  

\begin{Lem}\label{counting lattices}
Suppose we are given orders $\co$ and $\co'$ of $K$
of conductors $c$ and $d$, respectively,
and a proper fractional $\co$-ideal $\mathfrak{c}$ 
(resp. $\co'$-ideal $\mathfrak{d}$).
If $c|d$ then the multiplicity of $[\mathfrak{c}]$ in the formal sum
$T_m[\mathfrak{d}]$ is equal to 
$r_{\mathfrak{c}\mathfrak{d}^{-1}\co}(mc/d)$.
If instead $d|c$, then the multiplicity of $[\mathfrak{c}]$ in 
$T_m[\mathfrak{d}]$ is given by 
$|\co'^\times||\co^\times|^{-1}
r_{\mathfrak{c}\mathfrak{d}^{-1}\co'}(md/c).$
\end{Lem}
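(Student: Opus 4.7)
The plan is to directly unpack the definition of the Hecke operator as a sum over index-$m$ sublattices and, in each of the two cases, set up a (possibly many-to-one) correspondence between the sublattices counted and the integral ideals appearing on the right side. Recall that $T_m$ acts on formal sums of classes in $\mathfrak{L}$ by
\[
T_m[L] = \sum_{L' \subset L,\ [L:L']=m} [L'],
\]
so the multiplicity in question is the number of sublattices $L' \subset \mathfrak{d}$ of index $m$ with $[L'] = [\mathfrak{c}]$ in $\mathfrak{L}$.

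For the case $c \mid d$, so that $\co' \subset \co$, I would send each such $L'$ to the ideal $\mathfrak{b} := L' \cdot \mathfrak{d}^{-1}$, with $\mathfrak{d}^{-1}$ the inverse of $\mathfrak{d}$ as a proper $\co'$-ideal. The inclusion $L' \subset \mathfrak{d}$ gives $\mathfrak{b} \subset \co' \subset \co$, so $\mathfrak{b}$ is integral, and a short check shows that the left order of $\mathfrak{b}$ matches that of $\mathfrak{b}\mathfrak{d} = L'$, namely $\co$; thus $\mathfrak{b}$ is a proper integral $\co$-ideal whose class in $\Pic(\co)$ is $[\mathfrak{c}\mathfrak{d}^{-1}]$ by construction. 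The inverse map $\mathfrak{b} \mapsto \mathfrak{b}\mathfrak{d}$ recovers $L'$ (using $L'\co' = L'$), so this is a bijection. It remains to translate the index condition $[\mathfrak{d}:L']=m$ into the norm condition $N_\co(\mathfrak{b}) = mc/d$, which I would verify prime by prime using the relations $[\co_K:\co] = c$ and $[\co_K:\co'] = d$.

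For the case $d \mid c$, so that $\co \subset \co'$, the same idea requires a modification: the sublattice $L'$ has left order $\co$ strictly smaller than $\co'$, so $L'$ is not $\co'$-stable. I would define $\mathfrak{b} := (L'\co')\mathfrak{d}^{-1}$, a proper integral $\co'$-ideal of norm $md/c$ whose class in $\Pic(\co')$ is the image of $[\mathfrak{c}\mathfrak{d}^{-1}]$ under the natural map $\Pic(\co)\to\Pic(\co')$. The new feature is that the assignment $L' \mapsto \mathfrak{b}$ is no longer injective. Writing $L' = \alpha\mathfrak{c}$, two scalars $\alpha,\alpha' \in K^\times$ give the same $\mathfrak{b}$ iff $\alpha\mathfrak{c}\co' = \alpha'\mathfrak{c}\co'$, i.e.\ iff $\alpha/\alpha' \in \co'^\times$, whereas they produce the same $L'$ only when $\alpha/\alpha' \in \co^\times$. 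Since $\co^\times \subset \co'^\times$, every $\mathfrak{b}$ in the correct class and norm has fiber of size exactly $|\co'^\times|/|\co^\times|$, which accounts for the extra factor in the formula.

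I expect the main obstacle to be the index-to-norm translation when the two orders have different conductors, since norms of lattices are taken with respect to a reference order and the objects $\mathfrak{b}$, $\mathfrak{d}$, and $L'$ live most naturally over three different orders. The cleanest way to handle this is by localization at each rational prime $\ell$: for $\ell \nmid cd$ the orders $\co_\ell$, $\co'_\ell$, and $(\co_K)_\ell$ all coincide with $\co_{K,\ell}$ and the claim reduces to the classical formula for the Hecke action on $\co_K$-ideals, while at primes $\ell \mid cd$ the local orders are explicit rank-$2$ subrings of $\co_{K,\ell}$ for which sublattices of a given index can be enumerated by hand.
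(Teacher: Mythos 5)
Your handling of the case $d\mid c$ is essentially sound: the map $L'\mapsto(L'\co')\mathfrak{d}^{-1}$ lands in proper integral $\co'$-ideals of the right class and norm $md/c$, and the $|\co'^\times|/|\co^\times|$-to-one fiber count is correct.

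In the case $c\mid d$, however, the deferred norm computation would not have come out as hoped, and this exposes a real gap. With $\co'\subset\co$ and $L'\subset\mathfrak{d}$ of index $m$, the ideal $\mathfrak{b}=L'\mathfrak{d}^{-1}$ satisfies $N_\co(\mathfrak{b})=md/c$, not $mc/d$. To see this concretely, take $\co=\co_K$ (so $c=1$), $\co'=\Z+p\co_K$ with $p$ split (so $d=p$), $\mathfrak{c}=\co_K$, $\mathfrak{d}=\co'$. A sublattice $L'=\alpha\co_K\subset\co'$ forces $\alpha\co_K\subset p\co_K$ (since $\co'/p\co_K$ is the diagonal line in $\co_K/p\co_K\cong\mathbf{F}_p\times\mathbf{F}_p$, and the only ideal submodule it contains is $0$), and $[\co':\alpha\co_K]=m$ forces $N_{\co_K}(\alpha\co_K)=mp$; meanwhile $L'\mathfrak{d}^{-1}=\alpha\co_K\cdot\co'=\alpha\co_K$, which indeed has norm $mp=md/c$ rather than $m/p=mc/d$. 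A related symptom is that your inverse $\mathfrak{b}\mapsto\mathfrak{b}\mathfrak{d}$ is not defined on all proper integral $\co$-ideals of the target class and norm: for $\mathfrak{b}\mathfrak{d}$ to lie inside $\mathfrak{d}$ one needs $\mathfrak{b}\subset\co(\mathfrak{d})=\co'$, a nontrivial constraint on $\mathfrak{b}$ that your map does in fact impose on its image but that you have not accounted for. Both problems disappear if you rescale: take $\mathfrak{b}=\frac{c}{d}\,L'\mathfrak{d}^{-1}$. Localizing at a prime $\ell\mid d$ and writing $\mathfrak{b}_\ell=\beta\co_\ell$, the conditions $v_\ell(\beta)\ge v_\ell(d)-v_\ell(c)$ coming from $\beta\co_\ell\subset\co'_\ell$ are precisely what is needed for $(c/d)\beta\co_\ell\subset\co_\ell$, so the containment constraint is absorbed, and the norm becomes $(md/c)\cdot(c/d)^2=mc/d$ as the lemma requires. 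With that correction the strategy goes through.
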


\begin{Lem}[Euler system relations]
With notation as in the introduction and $2u=|\co_K^\times|$,
$$
T_{p^{r}}(h_s)=\Norm_{H_{s+r}/H_s}(h_{s+r})+T_{p^{r-1}}(h_{s-1})$$
if $r,s>0$, and 
$$
T_p(h_0)=
\left\{\begin{array}{ll}
u\cdot \Norm_{H_1/H_0}(h_1)+ (\sigma_p+\sigma_p^*)h_0
& \mathrm{if\ } \epsilon(p)=1\\
u\cdot \Norm_{H_1/H_0}(h_1)& \mathrm{if\ } \epsilon(p)=-1.
\end{array}\right.
$$
\end{Lem}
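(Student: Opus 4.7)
The plan is to use the analytic (lattice-theoretic) description of the Heegner diagrams and read off the Hecke action from Lemma \ref{counting lattices}, organizing the resulting terms according to the change in CM conductor. Throughout, since $(p,N)=1$, every cyclic subgroup $\Psi$ of $p$-power order in $E_s$ meets $\ker h_s$ trivially, so $(E_s/\Psi, E_s'/h_s(\Psi))$ is again a Heegner diagram and the $\Gamma_0(N)$-structure rides along automatically. The central geometric observation will be that for $s\geq 1$ the canonical isogeny $\pi_s\colon E_s\map{}E_{s-1}$ has kernel equal to the unique $\co_s$-stable cyclic subgroup of $E_s$ of order $p$; consequently every cyclic subgroup $\Psi$ of order $p^r$ satisfies exactly one of $\Psi\cap\ker\pi_s=0$ or $\Psi\supset\ker\pi_s$.

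Assume first that $r,s>0$. The subgroups $\Psi$ containing $\ker\pi_s$ are precisely the preimages under $\pi_s$ of cyclic subgroups of $E_{s-1}$ of order $p^{r-1}$, and the corresponding quotient diagrams are exactly the terms appearing in $T_{p^{r-1}}(h_{s-1})$. For the subgroups with $\Psi\cap\ker\pi_s=0$ the quotient $E_s/\Psi$ has CM by exactly $\co_{s+r}$, and I would invoke the $d\mid c$ case of Lemma \ref{counting lattices} with $d=p^s$, $c=p^{s+r}$, $m=p^r$: the unit factor $|\co_s^\times|/|\co_{s+r}^\times|$ equals $1$, and $r_{\fc\fa_s^{-1}\co_{s+r}}(1)=1$ precisely when $[\fc]$ lies in the fiber of $\Pic(\co_{s+r})\to\Pic(\co_s)$ over $[\fa_s]$. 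By the Main Theorem of Complex Multiplication this fiber is identified via the Artin map with $\Gal(H_{s+r}/H_s)$ acting on $h_{s+r}$, yielding the $\Norm_{H_{s+r}/H_s}(h_{s+r})$ contribution.

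The case $s=0$ proceeds analogously, with $\co_K$-stable cyclic subgroups of $E_0[p]$ playing the role of $\ker\pi_s$. When $\epsilon(p)=1$ the two $\co_K$-stable subgroups are $E_0[\mathfrak{p}]$ and $E_0[\bar{\mathfrak{p}}]$; by Serre's construction their quotients are $E_0^{\mathfrak{p}}$ and $E_0^{\bar{\mathfrak{p}}}$, which the Main Theorem of Complex Multiplication identifies with the Heegner points $\sigma_p h_0$ and $\sigma_p^* h_0$. When $\epsilon(p)=-1$, $\co_K/p$ is a field and no $\co_K$-stable cyclic order-$p$ subgroup exists, so all $p+1$ subgroups of $E_0[p]$ contribute to the conductor-$p$ part. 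In either case Lemma \ref{counting lattices} applied with $d=1$, $c=p$, $m=p$ produces each conductor-$p$ class in the fiber over $[\co_K]$ with multiplicity $|\co_K^\times|/|\co_1^\times|=u$, summing to $u\cdot\Norm_{H_1/H_0}(h_1)$. The main bookkeeping obstacle I anticipate is precisely this unit factor at conductor $0$, which is tracked cleanly by the second case of Lemma \ref{counting lattices}; the orientation is never in question because $(p,N)=1$ forces every $p$-power quotient to transport the cyclic $N$-isogeny rigidly.
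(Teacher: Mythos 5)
Your overall strategy---partitioning the order-$p^r$ subgroups of $E_s$ according to whether they contain $\ker\pi_s$---is sound, and is in fact somewhat more transparent than the paper's own sketch, which shows only that each norm-term class occurs with the correct net multiplicity and then closes by counting degrees. (Your version establishes the equality coefficient by coefficient and so avoids the implicit effectivity issue in the degree argument.) But as written there is a genuine error running through it: you consistently restrict to \emph{cyclic} subgroups, whereas the Hecke operator $T_{p^r}$, in the convention underlying Lemma \ref{counting lattices}, sums over \emph{all} subgroups of order $p^r$ (equivalently, all index-$p^r$ sublattices). That is the convention that gives $\deg T_{p^r}=\sigma(p^r)$; under the cyclic-only reading one would have $\deg T_{p^r}=p^{r-1}(p+1)$, and then the identity
$$
T_{p^{r}}(h_s)=\Norm_{H_{s+r}/H_s}(h_{s+r})+T_{p^{r-1}}(h_{s-1})
$$
already fails on degree grounds for $r\geq 2$, since $\deg\Norm_{H_{s+r}/H_s}(h_{s+r})=p^r$ and $p^{r-1}(p+1)-p^{r-2}(p+1)\neq p^r$.

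The good news is that your dichotomy extends without change to all subgroups: if $\Psi\subset E_s$ has order $p^r$ and is not cyclic, then $\Psi[p]=E_s[p]\supset\ker\pi_s$, so $\Psi\supset\ker\pi_s$ automatically, and the subgroups with $\Psi\cap\ker\pi_s=0$ are exactly the cyclic ones you analyze. The problem sits on the other side of the partition. The bijection $\Psi\mapsto\pi_s(\Psi)=\Psi/\ker\pi_s$ matches the subgroups containing $\ker\pi_s$ with \emph{all} order-$p^{r-1}$ subgroups of $E_{s-1}$, cyclic or not; for $r\geq 3$ the non-cyclic ones occur (for instance $E_{s-1}[p]$, whose preimage $\pi_s^{-1}(E_{s-1}[p])$ is a genuine term of $T_{p^3}(h_s)$), and your sentence ``are precisely the preimages under $\pi_s$ of cyclic subgroups of $E_{s-1}$'' would drop those terms and under-count $T_{p^{r-1}}(h_{s-1})$. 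Delete ``cyclic'' there and the argument is correct. Two smaller points: when invoking the $d\mid c$ case of Lemma \ref{counting lattices} with $d=p^s$, $c=p^{s+r}$, the residue count is taken in $\Pic(\co_s)$ (the smaller-conductor order), not $\Pic(\co_{s+r})$---that is exactly what yields the fiber condition you then use; and for $s=0$ your unit bookkeeping $|\co_K^\times|/|\co_1^\times|=u$ is precisely what the paper's formula requires.
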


\begin{proof}
We give a brief sketch of the proof of the first relation.  Let
$\mathfrak{d}$ be a proper $\co_{s+r}$-ideal such that
$\C/\mathfrak{d}\iso E_{s+r}(\C)$, and for any $0\le t\le s+r$,
set $\mathfrak{d}_t=\mathfrak{d}\co_t$, so that 
$E_t(\C)\iso\C/\mathfrak{d}_t$.  By the theory of complex multiplication,
the complex elliptic curves underlying the $\Gamma_0(N)$-structures appearing
in the divisor  $\Norm_{H_{s+r}/H_s}(h_{s+r})$ are exactly the complex 
tori of the form $\C/\mathfrak{d}'$ where $\mathfrak{d}'$ is a 
proper $\co_{s+t}$-ideal satisfying $\mathfrak{d}'\co_s=\mathfrak{d}_s$.
Using Lemma \ref{counting lattices}, such a $\mathfrak{d}'$
occurs exactly once in the formal sum
$T_{p^r}[\mathfrak{d}_s]$, and does not occur in 
$T_{p^{r-1}}[\mathfrak{d}_{s-1}]$.  As the formal sum of lattices
$T_{p^r}[\mathfrak{d}_s]-T_{p^{r-1}}[\mathfrak{d}_{s-1}]$ has degree
$p^r$, it must be exactly the formal sum of $[\mathfrak{d}']$
with $\mathfrak{d}'$ as above.
\end{proof}


\subsection{The Serre-Tate theorem}


We recall the Serre-Tate theory of deformations of elliptic curves.
More detail can be found in  \cite[\S 3]{conrad} and \cite[Chapter 6]{Goren}. 
Let $k$ be a field of nonzero
characteristic $\ell$ and define $\mathcal{C}_k$ to be the 
category of local Artinian 
algebras $(R,\mathfrak{m}_R)$ with residue field $k$,
together with a chosen isomorphism
$R/\mathfrak{m}_R\iso k$, with morphisms given by local algebra
maps inducing the identity on $k$.  Given an elliptic curve 
$E\map{}\Spec(k)$, and some $R\in\mathcal{C}_k$, 
we define a \emph{deformation} of 
$E$ to $R$ to be an elliptic curve $E_R\map{}\Spec(R)$ together 
with an isomorphism between the closed fiber of $E_R$ and $E$.  
Similarly, we may define the notion of
a deformation of the $\ell$-divisible group of an elliptic curve over $k$. 
For $(R,\mathfrak{m}_R)$ an object of $\mathcal{C}_k$,
let $\mathrm{DEF}_{R}$ denote the
category of pairs $(E,G)$ where $E$ is an elliptic curve over $k$
and $G$ is a deformation to $R$ of the $\ell$-divisible group of $E$.
A morphism from $(E,G)$ to $(E',G')$ is a pair $(f,\phi)$
where $f:E\map{}E'$ is a morphism of elliptic curves over $\Spec(k)$
and  $\phi:G\map{}G'$ is a map of $\ell$-divisible groups such that the
base change of $\phi$ to the closed fiber is the map on $\ell$-divisible
groups over $\Spec(k)$ induced by $f$.

\begin{Thm}[Serre-Tate]
For any obect $(R,\mathfrak{m}_R)$ of $\mathcal{C}_k$, the functor 
from elliptic curves over $R$ to $\mathrm{DEF}_{R}$ which 
sends $E$ to the pair $(E\times_R k, E[\ell^\infty])$
is an equivalence of categories, where $E[\ell^\infty]$ 
denotes the $\ell$-divisible group of $E$.
\end{Thm}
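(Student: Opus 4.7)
The plan is to proceed by d\'evissage along small surjections in $\mathcal{C}_k$. Every morphism in $\mathcal{C}_k$ factors as a finite composition of small surjections $R'\twoheadrightarrow R$ with kernel $I$ satisfying $I\cdot\mathfrak{m}_{R'}=0$, so that $I$ is naturally a $k$-vector space. The case $R=k$ is trivial, so a standard induction on the length of $R$ reduces the whole statement to showing, for each such small surjection, that the functor induces a bijection on isomorphism classes of deformations to $R'$ of a given object over $R$ and a bijection on lifts of a given morphism of reductions. The entire theorem is thus reduced to comparing two square-zero deformation problems over the vector space $I$.

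For full faithfulness, fix elliptic curves $E_1',E_2'$ over $R'$ with reductions $E_1,E_2$ and a morphism $f:E_1\map{}E_2$. The set of lifts of $f$ to $R'$ is either empty or a torsor under a $k$-vector space of the form $V_1\otimes_k I$, and the set of lifts of the induced map $E_1[\ell^\infty]\map{}E_2[\ell^\infty]$ to $R'$ is either empty or a torsor under an analogous group $V_2\otimes_k I$. One must identify both the obstruction theories and the torsor-acting groups. The key input is Drinfeld's rigidity lemma: a homomorphism of $\ell$-divisible groups over $R'$ which vanishes modulo $I$ is annihilated by a bounded power of $\ell$. Combined with the canonical identification of the cotangent space of $E$ with the cotangent space of $E[\ell^\infty]$ (equivalently, with the Lie algebra of its formal group), this forces $V_1\iso V_2$ canonically and compatibly with the torsor structures, since both are controlled by the same first-order infinitesimal calculation on the tangent-cotangent exact sequence tensored with $I$.

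For essential surjectivity, begin with a deformation $G'/R'$ of $E[\ell^\infty]$ and pick any deformation $E''/R'$ of $E$ (available because elliptic curves have smooth deformation theory). The two deformations $E''[\ell^\infty]$ and $G'$ of $E[\ell^\infty]$ differ by a class in $V_2\otimes_k I$, which via the identification above corresponds to a class in the torsor of deformations of $E''$ with unchanged reduction; twisting $E''$ by this class produces an elliptic curve $E'/R'$ with $E'[\ell^\infty]\iso G'$, and functoriality in $f$ then follows from full faithfulness. The hard part throughout is the torsor identification of $V_1$ with $V_2$: this is the crystalline Dieudonn\'e compatibility which, most cleanly, comes from Grothendieck-Messing theory (deformations of a $p$-divisible group are controlled by lifts of the Hodge filtration on its crystalline Dieudonn\'e module, and for $E[\ell^\infty]$ this module is canonically $H^1_{\mathrm{dR}}(E/k)$). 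It is here, and essentially only here, that the height-$2$ nature of $E[\ell^\infty]$ (i.e.\ the fact that $E$ is a curve rather than a higher-dimensional abelian variety) is used.
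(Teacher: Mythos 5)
The paper itself offers no proof: it states the result and refers to \cite[\S 3]{conrad} and \cite[Chapter 6]{Goren}, both of which follow the Drinfeld--Katz argument. Your proposal instead invokes Grothendieck--Messing theory and crystalline Dieudonn\'e modules to compare the two deformation problems after reducing to square-zero thickenings. That route can be made to work, but it is genuinely different from, and considerably heavier than, what those references do.

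Two substantive issues. First, you claim that ``it is here, and essentially only here, that the height-$2$ nature of $E[\ell^\infty]$ (i.e.\ the fact that $E$ is a curve rather than a higher-dimensional abelian variety) is used.'' This is wrong: the Serre--Tate theorem holds for abelian schemes of arbitrary dimension over an Artinian local ring with residue field of characteristic $\ell$, and both the Drinfeld proof and the Grothendieck--Messing proof work verbatim in that generality. Nothing about your argument actually depends on $E$ being a curve. (The only place dimension $1$ gives a genuine shortcut is in the throwaway remark that elliptic curves ``have smooth deformation theory'' -- there one has $H^2(E,\mathcal O_E)=0$ for dimension reasons -- but the unobstructedness of deformations of abelian schemes holds in every dimension and is in any case not where you claim the curve hypothesis enters.)

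Second, you conflate two logically independent proof strategies. Drinfeld's rigidity lemma (that a homomorphism of $\ell$-divisible groups or abelian schemes over $R'$ that dies modulo a nilpotent ideal is annihilated by an explicit power of $\ell$) is the engine of the \emph{elementary} proof: it lets one construct lifts ``up to isogeny'' directly, without ever computing tangent spaces or invoking crystals, and it is this argument that Conrad and Goren reproduce. Once you commit to Grothendieck--Messing, you do not need Drinfeld's lemma at all: full faithfulness and essential surjectivity both come out of the identification of filtered Dieudonn\'e crystals with crystalline cohomology. As written, your proof asserts Drinfeld's lemma as the ``key input'' to the torsor identification, but then says that identification ``most cleanly, comes from Grothendieck--Messing theory.'' Choose one. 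Either write the Drinfeld argument in full (which is short and self-contained), or run the crystalline argument, in which case Drinfeld's lemma should not appear. In the form presented, the d\'evissage outline never actually explains how the class in $V_2\otimes_k I$ governing the $\ell$-divisible group is matched to a class in $V_1\otimes_k I$ governing the elliptic curve: ``both are controlled by the same first-order infinitesimal calculation'' is the content of the theorem, not a proof of it.
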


Now assume that $k$ is algebraically closed
and fix an ordinary elliptic curve $E$ over $k$.  We have
$E[\ell^\infty]\iso \mu_{\ell^\infty}\oplus \Q_\ell/\Z_\ell$
as $\ell$-divisible groups over $k$.
For any $R\in\mathcal{C}_R$ there is a distinguished deformation 
of the $\ell$-divisible group of $E$
to  an $\ell$-divisible group over $R$, namely the deformation
$\mu_{\ell^\infty}\oplus \Q_\ell/\Z_\ell$.
Applying the Serre-Tate theorem,
we obtain an elliptic curve over $R$ called the \emph{Serre-Tate canonical 
lift} of $E$ to $R$.

As explained in \cite[\S 3]{conrad}, a theorem of Grothendieck allows 
one to replace ``local Artinian'' by ``complete local Noetherian'' in the 
definition of  $\mathcal{C}_k$,
and the discussion above holds verbatim.


\section{The $p$-adic $L$-function}
\label{L function}


In this section we quickly recall the essential properties of 
Hida's $p$-adic $L$-function $\cL_f$ and Perrin-Riou's calculation
of its linear term.  
We refer the reader to \cite{Hida, nekovar, pr1} for more detailed treatments.
Assume that $D$ is odd. Recall that $\cA_0\subset\Q^\alg$ 
is the ring of integers of
a number field with closure $\cA$ in $\Q_p^\alg$, $\cB$ is the fraction
field of $\cA$, and $\alpha\in\cA^\times$ is the unit root of 
$X^2-a_p(f)X+p$.

Set 
$$
M_2(\Gamma_0(Np^k),\cA)=M_2(\Gamma_0(Np^k),\cA_0)\otimes_{\cA_0}\cA
$$
and let $M_2(\Gamma_0(Np^\infty),\cA)$ be the completion of
$\cup_k M_2(\Gamma_0(Np^k),\cA)$ with respect to the $p$-adic
supremum norm on Fourier coefficients.
To any $s\ge0$, $\sigma\in\Gal(H_s/K)$, and integer $C$ prime to
$Dp$, Perrin-Riou \cite[\S2.2.3]{pr1}   associates 
a measure $\Phi_\sigma^C$ on $\Z_p^\times$ with 
values in the space $M_2(\Gamma_0(Np^\infty),\cA)$.
These are compatible as $s$ and $\sigma$ vary in the following sense:
there is a measure $\Phi^C$ on $\Gal(H_\infty/K)\times\Z_p^\times$
with values in $M_2(\Gamma_0(Np^\infty),\cA)$
such that for any continuous characters 
$$
\eta:\Gal(H_\infty/K)\map{}\Q_p^{\alg,\times}
\hspace{1cm}
\psi:\Z_p^\times\map{}\Q_p^{\alg,\times}
$$
such that $\eta$ factors through $\Gal(H_s/K)$ we have the relation
$$\int_{\Gal(H_\infty/K)\times\Z_p^\times}\eta\psi\ d\Phi^C
=\sum_{\sigma\in\Gal(H_s/K)}\eta(\sigma)\int_{\Z_p^\times}
\psi \ d\Phi^C_\sigma
$$
in $M_2(\Gamma_0(Np^\infty),\cA)\otimes_\cA\Q_p^\alg$.   

Use the notation $\tilde{T}_\ell$ to denote Hecke operators
acting on modular forms of level $\Gamma_0(Np^\infty)$, 
to distinguish them from the operators on level
$\Gamma_0(N)$. Define Hida's ordinary projector \cite[\S 7.2]{Hida book}
$$e^\ord:M_2(\Gamma_0(Np^\infty),\cA)\map{}M_2(\Gamma_0(Np),\cA)$$
by $e^\ord(g)=\lim_{k\to\infty}U^{k!}(g)$, where $U=\tilde{T}_p$ is given
by $U(\sum a_nq^n)=\sum a_{np}q^n$ and the limit is with respect to 
the supremum norm on Fourier coefficients.
Define modular forms of level $\Gamma_0(Np)$ by 
$$f_0(z)=f(z)-\frac{p}{\alpha}f(pz)
\hspace{1cm}
f_1(z)=f(z)-\alpha f(pz).
$$
These are eigenforms for \emph{all} Hecke operators $\tilde{T}_\ell$, and
satisfy $a_\ell(f_0)=a_\ell(f)=a_\ell(f_1)$ if $\ell\not=p$, and
$a_p(f_0)=\alpha$, $a_p(f_1)=p/\alpha$.
The $\cB$-algebra
generated by the Hecke operators $\tilde{T}_\ell$ with $(\ell,Np)=1$
acting on $M_2(\Gamma_0(Np),\cA)\otimes_\cA\cB$ is semi-simple, and so
contains an idempotent $e_f$ such that
$e_f\circ \tilde{T}_\ell= a_\ell(f)e_f$. By \cite[\S4]{Hida}
there is an idempotent $e_{f_0}$ in the algebra generated by \emph{all} Hecke
operators $\tilde{T}_\ell$,
such that $e_{f_0}\circ \tilde{T}_\ell=a_\ell(f_0)e_{f_0}$ for
every $\ell$.  As operators on modular forms, 
$e_{f_0}=e_{f_0}e_f$.
Define a linear functional 
$$l_f:M_2(\Gamma_0(Np^\infty),\cA)\otimes_\cA\cB\map{}\cB$$
by $l_f(g)=a_1( e_{f_0}e^\ord g)$, and set 
$L_f=(1-p/\alpha^2)(1-1/\alpha^2)l_f$ 
(this is denoted $\tilde{L}_{f_0}$ in \cite{pr1}).

\begin{Lem}\label{linear lemma}
The linear functional 
$L_f:M_2(\Gamma_0(Np^\infty),\cA)\otimes_\cA\cB\map{}\cB$ satisfies
\begin{enumerate}
\item $L_f=L_f\circ e^\ord$,
\item $L_f(f)=1-1/\alpha^2$,
\item if $g\in M_2(\Gamma_0(Np^\infty),\cA)$
is such that $a_m(g)=0$ for all $(m,N)=1$, then $L_f(g)=0$,
\item for any positive integer $m$, 
$L_f\circ \tilde{T}_m=a_m(f_0) L_f$.  In particular,
$L_f\circ U=\alpha L_f$.
\end{enumerate}
\end{Lem}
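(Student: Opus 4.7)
The plan is to dispatch the four properties in the order (1), (4), (2), (3), treating (3) as the main substance. Two structural facts will be used throughout: first, that $e^\ord$ commutes with every Hecke operator on $M_2(\Gamma_0(Np^\infty),\cA)$, since it is a $p$-adic limit of powers of $U$ and the various $\tilde T_\ell$ (including the $U_\ell$'s at primes dividing $Np$) pairwise commute; second, that the eigenform property $e_{f_0}\tilde T_\ell = a_\ell(f_0) e_{f_0}$ extends multiplicatively to $e_{f_0}\tilde T_m = a_m(f_0) e_{f_0}$ for every positive integer $m$ via the standard Hecke relations.

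Property (1) is immediate from $(e^\ord)^2 = e^\ord$. Property (4) follows in one line:
\[
L_f(\tilde T_m g)\;\propto\; a_1(e_{f_0} e^\ord \tilde T_m g)\;=\; a_1(e_{f_0} \tilde T_m e^\ord g)\;=\; a_m(f_0)\,a_1(e_{f_0} e^\ord g)\;\propto\; a_m(f_0)\, L_f(g),
\]
and $a_p(f_0) = \alpha$ specializes to $L_f\circ U = \alpha L_f$.

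For (2), I would decompose $f$ in the basis $\{f_0,f_1\}$ of the $f$-isotypic subspace of $M_2(\Gamma_0(Np),\cB)$: solving $f_0 - f_1 = (\alpha - p/\alpha) f|_p$ and $\alpha f_0 - (p/\alpha) f_1 = (\alpha - p/\alpha) f$ gives $f = \alpha^2(\alpha^2-p)^{-1} f_0 - p(\alpha^2 - p)^{-1} f_1$. Since $Uf_0 = \alpha f_0$ has unit eigenvalue whereas $Uf_1 = (p/\alpha)f_1$ does not, one gets $e^\ord f_0 = f_0$ and $e^\ord f_1 = 0$. Applying $e_{f_0}$ (which fixes $f_0$) and reading off the $q$-coefficient yields $l_f(f) = \alpha^2/(\alpha^2 - p) = 1/(1 - p/\alpha^2)$, so $L_f(f) = (1-p/\alpha^2)(1-1/\alpha^2)\cdot l_f(f) = 1 - 1/\alpha^2$.

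The main obstacle is (3). By (1), I may assume $g = e^\ord g\in M_2(\Gamma_0(Np),\cA)\otimes\cB$, and the hypothesis $a_m(g) = 0$ for $(m,N) = 1$ persists under $e^\ord$, because $a_m(U^{k!} g) = a_{mp^{k!}}(g) = 0$ whenever $(m,N) = 1$ (as $(mp^{k!},N)=1$). Now $e_{f_0} g = c f_0$ for some $c\in\cB$, with $l_f(g) = c$, so the task is to show $c = 0$. Factor $e_{f_0} = e_{f_0} e_f$, where $e_f$ projects onto the two-dimensional $f$-isotypic subspace of $M_2(\Gamma_0(Np),\cA)\otimes\cB$ spanned by $\{f_0, f_1\}$. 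Writing $e_f g = u f_0 + v f_1$, the fact that $e_{f_0}$ annihilates $f_1$ (whose $U$-eigenvalue $p/\alpha$ differs from $\alpha$, since $\alpha^2\neq p$ by ordinariness) gives $c = u$, and the pair $(u,v)$ is recovered from the invertible $2\times 2$ system
\[
a_1(e_f g) = u + v,\qquad a_p(e_f g) = u\alpha + v(p/\alpha).
\]
Finally, $e_f$ lies in the finite-dimensional Hecke algebra on $M_2(\Gamma_0(Np),\cA)\otimes\cB$ generated by $\{\tilde T_n : (n,Np) = 1\}$, so we may write $e_f = \sum_i c_i\tilde T_{n_i}$ with $(n_i,Np) = 1$; hence
\[
a_1(e_f g) = \sum_i c_i a_{n_i}(g) = 0,\qquad a_p(e_f g) = a_1(Ue_f g) = \sum_i c_i a_{n_i p}(g) = 0,
\]
since both $n_i$ and $n_i p$ are coprime to $N$. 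Therefore $u = 0$, and $L_f(g) = 0$.
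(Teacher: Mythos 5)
Your proof is correct, and it follows essentially the same route as the paper: use idempotency of $e^{\ord}$ for (a), the eigenform relation $e_{f_0}\tilde T_m = a_m(f_0)e_{f_0}$ for (d), the decomposition of $f$ in the basis $\{f_0,f_1\}$ for (b), and reduce (c) to the assertion that a form in the span of $f_0,f_1$ with vanishing $a_1$ and $a_p$ coefficients must be zero (since $\alpha^2\neq p$). The only difference is expository: where the paper simply states that the vanishing hypothesis ``persists under $e_f e^{\ord}$,'' you spell out why, by writing $e_f$ as a $\cB$-linear combination of operators $\tilde T_{n_i}$ with $(n_i,Np)=1$ so that $a_1(e_f g)=\sum_i c_i\,a_{n_i}(g)=0$ and $a_p(e_f g)=\sum_i c_i\,a_{n_ip}(g)=0$; this makes explicit the detail the paper leaves to the reader.
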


\begin{proof}
The first claim is trivial, since $e^\ord\circ e^\ord=e^\ord$.  
The second follows from $l_f(f_0)=1$,
$l_f(f_1)=0$.  If $g$ satisfies $a_m(g)=0$ for all $(m,N)=1$,
then so does $e_f e^\ord g$, 
so we may assume that $g$ has level $\Gamma_0(Np)$ and that 
$\tilde{T}_\ell g=a_\ell(f)g$ for $(\ell,Np)=1$.
By Atkin-Lehner theory, $g$ is a linear combination of $f_0$ and $f_1$.
Since $a_1(g)=0$, $g$ must be a scalar multiple of $f_0-f_1$.
But $a_p(f_0-f_1)\not=0$, so this scalar must be $0$.
The final claim follows from 
$e_{f_0}\circ\tilde{T}_m=a_m(f_0) e_{f_0}.$
\end{proof}

\begin{Rem}
Contrary to the proof of \cite[Proposition II.5.10]{nekovar},
the weaker hypothesis that $a_m(g)=0$ for all $(m,Np)=1$ is not 
sufficient to conclude that  $L_f(g)=0$.
The modular form $g=f_0-f_1$ provides a counterexample.  
\end{Rem}

Whenever $\psi$ is a continuous character of $\Gamma$, we extend
$\psi$ to a character of $\Z_p^\times$ using the usual projection
$\langle\ \rangle:\Z_p^\times\map{}\Gamma$.
We now define the $p$-adic $L$-function $\cL_f$ of the introduction
(compare \cite[D\'efinition 2.4]{pr1}, but note that Perrin-Riou's
$\psi(C)=\psi(\mathrm{Frob}_{C\co_K})$ is our
$\psi(C)^2$).
For any  continuous character $\eta\cdot \psi$ of 
$\Gal(H_\infty/K)\times\Gamma$, set
$$
\cL_f(\eta,\psi)=
\frac{1}{1-C\epsilon(C)\psi(C)^{-2}}\cdot 
L_f\left(\int_{\Gal(H_\infty/K)\times\Z_p^\times}\eta\cdot \psi \
d\Phi^C\right),
$$
where $C$ is chosen so that 
$(1-C\epsilon(C)\langle C\rangle^{-2})\in\Z_p[[\Gamma]]^\times$.
The resulting $\cL_f\in\cA[[\Gal(H_\infty/K)\times\Gamma]]\otimes_{\cA}\cB$ 
does not depend on the choice of $C$.
Any finite order character $\eta\cdot \psi$
of $\Gal(H_\infty/K)\times\Gamma$ determines a character 
$$
\chi(\fb)=\eta(\mathrm{Frob}_\fb)\cdot \psi(\N(\fb))
$$
on ideals of $\co_K$ prime to $p$, and there is an interpolation formula
\cite[Th\'eor\`eme 1.1]{pr1} relating
$\cL_f(\eta,\psi)$ to $L(f,\bar{\chi},1)$, where
$L(f,\bar{\chi},s)$ is the Rankin product of the $L$-function
of $f$ and the $L$-function of the theta series associated to $\bar{\chi}$.

\begin{Prop}\label{L prop}
Let $\mathbf{1}$ denote the trivial character of $\Gamma$.
Then $\cL_f(\eta,\mathbf{1})=0$ for all continuous characters
$\eta$ of $\Gal(H_\infty/K)$.
Furthermore, in the notation of (\ref{two variable decomposition}),
$\cL_{f,0}=0$ and 
$$\log_p(\gamma_0)\cdot \cL_{f,1}(\eta)=
\sum_{\sigma\in\Gal(H_s/K)} \eta(\sigma) L_f(G_\sigma)$$
for every character $\eta$ of $\Gal(H_s/K)$,
where $G_\sigma\in M_2(\Gamma_0(Np^\infty),\cA)$
is defined by 
$$
G_\sigma= \frac{1}{1-C\epsilon(C)}\cdot 
\int_{\Z_p^\times}\log_p\ d\Phi_\sigma^C.
$$
\end{Prop}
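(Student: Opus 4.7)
My plan is as follows. For the first assertion \(\cL_f(\eta,\mathbf{1})=0\), it suffices to verify this when \(\eta\) is a finite-order character of \(\Gal(H_\infty/K)\), since finite-order characters are dense in \(\cA[[\Gal(H_\infty/K)]]\otimes_\cA\cB\). For such an \(\eta\), Perrin-Riou's interpolation formula \cite[Th\'eor\`eme 1.1]{pr1} identifies \(\cL_f(\eta,\mathbf{1})\), up to nonzero transcendental and algebraic factors, with the central Rankin value \(L(f,\bar\eta,1)\) of the convolution of \(f\) with the theta series of \(\eta\). The Heegner hypothesis forces every prime dividing \(N\) to split in \(K\); a standard root-number computation then shows that the global sign of the functional equation of \(L(f,\bar\eta,s)\) is \(-1\), so the central value vanishes. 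Hence \(\cL_f(\eta,\mathbf{1})=0\), and in particular \(\cL_{f,0}=0\).

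To derive the formula for \(\cL_{f,1}\), I will expand \(\cL_f(\eta,\psi)\) as a power series in \(T:=\psi(\gamma_0)-1\). Write \(\cL_f(\eta,\psi)=A(\psi)\, B(\psi)\) with
\begin{equation*}
A(\psi)=\frac{1}{1-C\epsilon(C)\psi(C)^{-2}},\qquad B(\psi)=\sum_{\sigma\in\Gal(H_s/K)}\eta(\sigma)\cdot L_f\!\left(\int_{\Z_p^\times}\psi\; d\Phi_\sigma^C\right).
\end{equation*}
Using \(\psi(x)=\psi(\langle x\rangle)=(1+T)^{\log_p(x)/\log_p(\gamma_0)}\) for \(x\in\Z_p^\times\), a direct expansion gives
\begin{equation*}
\int_{\Z_p^\times}\psi\; d\Phi_\sigma^C = \int_{\Z_p^\times}\! d\Phi_\sigma^C + \frac{T}{\log_p(\gamma_0)}\int_{\Z_p^\times}\log_p\; d\Phi_\sigma^C + O(T^2),
\end{equation*}
so the constant term of \(B(\psi)\) equals \((1-C\epsilon(C))\cdot\cL_f(\eta,\mathbf{1})=0\) by the first part. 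Since \(A(\mathbf{1})=(1-C\epsilon(C))^{-1}\) is nonzero, the linear-in-\(T\) contribution from the \(O(T)\) correction of \(A\) multiplied against this vanishing constant term drops out, and only \(A(\mathbf{1})\) times the linear coefficient of \(B\) survives:
\begin{equation*}
\cL_{f,1}(\eta)=\frac{1}{(1-C\epsilon(C))\log_p(\gamma_0)}\sum_\sigma\eta(\sigma)\cdot L_f\!\left(\int_{\Z_p^\times}\log_p\; d\Phi_\sigma^C\right)=\frac{1}{\log_p(\gamma_0)}\sum_\sigma\eta(\sigma)\, L_f(G_\sigma)
\end{equation*}
by the definition of \(G_\sigma\), which is the desired formula.

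The main obstacle is the first step: quoting Perrin-Riou's interpolation theorem and carrying out the root-number calculation to conclude \(L(f,\bar\eta,1)=0\) under the Heegner hypothesis. Once that vanishing is established, the formula for \(\cL_{f,1}\) is essentially a formal power series manipulation, and the apparent dependence on the auxiliary integer \(C\) is spurious: any two choices of \(C\) produce the same answer because the difference of the corresponding linear terms is proportional to \(\cL_f(\eta,\mathbf{1})\), which is zero.
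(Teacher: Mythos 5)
Your proof of the vanishing $\cL_f(\eta,\mathbf{1})=0$ is correct but goes by a genuinely different route from the paper. The paper does not invoke the interpolation formula or any root-number calculation. Instead, it quotes a fact of Perrin-Riou (\cite[Remarque 3.19]{pr1}) that the $p$-adic modular form $\cL^\sigma(\mathbf{1})=\tfrac{1}{1-C\epsilon(C)}\int_{\Z_p^\times}\mathbf{1}\,d\Phi_\sigma^C$ has vanishing Fourier coefficients $a_m$ for all $p\mid m$; equivalently $U\cL^\sigma(\mathbf{1})=0$. Since $L_f\circ U=\alpha L_f$ with $\alpha$ a unit (Lemma \ref{linear lemma}(d)), this forces $L_f(\cL^\sigma(\mathbf{1}))=0$ directly, term by term. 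Your approach instead reduces to finite-order $\eta$ (a density argument the paper also uses), applies the interpolation formula at $\psi=\mathbf{1}$, and then invokes the standard result that under the Heegner hypothesis the functional equation of $L(f,\bar\eta,s)$ has global sign $-1$ for any finite-order ring class character $\eta$, so the central value vanishes. Both arguments are valid; what the paper's version buys is that it is entirely internal to the $p$-adic side (no archimedean $L$-value comparison, no root number, no worry about whether Euler factors at $p$ in the interpolation might themselves vanish), whereas your version is perhaps more conceptually transparent about \emph{why} the constant term must vanish, namely because it interpolates forced zeros of complex $L$-functions.

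Your derivation of the formula for $\cL_{f,1}$ is essentially the same as the paper's: the paper computes $\lim_{t\to0}\cL_f(\eta,\langle\ \rangle^t)/t$ by the product rule, noting that the factor $L_f\big(\int_{\Z_p^\times}\mathbf{1}\,d\Phi_\sigma^C\big)$ vanishes so only one of the two product-rule terms survives, which is exactly your observation that the constant term of $B$ vanishes so the $O(T)$ correction to $A$ drops out. The remark about independence of $C$ is also sound and is the same reason the paper's definition of $\cL_f$ is well-posed.
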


\begin{proof}
Fix an integer $s>0$.  For each $\sigma\in\Gal(H_s/K)$
define 
$$
\cL^\sigma(\psi)=\frac{1}{1-C\epsilon(C)\psi(C)^{-2}}\cdot 
\int_{\Z_p^\times}\psi\ d\Phi_\sigma^C\ \ 
\in M_2(\Gamma_0(Np^\infty),\cA),
$$
a function on continuous characters $\psi$ of $\Gamma$ with the property
that
$$
\cL_f(\eta,\psi)= \sum_{\sigma\in\Gal(H_s/K)}\eta(\sigma)
L_f(\cL^\sigma(\psi))
$$
for any $\psi$ and any character $\eta$ of $\Gal(H_s/K)$.
By \cite[Remarque 3.19]{pr1} $a_m(\cL^\sigma(\mathbf{1}))=0$
whenever $p\mid m$, and so $U\cL^\sigma(\mathbf{1})=0$.
Lemma \ref{linear lemma}(d) now implies $L_f(\cL^\sigma(\mathbf{1}))=0$.
Since $s$ and $\eta$ were arbitrary, we deduce
$\cL_f(\eta,\mathbf{1})=0$ for all finite order $\eta$, 
hence for all continuous $\eta$ (since $\cL_f(\ ,\mathbf{1})\in
\cA[[\Gal(H_\infty/K)]]\otimes_{\cA}\cB$).  This
is equivalent to $\cL_{f,0}=0$.
Finally, recall that $\langle\ \rangle$ denotes the projection 
$\Z_p^\times\map{}\Gamma$ and compute
\begin{eqnarray*}
\lim_{t\to 0} \frac{\cL_f(\eta,\langle\ \rangle^t)}{t}
& = & \sum_{\sigma\in\Gal(H_s/K)} \frac{d}{dt}\left[ 
\frac{\eta(\sigma)}{1-C\epsilon(C)\langle C\rangle^{-2t}}\cdot 
L_f\left(\int_{\Z_p^\times}\langle x\rangle^t\ d\Phi_\sigma^C(x)\right)
\right]_{t=0}\\
& = & \sum_{\sigma\in\Gal(H_s/K)} \ 
\frac{\eta(\sigma)}{1-C\epsilon(C)}\cdot 
\frac{d}{dt}\left[L_f\left(\int_{\Z_p^\times}
\langle x\rangle^t d\Phi_\sigma^C(x)\right)\right]_{t=0}\\
\end{eqnarray*}
where in the second equality we have used the fact, proved above,
that $L_f\left(\int_{\Z_p^\times}\mathbf{1}\ d\Phi_\sigma^C\right)=0$.
Differentiating under the integral and using
$\log_p(\gamma_0) \cL_{f,1}(\eta)
= \lim_{t\to 0} \frac{1}{t}\cL_f(\eta,\langle\ \rangle^t)$
proves the claim.
\end{proof}

Fix $s\ge 0$ and $\sigma\in\Gal(H_s/K)$.  Choose a proper integral
$\co_s$-ideal, $\fa$, such that the class of $\fa$ in $\Pic(\co_s)$
corresponds to $\sigma$ under the Artin symbol.
For any positive integer $n$ prime to $p$ and any positive divisor
$d|n$, define
$$
\epsilon_\fa(n,d)=\left\{\begin{array}{ll}
\left(\frac{D_1}{d}\right)\left(\frac{D_2}{-Nn/d}\right)
\chi_{D_1,D_2}(\fa\co_K)& \mathrm{if\ } \gcd(d,n/d,D)=1\\
0 & \mathrm{otherwise}
\end{array}\right.
$$
where $D=D_1D_2$ is the factorization into fundamental discriminants
with $(d,D)=|D_2|$ and $\chi_{D_1,D_2}$ is the associated genus character.
That is, the quadratic character of $\Pic(\co_K)$ associated to
the extension $K(\sqrt{D_1})=K(\sqrt{D_2})$.
Set 
$$
\sigma'_\fa(n)=\sum_{ \stack{d|n}{d>0} }\epsilon_\fa(n,d)\log_p(n/d^2).
$$

\begin{Prop}(Perrin-Riou)\label{pr calculation}
For any positive integer $m$ divisible by $p$, the $m^\mathrm{th}$
Fourier coefficient of $G_\sigma$ is given by
$$a_m(G_{\sigma})=-\sum_{\stack{n>0}{(n,p)=1}} 
r_{\fa\mathfrak{d}_s}(m|D|-nN)\sigma'_\fa(n)$$
where $\mathfrak{d}_s=(\sqrt{D}\co_K)\cap\co_s$.
\end{Prop}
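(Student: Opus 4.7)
The plan is to unpack Perrin-Riou's explicit construction of $\Phi_\sigma^C$ from \cite[\S 2]{pr1}: up to the $C$-regularization, $\Phi_\sigma^C$ is the convolution of the weight-one theta series attached to the class $\fa\mathfrak{d}_s$ (whose $r$th Fourier coefficient is $r_{\fa\mathfrak{d}_s}(r)$) with a measure-valued Eisenstein series on $\Z_p^\times$. The Rankin--Selberg convolution formula then yields, for any continuous character $\psi$ of $\Z_p^\times$,
$$a_m\!\left(\int_{\Z_p^\times}\psi\, d\Phi_\sigma^C\right) = \bigl(1 - C\epsilon(C)\psi(C)^{-2}\bigr) \sum_{\stack{n, r > 0,\ (n,p) = 1}{nN + r = m|D|}} r_{\fa\mathfrak{d}_s}(r)\, B_\fa(n,\psi),$$
with $B_\fa(n,\psi) = \sum_{d\mid n}\epsilon_\fa(n,d)\,\psi(d^2/n)$. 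The restriction $(n,p) = 1$ is the content of \cite[Remarque 3.19]{pr1}, used already in the proof of Proposition \ref{L prop}.

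I would then specialize $\psi = \langle\ \rangle^t$ and plug into the definition
$$G_\sigma = \frac{1}{1-C\epsilon(C)}\int_{\Z_p^\times}\log_p\, d\Phi_\sigma^C = \frac{1}{1-C\epsilon(C)}\,\frac{d}{dt}\bigg|_{t=0}\int_{\Z_p^\times}\langle\ \rangle^t\, d\Phi_\sigma^C.$$
For $p\mid m$, the $t=0$ specialization of the $m$th Fourier coefficient of the integral vanishes, so the Leibniz rule applied to the product $(1-C\epsilon(C)\langle C\rangle^{-2t})\, B_\fa(n,\langle\ \rangle^t)$ has no boundary term, and after dividing by $1-C\epsilon(C)$ one is left with
$$a_m(G_\sigma) = \sum_{\stack{n > 0,\ (n,p) = 1}{nN < m|D|}} r_{\fa\mathfrak{d}_s}(m|D| - nN) \cdot \frac{d}{dt}\bigg|_{t=0} B_\fa(n,\langle\ \rangle^t).$$
The derivative of $B_\fa(n,\langle\ \rangle^t)$ at $t=0$ is $\sum_{d\mid n}\epsilon_\fa(n,d)\log_p(d^2/n) = -\sigma'_\fa(n)$, which produces the stated formula (including the overall minus sign).

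The main obstacle is establishing the explicit form of $B_\fa(n,\psi)$ above: one must verify that Perrin-Riou's expansion of the Eisenstein Fourier coefficients, via the decomposition $D = D_1 D_2$ into fundamental discriminants and the associated genus character $\chi_{D_1,D_2}$ on $\Pic(\co_K)$, assembles into exactly the function $\epsilon_\fa(n,d)$ defined before the proposition --- including its vanishing when $\gcd(d, n/d, D) \neq 1$ and the correct dependence on the class of $\fa\co_K$ in $\Pic(\co_K)/\Pic(\co_K)^2$. This combinatorial identification is the substantive content of \cite[\S 2]{pr1}; once it is in hand, the differentiation-under-the-integral argument above is routine, which is why the proposition is attributed to Perrin-Riou.
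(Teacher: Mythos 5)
Your sketch reconstructs the outline of Perrin-Riou's own proof of her Proposition~3.18 in [PR87a], whereas the paper's proof simply cites that proposition and records the errata one must apply to it. Those are not genuinely different routes --- you are descending one level into the same argument --- but the paper's substantive content at this point \emph{is} the errata, and your sketch silently builds the corrected answer in from the start. Concretely, you write the theta series as ``attached to the class $\fa\mathfrak{d}_s$'' with $\mathfrak{d}_s = (\sqrt{D}\co_K)\cap\co_s$. That is not what Perrin-Riou says: on p.~484 she defines ``$\mathfrak{D}$ est le $\co_s$-id\'eal engendr\'e par $\sqrt{D}$,'' i.e.\ $\mathfrak{D}=\sqrt{D}\co_s$, which is \emph{principal} in $\co_s$ and hence makes $\fa'=\mathfrak{D}\fa$ equal to $\fa$ in $\Pic(\co_s)$ for every $s$ --- inconsistent with her own later remark that $\fa'$ and $\fa$ are equivalent only when $s=0$. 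The paper repairs this by observing that the formulas of [PR87a,~\S3.3] and the passage (3.7)$\to$(3.8) in her Lemme~3.17 require $\mathfrak{D}$ to be reinterpreted as $\mathfrak{d}_s$, the key identity being
$$r_{\mathfrak{D}_1^{-1}\fa}(m\delta_1 - nN) = r_{\mathfrak{D}^{-1}_1\mathfrak{D}_2\fa}(m|D| - n\delta_2 N)$$
together with $\mathfrak{D}_1^{-1}\mathfrak{D}_2=\mathfrak{d}_s$ in $\Pic(\co_s)$ (and that PR's $\fa$ is this paper's $\fa^{-1}$, which is harmless because $r_{\fa\mathfrak{d}_s}$ and $\sigma'_\fa$ are both invariant under $\fa\mapsto\fa^{-1}$).

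So the gap is that you delegate the combinatorial identification --- the theta series' ideal class and the form of $B_\fa(n,\psi)$ --- to [PR87a,~\S\S 2--3], which is exactly where Perrin-Riou's exposition is internally inconsistent; you cannot simultaneously announce the answer $\fa\mathfrak{d}_s$ and defer its verification to a source whose stated value is the principal ideal $\sqrt{D}\co_s$. Your sign bookkeeping $\log_p(d^2/n)=-\log_p(n/d^2)$ is internally consistent and does reproduce the minus sign missing from PR's own statement of Proposition~3.18, but it too is back-engineered: the placement of $d^2/n$ versus $n/d^2$ in $B_\fa$ is precisely where one would have to confront (or discover) that sign error when actually unwinding her Rankin expansion.
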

\begin{proof}
This is \cite[Proposition 3.18]{pr1}, where
$G_\sigma$ is denoted $L'_{p,\sigma,\langle\ \rangle}$.
The missing minus sign in the
statement of Perrin-Riou's Proposition 3.18 is a 
typographical error, as the proof makes clear.

In Perrin-Riou's statement $r_{\fa\mathfrak{d}_s}$
appears as $r_{\fa'}$ where $\fa'=\mathfrak{D}\fa$, and  (p. 484)
``$\mathfrak{D}$ est le $\co_s$-id\'eal engendr\'e par $\sqrt{D}$''.
That is, $\mathfrak{D}=\sqrt{D}\co_s\not=\mathfrak{d}_s$.
Later, on p. 486, Perrin-Riou writes ``Lorsque $s=0$,
$\fa'$ et $\fa$ sont \'equivalent'', although under the stated
definition of $\mathfrak{D}$ they are equivalent even when $s\not=0$,
suggesting that an unannounced change of notation has occured.
The formulas of \cite[\S 3.2.3]{pr1}
are correct with $\mathfrak{D}$ defined as above, while those of 
\cite[\S 3.3]{pr1} are correct with $\mathfrak{D}$ replaced by our
$\mathfrak{d}_s$.  Especially, in the proof of \cite[Lemme 3.17]{pr1}
one must interpret $\mathfrak{D}$ as our $\mathfrak{d}_s$ in order to pass
from equation (3.7) to (3.8) 
(``On remplace ensuite $n$ par $\delta_2 n$...'').  
The key point is
$$
r_{\mathfrak{D}_1^{-1}\fa}(m\delta_1-nN)
=
r_{\mathfrak{D}^{-1}_1\mathfrak{D}_2\fa}(m\delta-n\delta_2 N)
$$
in which $\delta=|D|=\delta_1\delta_2$ and $\mathfrak{D}_i$ is
the $\co_s$-ideal of norm $\delta_i$ (the equality is
seen by using the map on $\co_s$-ideals $\fb\mapsto \mathfrak{D}_2\fb$
to identify the sets of ideals being counted).
Using $\mathfrak{D}_1^{-1}\mathfrak{D}_2=\mathfrak{d}_s$
in $\Pic(\co_s)$, one obtains the correct formula.
Also, the first displayed equation in the proof of 
\cite[Lemme 3.17]{pr1} appears to be in error;
the two $p$-adic modular forms in the second equality differ by 
shifting Fourier coefficients by $\delta_1$ 
(see \cite{pr1} (2.4) and Lemme 3.1).  
This misstatement has no effect on the proof.

Perrin-Riou's $\fa$ is our $\fa^{-1}$, but both $r_{\fa\mathfrak{d}_s}$ 
and $\sigma'_\fa$
are unchanged by $\fa\mapsto\fa^{-1}$. For $\sigma'_\fa$ this is obvious;
for $r_{\fa\mathfrak{d}_s}$ use the fact that inversion 
agrees with complex conjugation
in $\Pic(\co_s)$, the fact that complex conjugation preserves norms,
and the fact that $\mathfrak{d}_s$ has order two in $\Pic(\co_s)$.
\end{proof}

\begin{Lem}\label{genus characters}
Suppose that $n$ is prime to $p$ and that there exists a 
proper integral $\co_s$-ideal $\fb$ in the $\Pic(\co_s)$-class
of $\fa$ with $\N(\fb)\equiv -nN\pmod{Dp}$.  Then
$$\sigma'_\fa(n)=\sum_{\ell|n}\log_p(\ell)\cdot
\left\{\begin{array}{ll}
0 & \mathrm{if\ }\epsilon(\ell)=1\\
\ord_\ell(\ell n)\delta(n)R_{\fa\fn\mathfrak{c}}(n/\ell)
& \mathrm{if\ }\epsilon(\ell)=-1\\
\ord_\ell(n)\delta(n)R_{\fa\fn\mathfrak{c}}(n/\ell) 
& \mathrm{if\ }\epsilon(\ell)=0
\end{array}\right.$$
where in the second and third cases $\fn$ is any integral $\co_s$-ideal
of norm $N$ and  $\mathfrak{c}$ is any proper integral
$\co_s$-ideal with $\N(\mathfrak{c})\equiv-\ell\pmod{Dp}$.
\end{Lem}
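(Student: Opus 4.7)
The plan is to expand $\log_p(n/d^2)=\sum_\ell(\ord_\ell(n)-2\ord_\ell(d))\log_p(\ell)$ and swap the order of summation to write
$$
\sigma'_\fa(n)=\sum_{\ell\mid n}\log_p(\ell)\cdot T_\ell,\qquad T_\ell=\sum_{d\mid n}\bigl(\ord_\ell(n)-2\ord_\ell(d)\bigr)\epsilon_\fa(n,d),
$$
so that the claim reduces, prime-by-prime, to computing $T_\ell$. For each $\ell\mid n$ I would set $t=\ord_\ell(n)$, write $n=\ell^t m$ with $(\ell,m)=1$, and write each $d\mid n$ as $d=\ell^a d'$ with $0\le a\le t$ and $d'\mid m$, so that $T_\ell=\sum_{d'\mid m}\sum_{a=0}^t(t-2a)\epsilon_\fa(n,\ell^a d')$.

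When $\ell\nmid D$ the factorization $D=D_1 D_2$ entering $\epsilon_\fa(n,\ell^a d')$ depends only on $d'$, and a direct manipulation of Kronecker symbols (using $\bigl(\tfrac{D_1}{\ell}\bigr)\bigl(\tfrac{D_2}{\ell}\bigr)=\bigl(\tfrac{D}{\ell}\bigr)=\epsilon(\ell)$ and $\bigl(\tfrac{D_i}{\ell}\bigr)^2=1$) collapses to $\epsilon_\fa(n,\ell^a d')=\epsilon(\ell)^a\epsilon_\fa(n,d')$. The inner $a$-sum is then $\sum_{a=0}^t(t-2a)\epsilon(\ell)^a$, which equals $0$ for $\epsilon(\ell)=1$, equals $t+1=\ord_\ell(\ell n)$ for $\epsilon(\ell)=-1$ and $t$ odd, and vanishes for $\epsilon(\ell)=-1$ and $t$ even; the last vanishing matches the fact that $R_{\fa\fn\mathfrak{c}}(n/\ell)=0$ by parity when $\ell$ is inert and $t$ is even. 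When $\ell\mid D$ the constraint $\gcd(d,n/d,D)=1$ forces $a\in\{0,t\}$, so $T_\ell$ collapses to $t\sum_{d'\mid m}\bigl(\epsilon_\fa(n,d')-\epsilon_\fa(n,\ell^t d')\bigr)$, whose two terms involve splittings of $D$ that differ by moving the prime discriminant $\ell^*$ between $D_1$ and $D_2$; this gives the prefactor $t=\ord_\ell(n)$.

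It remains to identify the residual sum over $d'\mid m$ with $\delta(n)\,R_{\fa\fn\mathfrak{c}}(n/\ell)$. This is the classical orthogonality relation between genus characters and genus-ordered ideal counts: a sum of the form $\sum_{d'\mid m}\bigl(\tfrac{D_1}{d'}\bigr)\bigl(\tfrac{D_2}{-Nn/d'}\bigr)\chi_{D_1,D_2}(\fa\co_K)$, taken over the admissible factorizations of $D$, counts proper integral $\co_s$-ideals of norm $n/\ell$ lying in a prescribed genus, and the factor $\delta(n)=2^{\#\{\text{primes of }(n,D)\}}$ arises from the number of admissible factorizations. The hypothesis that some $\fb\in[\fa]$ satisfies $\N(\fb)\equiv-nN\pmod{Dp}$ is used precisely to fix the values of the genus characters on $\fa\co_K$ coherently in terms of local Kronecker symbols at the ramified primes, thereby identifying the relevant genus as that of $\fa\fn\mathfrak{c}$; the auxiliary $\mathfrak{c}$ with $\N(\mathfrak{c})\equiv-\ell\pmod{Dp}$ absorbs the genus-character twist at $\ell^*$ introduced in the ramified case.

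The main obstacle is the ramified step: one has to verify via prime-by-prime quadratic reciprocity that the difference $\epsilon_\fa(n,d')-\epsilon_\fa(n,\ell^t d')$ simplifies to twice a single genus-character expression (up to the twist by $\mathfrak{c}$), and carefully track the combinatorial bookkeeping that produces exactly $\delta(n)$ rather than a smaller or larger power of $2$. Once these genus-theoretic reductions are carried out, comparing prime-by-prime with the stated formula closes the proof.
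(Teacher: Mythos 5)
The paper's proof is much shorter than what you propose: it simply cites Gross--Zagier \cite[Proposition IV.4.6(b)]{gz} for the full genus-theoretic computation, and that citation yields the stated identity with $R_{\fa\fn\mathfrak{c}}(n/\ell)$ replaced by $R_{\fa\fn\mathfrak{c}\co_K}(n/\ell)$, i.e.\ counting $\co_K$-ideals rather than $\co_s$-ideals. The actual new content of the paper's proof is the argument that these two counts coincide, and \emph{that} is where the hypothesis $\N(\fb)\equiv -nN\pmod{Dp}$ is used: genus theory for $\co_s$ has an extra factor $\Gal(K(\sqrt{p^*})/K)$ compared to that of $\co_K$, and the hypothesis pins down the value of the corresponding Legendre symbol at $p$ so that $J\mapsto J\cap\co_s$ carries the $\co_K$-genus count onto the $\co_s$-genus count.

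Your proposal, in effect, re-derives the GZ identity from scratch. The elementary steps you carry out — expanding $\log_p(n/d^2)$, isolating the prime $\ell$, evaluating $\sum_a (t-2a)\epsilon(\ell)^a$ in the unramified case and observing $a\in\{0,t\}$ in the ramified case — are all correct. But the closing identification with $\delta(n)R_{\fa\fn\mathfrak{c}}(n/\ell)$ is where the proposal has a genuine gap, and it is structural rather than just a missing computation. The genus characters $\chi_{D_1,D_2}$ that appear in $\epsilon_\fa(n,d)$ are characters of $\Pic(\co_K)$, evaluated at $\fa\co_K$. Consequently any ``orthogonality of genus characters'' argument built from them will naturally count proper $\co_K$-ideals, not proper $\co_s$-ideals. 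You would therefore land on $R_{\fa\fn\mathfrak{c}\co_K}(n/\ell)$, and you have not addressed the passage from $\co_K$-ideals to $\co_s$-ideals at all. You have also misidentified what the hypothesis on $\N(\fb)$ is for: it is not needed to ``fix the values of the genus characters on $\fa\co_K$'' (those are determined without it), but precisely to verify that an $\co_K$-ideal $J$ in the prescribed $\co_K$-genus yields $J\cap\co_s$ in the prescribed $\co_s$-genus, by checking the Kronecker symbol $\bigl(\tfrac{\cdot}{p}\bigr)$ on norms. Without confronting this $\co_s$ versus $\co_K$ dichotomy, the proof cannot close.
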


\begin{proof}
By \cite[Proposition IV.4.6(b)]{gz}, the stated equality holds with 
$R_{\fa\fn\mathfrak{c}}(n/\ell)$ 
replaced by $R_{\fa\fn\mathfrak{c}\co_K}(n/\ell)$;
that is, if we count integral $\co_K$-ideals of norm $n/\ell$ in the
$\co_K$-genus of $\fa\fn\mathfrak{c}\co_K$.
So, we only need show that 
$R_\mathfrak{anc}(n/\ell)=R_{\mathfrak{anc}\co_K}(n/\ell)$
under the stated hypotheses.
The map $I\mapsto I\co_K$ takes the collection 
$\mathfrak{R}_{\mathfrak{anc}}(n/\ell)$ of proper 
$\co_s$-ideals of norm $n/\ell$ in the $\co_s$-genus of $\mathfrak{anc}$
injectively to the set $\mathfrak{R}_{\mathfrak{anc}\co_K}(n/\ell)$
of proper $\co_K$-ideals of norm $n/\ell$
in the $\co_K$-genus of $\mathfrak{anc}\co_K$.  It suffices to show
that this map has an inverse.  More precisely, we show
that the map $J\mapsto J\cap\co_s$ from integral $\co_K$-ideals
of norm prime to $p$  to integral $\co_s$-ideals of norm prime to $p$
restricts to a map $\mathcal{R}_{\mathfrak{anc}\co_K}(n/\ell)\map{}
\mathcal{R}_{\mathfrak{anc}}(n/\ell)$.

Suppose $I=J\cap\co_s$ is an integral $\co_s$-ideal of norm $n/\ell$ such that 
$J\in\mathfrak{R}_{\mathfrak{anc}\co_K}(n/\ell)$. 
Set $p^*=(-1)^\frac{p-1}{2}p$.
Genus theory (for example, \cite[\S 6.A]{cox} discusses the genus 
theory of $\co_K$ at length, and that of $\co_s$ is similar) 
gives a canonical isomorphism
$$
\Pic(\co_s)/\Pic(\co_s)^2\iso \Pic(\co_K)/\Pic(\co_K)^2\times
\Gal(K(\sqrt{p^*})/K)
$$
under which the $\co_s$-genus of $I$ is sent to the
$\co_K$-genus of $J=I\co_K$
in the first factor, and to its Artin symbol 
$\left(\frac{I}{K(\sqrt{p^*})/K}\right)=
\left(\frac{\N(I)}{\Q(\sqrt{p^*})/\Q}\right)$
in the second factor. 
The same holds with $I$ replaced by $\mathfrak{bnc}$, and 
since the $\co_K$-genera of $J$ and $\mathfrak{bnc}\co_K$ agree
by assumption, $I\in \mathfrak{R}_{\mathfrak{anc}}(n/\ell)=
\mathfrak{R}_{\mathfrak{bnc}}(n/\ell)$
if and only if 
$$
\left(\frac{\N(I)}{\Q(\sqrt{p^*})/\Q}\right)=
\left(\frac{\N(\mathfrak{bnc})}{\Q(\sqrt{p^*})/\Q}\right)
$$
which occurs if and only if 
$\left(\frac{\N(I)}{p}\right)
=\left(\frac{\N(\mathfrak{bnc})}{p}\right).$
Since $\N(I)=n/\ell$ and  $\N(\mathfrak{bnc})\equiv nN^2\ell \pmod{p}$
we are done.
\end{proof}

\begin{Cor}\label{pr calc 2}
Let $\kappa\in\Gal(H_s/K)$ be the Artin symbol of $\mathfrak{d}_s$.
For any positive integer $m$ divisible by $p$, the $m^\mathrm{th}$
Fourier coefficient of $G_{\sigma\kappa}$ is given by the expression
$$
-\sum_{\stack{n>0}{(n,p)=1}}\sum_{\ell|n}
\log_p(\ell)\cdot r_\fa(m|D|-nN)
\cdot\left\{\begin{array}{ll}
0 & \mathrm{if\ }\epsilon(\ell)=1\\
\ord_\ell(\ell n)\delta(n)R_{\fa\fn\mathfrak{c}}(n/\ell)
& \mathrm{if\ }\epsilon(\ell)=-1\\
\ord_\ell(n)\delta(n)R_{\fa\fn\mathfrak{c}}(n/\ell) 
& \mathrm{if\ }\epsilon(\ell)=0
\end{array}\right.$$
where in the second and third cases $\fn$ is any integral $\co_s$-ideal
of norm $N$ and  $\mathfrak{c}$ is any proper integral
$\co_s$-ideal with $\N(\mathfrak{c})\equiv-\ell\pmod{Dp}$.
\end{Cor}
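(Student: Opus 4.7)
My plan is to apply Proposition~\ref{pr calculation} to $G_{\sigma\kappa}$, then Lemma~\ref{genus characters}, exploiting two facts about $\mathfrak{d}_s = (\sqrt{D}\co_K)\cap \co_s$: it has order two in $\Pic(\co_s)$, and it becomes principal in $\co_K$. Choose $\fa\mathfrak{d}_s$ as representative of the class corresponding to $\sigma\kappa$. Proposition~\ref{pr calculation} gives
$$
a_m(G_{\sigma\kappa}) = -\sum_{n>0,\ (n,p)=1} r_{\fa\mathfrak{d}_s^2}(m|D|-nN)\,\sigma'_{\fa\mathfrak{d}_s}(n).
$$
Two reductions bring this to the form stated in the corollary. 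First, a direct computation (or equivalently the order-two property of $\mathfrak{d}_s$ in $\Pic(\co_s)$ cited in the proof of Proposition~\ref{pr calculation}) gives $\mathfrak{d}_s^2 = D\co_s$, so $r_{\fa\mathfrak{d}_s^2}=r_\fa$. Second, and less obvious, $\sigma'_{\fa\mathfrak{d}_s}=\sigma'_\fa$ identically: the only dependence of $\sigma'_\fa(n)$ on $\fa$ is through the factor $\chi_{D_1,D_2}(\fa\co_K)$ in $\epsilon_\fa(n,d)$, and since $\mathfrak{d}_s\co_K=\sqrt{D}\co_K$ is principal in $\co_K$, every genus character of $\Pic(\co_K)$ satisfies $\chi_{D_1,D_2}(\fa\mathfrak{d}_s\co_K)=\chi_{D_1,D_2}(\fa\co_K)$. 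Thus
$$
a_m(G_{\sigma\kappa}) = -\sum_{n>0,\ (n,p)=1} r_\fa(m|D|-nN)\,\sigma'_\fa(n).
$$

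Next I would invoke Lemma~\ref{genus characters} to evaluate each $\sigma'_\fa(n)$. Its hypothesis requires a proper integral $\co_s$-ideal $\fb \in [\fa]$ with $\N(\fb)\equiv -nN\pmod{Dp}$; for each $n$ contributing nontrivially, i.e.\ with $r_\fa(m|D|-nN)>0$, any witness to that nonvanishing serves as such a $\fb$, because $p\mid m$ and $|D|\mid m|D|$ together force $m|D|-nN\equiv -nN\pmod{Dp}$. Substituting the resulting case-by-case expression for $\sigma'_\fa(n)$ into the sum over $n$ and reordering the double sum over $n$ and $\ell\mid n$ yields precisely the formula asserted in the corollary.

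The main non-routine step is the identity $\sigma'_{\fa\mathfrak{d}_s}=\sigma'_\fa$, which is what permits the final formula to be written in terms of $r_\fa$, $R_{\fa\fn\mathfrak{c}}$ and $\sigma'_\fa$ rather than $\mathfrak{d}_s$-twisted versions thereof; it hinges on the triviality of genus characters on the principal ideal $\sqrt{D}\co_K$. Once this and $\mathfrak{d}_s^2\sim 1$ in $\Pic(\co_s)$ are in place, the argument is a mechanical concatenation of Proposition~\ref{pr calculation} and Lemma~\ref{genus characters}, with no further genus-theoretic input needed beyond what Lemma~\ref{genus characters} already uses internally.
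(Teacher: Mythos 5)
Your proof is correct and follows essentially the same route as the paper: apply Proposition~\ref{pr calculation} with $\fa\mathfrak{d}_s$ representing $\sigma\kappa$, reduce via $\kappa^2=1$ (so $r_{\fa\mathfrak{d}_s^2}=r_\fa$) and $\sigma'_{\fa\mathfrak{d}_s}=\sigma'_\fa$ (triviality of genus characters on the principal $\co_K$-ideal $\sqrt{D}\co_K$), then invoke Lemma~\ref{genus characters}. The paper's own proof is terser, but your explicit verification that the hypothesis of Lemma~\ref{genus characters} is met whenever $r_\fa(m|D|-nN)>0$ (using $Dp\mid m|D|$ since $p\mid m$) is a welcome detail that the paper leaves implicit.
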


\begin{proof}
Combine Proposition \ref{pr calculation} and Lemma \ref{genus characters},
and use $\sigma_{\fa}'=\sigma_{\fa\mathfrak{d}_s}'$ (which 
follows from the definition of $\sigma'$ and the fact that
$\mathfrak{d}_s\co_K$ is principal) and $\kappa^2=1$.
\end{proof}


\section{The $p$-adic height pairing}
\label{height pairing}


In this section we recall some known facts about $p$-adic N\'eron 
symbols and  $p$-adic height
pairings on abelian varieties and, when the abelian variety is the 
Jacobian of a curve, the connection with $p$-adic 
N\'eron symbols and intersection theory on the curve.  


\subsection{Intersection theory}
\label{intersection theory}


Let $R$ be complete DVR, $S=\Spec(R)$.
Let  $\uX\map{}S$ be an  integral, proper scheme over $S$ with
generic fiber a smooth curve $X$, and
suppose $\uC$ and $\uD$ are effective Cartier  
divisors with no common components.
Define the intersection multiplicity
$i_y(\uC,\uD)$ at a closed point $y$ of $\uX$ to be the length of the 
$\co(\uX)_y$-module $\co(\uX)_y/(f,g)$ where $f$ and $g$ are
defining equations of $\uC$ and $\uD$ in a neighborhood of $y$.
Define the total intersection multiplicity
$i(\uC,\uD)=\sum_y i_y(\uC,\uD)[k(y):k(s)]$ where $s$ is the closed
point of $S$ and the sum is over closed points of $\uX$

We now assume that $\uX$ is regular (in particular we need not
distinguish between Weil divisors
and Cartier divisors), and record some fundamental
properties of the total intersection multiplicity.  We refer the  reader to
\cite{gross} and \cite[Chapter III]{Lang} for details.
The total intersection multiplicity  
is bi-additive, and so extends to divisors with rational
coefficients.  
We define, for $C$ and $D$ 
degree zero divisors on $X$ with disjoint support, 
$$
[C,D]= i(\uC+C',\uD)=i(\uC,\uD+D')
$$
where $\uC$ and $\uD$ are the horizontal divisors on $\uX$
whose generic fibers are $C$ and $D$, respectively,
and $C'$ (resp. $D'$) is a fibral divisor with rational coefficients
chosen so that the symbol $i(\uC+C',\ )$ (resp. $i(\ ,\uD+D')$) 
vanishes on all fibral divisors.  Let $L$ be the fraction field of 
$R$ and let $v$ denote the normalized valuation on $L$, so that
$v(\varpi)=1$ for a uniformizer $\pi$. If $C=(f)$ is a principal divisor
then $[C,D]=v(f(D))$ where $D=\sum n_i (D_i)$ is a linear combination
of prime divisors $D_i$ with residue field $L_i$ and
\begin{equation}\label{divisor decomp}
f(D)=\prod_i \N_{L_i/L} (f(D_i)^{n_i}).
\end{equation}


\subsection{$p$-adic N\'eron symbols I}
\label{local symbols}


We now define local $p$-adic N\'eron symbols on abelian varieties.  
The contents
of this subsection are taken from \cite[\S 4]{pr1} essentially verbatim.

Let $\ell$ be a rational prime
and $L$ a finite extension of $\Q_\ell$. Let $A$
be an abelian variety over $L$ and assume that either $\ell\not=p$
or that $A$ has good reduction.  Fix a nontrivial continuous 
additive character $\rho:L^\times\map{}\Z_p$. 
If $\ell=p$ we assume that $\rho$ is ramified.

\begin{Prop}
\label{pr height}
There is a $\Q_p$-valued N\'eron symbol
$\langle\mathfrak{C}, d\rangle= \langle \mathfrak{C}, d\rangle_{A,\rho}$
defined whenever $\mathfrak{C}$ is an algebraically trivial
divisor on $A$, $d$ is a zero cycle of degree zero on 
$A$ rational point-by-point over $L$,
and the supports of $\mathfrak{C}$ and $d$ have no common points.
This symbol satisfies
\begin{enumerate}
\item $\langle\ ,\ \rangle$ is bilinear (whenever this makes sense)
and invariant under translation by elements of $A(L)$, 
\item if $\mathfrak{C}=(h)$ is principal then 
$\langle \mathfrak{C},d\rangle=\rho(h(d))$,
where $h(d)=\prod_i f(d_i)$ is defined as in (\ref{divisor decomp}),
\item for any endomorphism $\phi:A\map{}A$, 
$\langle \phi^*\mathfrak{C}, d\rangle=\langle \mathfrak{C},\phi_*d\rangle$,
\item  for any $x_0\in A(L)$
and any $\mathfrak{C}$ as above, the function $x\mapsto \langle \mathfrak{C}, 
(x)-(x_0)\rangle$ is continuous
for the $\ell$-adic topology on $A(L)$,
\item if $\ell=p$, $L'$ is a finite extension of $L$ contained
in the $\Z_p$-extension of $L$ cut out by $\rho$, 
and $\mathfrak{C}$ is a degree zero divisor on $A_{/L'}$, then
$$
\langle \N_{L'/L} \mathfrak{C}, d\rangle
\subset c^{-1} \rho(\N_{L'/L}(L'))
$$
whenever this is defined, for some constant $c\in\Z_p$ independent of
$L'$, $\mathfrak{C}$, and $d$.
\end{enumerate}
Furthermore, if $\ell\not=p$, or if $\ell=p$ and $A$ has ordinary reduction,
then such a symbol is unique.
\end{Prop}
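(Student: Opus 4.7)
The strategy is to encode the symbol as a $\rho$-splitting of the Poincar\'e biextension, following the Mazur--Tate formalism used in \cite[\S4]{pr1}. An algebraically trivial divisor $\mathfrak{C}$ on $A$ determines a point $c_\mathfrak{C} \in A^\vee(L)$, and the Poincar\'e biextension $\mathcal{P}$ of $A \times A^\vee$ by $\mathbb{G}_m$ assigns to each pair $(x, c_\mathfrak{C})$ a $\mathbb{G}_m$-torsor, bi-additive in its arguments. A symbol satisfying (a)--(c) is then the same data as a continuous homomorphism $\mathcal{P}(L) \to \Q_p$ whose restriction to the $\mathbb{G}_m$-fibers is $\rho$; producing the Néron symbol is therefore equivalent to producing such a $\rho$-splitting, and the rest of the proof consists in constructing it and verifying (d), (e), and uniqueness.

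For $\ell \neq p$: the N\'eron models of $A$ and $A^\vee$ exist over $\co_L$, and $\mathcal{P}$ extends canonically to a biextension of the identity components. Intersection theory on the N\'eron model yields the classical $\Z$-valued N\'eron splitting on a finite-index subgroup, and composing with $\rho$ and extending $\Q_p$-linearly produces the required symbol; property (d) is then inherited from the $\ell$-adic continuity of the classical symbol together with continuity of $\rho$, and (e) is vacuous. For $\ell = p$ with good reduction: the formal group $\hat A(\co_L)$ sits in $A(L)$ with finite index, and the formal logarithm $\log_{\hat A}$, combined with the ramified character $\rho$ and a splitting of the Hodge filtration on the universal vectorial extension of $A^\vee$, yields a $\rho$-splitting on $\hat A \times \hat A^\vee$; one extends over the finite quotient to the full symbol. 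Properties (a)--(d) are formal from the construction.

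Property (e) and uniqueness are the real content. For (e), one must bound the denominators in $\langle \Norm_{L'/L}\mathfrak{C},\, d\rangle$ uniformly as $L'$ runs through the $\Z_p$-extension cut out by $\rho$; this reduces, via the trace-compatibility of $\log_{\hat A}$ across the tower, to controlling the denominators that arise when pushing formal-group points through $\Norm_{L'/L}$, the key input being that $\Norm_{L'/L}(\hat A(\co_{L'}))$ sits inside $\hat A(\co_L)$ with bounded index independent of the layer. I expect this uniform-denominator estimate along the tower to be the main obstacle. Uniqueness follows from the observation that two symbols satisfying (a)--(d) differ by a continuous bi-additive map $A^\vee(L) \times A(L) \to \Q_p$ vanishing on principal divisors. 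When $\ell \neq p$, the quotient of $A(L)$ by its torsion is pro-$\ell$, hence admits no nonzero continuous $\Q_p$-valued character. When $\ell = p$ with $A$ ordinary, the connected-\'etale sequence of $A[p^\infty]$ splits canonically, inducing a canonical decomposition of $A(L) \otimes \Q_p$ into a formal part (on which the splitting is pinned down by $\log_{\hat A}$ together with (b) and (d)) and a part lifting the $p$-adic points of the étale quotient (which is controlled by the finite residue field and therefore admits no further ambiguity), leaving only the zero difference.
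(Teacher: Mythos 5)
Your existence sketch (via $\rho$-splittings of the Poincar\'e biextension in the Mazur--Tate formalism) is in spirit consistent with how Perrin-Riou constructs these symbols, and the paper simply cites \cite[Th\'eor\`eme 4.2, 4.7]{pr1} for this. Your uniqueness argument for $\ell\not=p$ is also fine and matches the paper's ``trivial for topological reasons.'' The problem is in your uniqueness argument for $\ell=p$ with ordinary reduction, where there is a genuine gap.

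You claim that after passing to the connected-\'etale decomposition, the formal part of the difference $G(\mathfrak{C},\cdot)$ is ``pinned down by $\log_{\hat A}$ together with (b) and (d),'' leaving zero. This is not so: properties (a)--(d) do not determine the symbol on the formal group. The difference $G(\mathfrak{C},\cdot)$ is a continuous $\Z_p$-linear map $A(L)\to\Q_p$, and since $\hat A(\mathfrak{m}_L)$ has finite index in $A(L)$, $G$ factors (rationally) through $\hat A(\mathfrak{m}_L)$; but $\hat A(\mathfrak{m}_L)\otimes\Q_p$ is a positive-dimensional $\Q_p$-vector space (e.g.\ via $\log_{\hat A}$ it is isomorphic to $L$), so there is ample room for nonzero continuous linear functionals. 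Properties (b) and (d) alone cannot exclude them. Indeed this is precisely why the proposition lists property (e) and why the uniqueness claim is not stated in the non-ordinary case. The paper's argument uses (e) in an essential way on the $A^\vee$ side: $G(\mathfrak{C},x)$ is linear in $\mathfrak{C}\in A^\vee(L)$, property (e) forces $G(\mathfrak{C},x)=0$ whenever $\mathfrak{C}$ is a universal norm from the $\Z_p$-extension cut out by $\rho$ (because $\rho(\N_{L'/L}(L'^\times))\to 0$ as $L'$ grows), and Mazur's theorem \cite[Proposition 4.39]{mazur} says that since $A^\vee$ has ordinary reduction, such universal norms have finite index in $A^\vee(L)$. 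Linearity with values in $\Q_p$ then gives $G\equiv 0$. You should rework your uniqueness step to incorporate (e) and the finite-index-of-universal-norms input; without them the argument does not close.
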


\begin{proof}
In the case $\ell\not=p$, or $\ell=p$ but $A$ has ordinary reduction,
see the references after \cite[Th\'eor\`eme 4.2]{pr1}) for existence.
In the case $\ell=p$ with non-ordinary reduction, the existence
is \cite[Th\'eor\`eme 4.7]{pr1}.  The translation invariance
is not stated explicitly by Perrin-Riou, but follows from the construction
as in \cite[Lemma 2.14]{Bloch}.
We sketch the proof of the uniqueness. If 
$\langle\ ,\ \rangle'$ is another such 
symbol then we may define 
$$
G(\mathfrak{C},x)=\langle \mathfrak{C},(x)-(0)\rangle -
\langle \mathfrak{C},(x)-(0)\rangle'. 
$$ 
This defines a function $A^\vee(L)\times A(L)\map{}\Q_p$
which is linear in the first variable and continuous in the second.
Using translation invariance and 
the theorem of the square  \cite[Theorem 6.7]{milne}, 
one can show that $G$ is also linear in the second variable.
Hence for fixed $\mathfrak{C}$,
$G(\mathfrak{C},\ )$ defines a continuous linear map
$A(L)\map{}\Q_p$.  If $\ell\not=p$ this map must be 
trivial for topological reasons.
If $\ell=p$ and $A$ has ordinary reduction, then $A^\vee$ also has ordinary
reduction, and \cite[Proposition 4.39]{mazur} implies that the
universal norms from the (ramified) $\Z_p$-extension cut out by $\rho$
have finite index in $A^\vee(L)$.  From this and the boundedness property
(e), we see that $G$ is identically zero.
\end{proof}

When $\ell\not=p$ the N\'eron symbol is compatible with base extension
in the following sense.  If $L'/L$ is a finite extension,
$A'=A\times_L L'$, and $\rho'=\rho\circ\N_{L'/L}$,
then 
\begin{equation}\label{height norms}
\langle \mathfrak{C}, d\rangle_{A',\rho'}=
\langle\N_{L'/L}\mathfrak{C}, d\rangle_{A,\rho}
\end{equation}
for $\mathfrak{C}$ an algebraically trivial divisor on $A'$
and $d$ a point-by-point rational zero cycle of degree zero on $A$.  
This allows us to remove the hypothesis
in Proposition \ref{pr height} 
that $d$ is rational point-by-point, by choosing an extension $L'/L$
over which $d$ becomes pointwise rational and defining
$$
\langle \mathfrak{C}, d\rangle_{A,\rho}= 
[L':L]^{-1}\langle \mathfrak{C}, d\rangle_{A',\rho'}.
$$
This is independent of the choice of $L'$ by (\ref{height norms}).
Property (b) of Proposition \ref{pr height} continues to hold
for this slight extension of the N\'eron symbol, provided
that one extends the definition of $h(d)$ as in (\ref{divisor decomp}).

When $\ell=p$ the N\'eron symbol on $A$ may not uniquely determined
by the properties above, but one
can choose a compatible family (in the sense that (\ref{height norms})
holds) of N\'eron symbols $\langle\ ,\ \rangle_{A',\rho'}$ as 
$L'$ varies over the finite extensions of $L$. Again, this 
allows one to remove the hypothesis that $d$ is defined point
by point.  Perrin-Riou only states the existence of 
compatible families
for subfields of the extension of $L$ cut out by $\rho$, but
the same argument holds for all finite extensions.

\begin{Rem}\label{base compatible}
Although the choice of a N\'eron symbol on $A$ in residue characteristic
$p$ is (sometimes) not unique, our results do not depend on the the choice.
Hence we fix,
once and for all, a choice of N\'eron symbol on $J_0(N)_{H_s,v}$
for every $s$ and every prime $v$ of $H_s$ above $p$, with the understanding
that these choices are compatible as $s$ varies in the sense 
of (\ref{height norms}).
\end{Rem}

Now suppose that $A$ is the Jacobian of a smooth, proper, geometrically
connected curve $X$ over $L$, and that $X$ has an $L$-rational
point $\infty$.  Let $\alpha:X\map{}A$ be the canonical embedding
$x\mapsto (x)-(\infty)$.  Suppose we are given degree zero 
divisors $C$ and $D$ on $X$ with disjoint support.
Pullback by $\alpha$ restricts to an isomorphism 
$\alpha^*:\Pic^0(A)\map{}\Pic^0(X)$, and so 
there is an algebraically trivial divisor $\mathfrak{C}$ whose
associated line bundle pulls back to the line bundle associated to
$C$. Thus $C=\alpha^*\mathfrak{C}+(f)$ for 
some rational function $f$ on 
$X$.  The pair $(\mathfrak{C},f)$ may be chosen so that
$(f)$ is disjoint from $D$
and then it follows that $\mathfrak{C}$ has no points in common
with $\alpha_*D$. We now define
\begin{equation}\label{curve height def}
\langle C, D\rangle_{X,\rho}=\langle \mathfrak{C}, \alpha_*D\rangle_{A,\rho}
+ \rho(f(D)),
\end{equation}
where $f(D)$ is defined by (\ref{divisor decomp}).
This is independent of the choice of $\mathfrak{C}$ (by Proposition
\ref{pr height}(b)) and the choice
of $f$ (which is determined up to $L^\times$ once $\mathfrak{C}$ is chosen).


\subsection{$p$-adic N\'eron symbols II}
\label{global pairing}


Identifying $\Gamma$ with the Galois
group of the unique $\Z_p$-extension of $\Q$ via the cyclotomic character,
the reciprocity
map of class field theory and the $p$-adic logarithm define
an idele class character
$$
\rho_\Q: \mathbf{A}_\Q^\times/\Q^\times\map{}\Gamma\map{\log_p}\Z_p.
$$ 
Fix a finite extension $L/\Q$, let $\rho_L$ be the idele
class character of $L$ defined by $\rho_L=\rho_\Q\circ\N_{L/\Q}$.
Fir each finite place $v$ of $L$, let $\pi_v$ be a uniformizer of
$L_v$ and let $\N(v)$ denote the absolute residue degree of $v$.
We may decompose $\rho_L=\sum_v \rho_{L_v}$ 
as a sum of local characters, and then $\rho_{L_v}(\pi_v)=\log_p(\N(v))$
for any prime $v$ not above $p$.  We note that this does not agree
 with \cite[p. 501]{pr1}, which seems to be in error (note also the remarks
of \cite[\S II.6.4]{nekovar}), although perhaps this is 
attributable to a different
normalization of class  field theory.  We remind the reader that we always
use the \emph{arithmetic} conventions.

Let $A$ be an abelian variety over $L$ with 
good reduction above $p$.  Summing the local N\'eron symbols 
$\langle\ ,\ \rangle_v=\langle\ ,\ \rangle_{A_v,\rho_{L_v}}$
on the completions $A_v=A\times_L L_v$ 
defines a  bilinear pairing on Mordell-Weil groups
\begin{equation}\label{global abelian}
\langle\ ,\ \rangle_{A,L}:A^\vee(L)\times A(L)\map{}\Q_p.
\end{equation}
Indeed, given $a\in A^\vee(L)$ and $b\in A(L)$,
let $\mathfrak{C}$ be an algebraically trivial divisor on 
$A$ which represents $a$ and let $d=\sum n_i(d_i)$ be a zero
cycle of degree zero on $A$ with
$\sum n_id_i=b$.
These can be chosen so that $\mathfrak{C}$
and $d$ have no points in common and we then define 
$$
\langle a,b\rangle_{A,L}=\sum_v \langle 
\mathfrak{C},d\rangle_v
$$
where the sum is over the finite places of $L$.
A different choice of $\mathfrak{C}$ changes the pairing
by 
$$
\sum_v\langle (h),d\rangle_v=\sum_v \rho_{L,v}(h(d))=\rho_L(h(d))=0
$$
for some rational function $h$ on $A$. 
Now fix $\mathfrak{C}$ and consider the expression
$\sum_v\langle\mathfrak{C},d\rangle_v$.
We have just seen
that this depends
only on the linear equivalence class of $\mathfrak{C}$ (which
is translation invariant), and thus the translation invariance of each
$\langle\ ,\ \rangle_v$ shows that 
$\sum_v\langle\mathfrak{C},d\rangle_v$ is translation invariant
in the second variable (with $\mathfrak{C}$ held fixed).  
From this one may deduce
$$
\sum_v\langle\mathfrak{C},d\rangle_v
=\sum_v\langle\mathfrak{C},(b)-(0)\rangle_v,
$$
and so the left hand side depends only on $b$ and not on the choice of $d$.

Now suppose $X$ is a proper, smooth, geometrically connected
curve over $L$ with an $L$-rational point, and that $A$ is the
Jacobian of $X$. Let 
$\alpha:X\map{}A$ be the associated  canonical embedding.
For each place $v$ of $L$ we have from \S \ref{local symbols}
a $\Q_p$-valued symbol 
$\langle\ ,\ \rangle_{X_v,\rho_{L_v}}$
on disjoint divisors on $X_v=X\times_L L_v$.
By summing over all places, we obtain a symbol 
\begin{equation}\label{global curve}
\langle\ ,\ \rangle_{X,L}=\sum_v \langle\ ,\ \rangle_{X_v,\rho_{L_v}}
\end{equation}
defined on degree zero divisors of $X$ with disjoint support.  This pairing
descends to a (symmetric) pairing on linear equivalence classes
(this follows from Proposition \ref{curve height}(a,b) below
and the fact that $\rho=\sum_v\rho_{L_v}$ vanishes on $L^\times$).
In particular, $\langle\ ,\ \rangle_{X,L}$
extends bilinearly to all pairs of degree zero divisors, without the
assumption of disjoint support.

\begin{Rem}\label{Theta}
As $\langle\ ,\ \rangle_{X,L}$ is defined on linear equivalence classes,
it descends to a bilinear pairing
$$
\langle\ ,\ \rangle_{X,L}:A(L)\times A(L)\map{}\Q_p.
$$
which agrees with the pairing $-\langle\ ,\ \rangle_{A,L}$
when one identifies $A\iso A^\vee$ via the
canonical principal polarization \cite[\S 4.3]{pr1}.
\end{Rem}

\begin{Prop}\label{curve height}
Let $v$ be a prime of $L$ above a rational prime $\ell$.
The local N\'eron symbol 
$\langle C ,D \rangle_v=\langle C ,D \rangle_{X_v,\rho_{L_v}}$, 
defined on degree zero divisors on $X_v$ with disjoint support, satisfies
\begin{enumerate}
\item $\langle\ ,\ \rangle_v$ is symmetric and bilinear,

\item if $C=(f)$ is a principal divisor  then
$\langle C, D\rangle_v=\rho_{L_v}(f(D)),$

\item if $T$ is a correspondence from $X$ to itself and $T^\iota$
is the dual correspondence, then
$$
\langle TC,D\rangle_v=\langle C, T^\iota D\rangle_v,
$$

\item for $d_0\in X_v(L_v)-\mathrm{supp}(C)$, the function
on $X_v(L_v)-\mathrm{supp}(C)$
$$
d\mapsto\langle C,(d)-(d_0)\rangle_v
$$
is continuous for the $v$-adic topology,

\item if $\ell=p$, $L'$ is a finite extension of $L_v$ contained
in the cyclotomic $\Z_p$-extension of $L_v$, 
and $C$ and $D$ are  degree zero divisors on $X_v\times_{L_v}L'$ 
and $X_v$, respectively, then
$$
\langle \N_{L'/L_v} C, D\rangle_v
\subset c^{-1} \rho_{\Q_p}(\N_{L'/\Q_p}(L'))
$$
whenever this is defined, for some constant $c\in\Z_p$ independent of
$C$, $D$, and $L'$.
\end{enumerate}
Furthermore $\langle\ ,\ \rangle_v$ takes values in a compact subset of
$\Q_p$.
\end{Prop}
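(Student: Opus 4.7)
The plan is to deduce each of (a)--(e) and the final boundedness claim from the corresponding property of the abelian-variety Néron symbol on the Jacobian $A_v=\mathrm{Jac}(X_v)$ given by Proposition \ref{pr height}, combined with elementary properties of the idele class character $\rho_{L_v}$. The organizing identity is the defining formula (\ref{curve height def}),
$$
\langle C,D\rangle_v \;=\; \langle\mathfrak{C},\alpha_*D\rangle_{A_v,\rho_{L_v}} \;+\; \rho_{L_v}(f(D)),
$$
where $C=\alpha^*\mathfrak{C}+(f)$. The first step is to verify that this expression is independent of the choice of $(\mathfrak{C},f)$: any other choice differs by $\mathfrak{C}\mapsto\mathfrak{C}+(h)$ and $f\mapsto f/(h\circ\alpha)$, and Proposition \ref{pr height}(b) applied to the principal divisor $(h)$ absorbs the change into the $\rho_{L_v}$-term.

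With the formula in hand, bilinearity in (a) is immediate. Symmetry is the delicate point: writing $D=\alpha^*\mathfrak{D}+(g)$, one reduces the identity $\langle C,D\rangle_v=\langle D,C\rangle_v$ to
$$
\langle\mathfrak{C},\alpha_*D\rangle_{A_v}-\langle\mathfrak{D},\alpha_*C\rangle_{A_v} \;=\; \rho_{L_v}\!\bigl(g(C)/f(D)\bigr),
$$
which I would establish by combining Weil reciprocity on $X_v$ with the symmetry of the canonical polarization of $A_v$ via the theorem of the square (this is classical for $\ell\ne p$ and is built into Perrin-Riou's construction for $\ell=p$). Claim (b) is the specialization $\mathfrak{C}=0$, which is permitted precisely when $C$ is principal. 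For (c), any correspondence $T$ on $X$ induces an endomorphism $\phi=T_*$ of $A_v$ compatible with $\alpha$, and $T^\iota$ induces its dual on $A_v^\vee\iso A_v$; applying Proposition \ref{pr height}(c) to the abelian term and tracking how the decomposition $TC=\alpha^*(\phi^*\mathfrak{C})+(f_T)$ interacts with $T^\iota$ on the zero-cycle side yields the claim. For (d), both summands of (\ref{curve height def}) depend continuously on $D$: the first by Proposition \ref{pr height}(d), and the second because $d\mapsto\rho_{L_v}(f(d))$ is continuous in the $v$-adic topology on the complement of the support of $(f)$. For (e), Proposition \ref{pr height}(e) applied to $\alpha_*\N_{L'/L_v}C=\N_{L'/L_v}\alpha_*C$, together with $\rho_{L_v}(\N_{L'/L_v}f(D))\in\rho_{\Q_p}(\N_{L'/\Q_p}(L'))$, gives the desired containment.

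The final boundedness statement follows by separating cases. When $\ell\ne p$, the local symbol factors through intersection multiplicities scaled by $\log_p(\N(v))$, with denominator bounded by the fibral adjustments described in \S\ref{intersection theory} on a fixed regular model of $X_v$; this image lies in a lattice of $\Q_p$ and is therefore compact. When $\ell=p$, taking $L'=L_v$ in (e) places the image inside the compact set $c^{-1}\Z_p$. The principal obstacle in the whole proposition is the symmetry in (a): tying Weil reciprocity on $X_v$ to the symmetry property of the abelian Néron symbol coming from the canonical principal polarization is what makes (\ref{curve height def}) descend to a genuine symbol on linear equivalence classes, and is the classical ingredient that allows the curve-level symbol to be constructed from the abelian-variety symbol in the first place.
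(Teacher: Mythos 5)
Your proof is correct and follows the same overall strategy as the paper: reduce each property to the corresponding property of the abelian-variety N\'eron symbol in Proposition~\ref{pr height} via the defining formula~(\ref{curve height def}). The paper's own proof is terse (one sentence for (a)--(e), one for the compactness claim), while you spell out each reduction; in substance they agree. The one place you genuinely diverge is the final compactness claim. For $\ell=p$ you specialize~(e) at $L'=L_v$ to land the values directly in $c^{-1}\Z_p$; the paper instead appeals to finite generation of the $p$-primary part of $A(L_v)$ as a $\Z_p$-module together with the behavior on principal divisors. Both are valid, and yours is arguably slicker since it exploits a uniformity already built into Proposition~\ref{pr height}(e). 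For $\ell\ne p$ you route through the intersection-theoretic description; note that Proposition~\ref{neron to intersection} appears after Proposition~\ref{curve height} in the paper and its proof invokes items (a)--(d) of the present proposition. This is not circular, since compactness is not used there, but it does mean the compactness argument should be understood as logically last. On symmetry, both you and the paper stop short of a full argument: you gesture at Weil reciprocity plus the self-duality of the Jacobian, the paper defers to Perrin-Riou's construction. That is the right ingredient and the reduction to the identity $\langle\mathfrak{C},\alpha_*D\rangle_{A_v}-\langle\mathfrak{D},\alpha_*C\rangle_{A_v}=\rho_{L_v}(g(C)/f(D))$ is correctly set up, so no gap there beyond what the paper itself leaves implicit.
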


\begin{proof}
Properties (a)--(e) are direct consequences of the analogous properties of the
N\'eron symbol on $A$ in Proposition \ref{pr height}, except for the
symmetry (which is stated without proof in \cite{pr1}, but can be deduced
from the construction of the pairing of Proposition \ref{pr height}).
For the final claim one uses the finite generation of the $p$-primary 
part $A(L_v)$ as a $\Z_p$-module and the specified behavior on principal
divisors.
\end{proof}

\begin{Prop}\label{neron to intersection}
For any prime $v$ of $L$ with residue characteristic $\not=p$
and any degree zero divisors $C$ and $D$ on $X_v$ with disjoint support,
$$
\langle C, D\rangle_v = \log_p(\N(v))\ [C,D]
$$
where $[C,D]$ is the pairing of \S \ref{intersection theory}
for any regular, integral, proper scheme $\underline{X}$ 
over the integer ring of $L_v$ whose  generic fiber is $X_v$.
\end{Prop}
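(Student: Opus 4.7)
The plan is to invoke the uniqueness clause of Proposition \ref{pr height}—which applies since the residue characteristic $\ell$ of $L_v$ differs from $p$—by exhibiting a candidate Néron symbol on the Jacobian $A$ of $X_v$ from intersection theory on $\underline{X}$. First I would establish the local identity $\rho_{L_v}(x)=v(x)\log_p(\N(v))$ for all $x\in L_v^\times$, which holds because any continuous homomorphism $L_v^\times\map{}\Z_p$ vanishes on the units $\co_{L_v}^\times$ (the product of a finite cyclic group of order prime to $p$ and a pro-$\ell$ group); combined with $\rho_{L_v}(\pi_v)=\log_p(\N(v))$, the claim follows. Applied to the intersection-theoretic identity $[(f),D]=v(f(D))$ from \S\ref{intersection theory}, this verifies the proposition whenever $C=(f)$ is principal and, more generally, shows that both $\langle C,D\rangle_v$ and $\log_p(\N(v))\cdot[C,D]$ transform identically under principal divisor modifications of either argument.

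Next I would define a candidate Néron symbol $\Phi$ on $A$ as follows. Using the canonical embedding $\alpha:X_v\map{}A$, for an algebraically trivial divisor $\mathfrak{C}$ on $A$ and a point-by-point rational zero-cycle $d=\alpha_*D$ of degree zero on $A$ with support disjoint from $\mathfrak{C}$, set
$$\Phi(\mathfrak{C},d):=\log_p(\N(v))\cdot[\alpha^*\mathfrak{C},D].$$
Standard reductions (bilinearity combined with translation invariance) extend $\Phi$ to all such zero-cycles. Bilinearity, the principal divisor formula, and compatibility with endomorphisms (axioms (a)--(c) of Proposition \ref{pr height}) are then immediate from the corresponding properties of intersection numbers on the regular model $\underline{X}$ together with the local identity above. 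Continuity in the $v$-adic topology—the main technical point—follows from the observation that any two $v$-adically close points of $X_v(L_v)$ extend to sections of $\underline{X}$ with the same reduction to the special fiber, hence have identical intersection multiplicities with any fixed disjoint horizontal divisor.

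By the uniqueness clause of Proposition \ref{pr height}, $\Phi$ coincides with $\langle\ ,\ \rangle_{A,\rho_{L_v}}$. Substituting into the defining formula (\ref{curve height def}) for the curve pairing (writing $C=\alpha^*\mathfrak{C}+(f)$) and using the principal-divisor identity to handle the $\rho_{L_v}(f(D))$ term yields $\langle C,D\rangle_v=\log_p(\N(v))\cdot[C,D]$. The hardest step is the continuity verification for $\Phi$; the remaining axioms are translations of standard intersection-theoretic facts into the Néron symbol framework.
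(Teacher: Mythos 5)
Your approach is correct in spirit and reaches the same topological core as the paper's—namely, that a continuous $\Q_p$-bilinear pairing on $A(L_v)\times A(L_v)$ must vanish when $\ell\neq p$—but you route through the Jacobian, whereas the paper stays on the curve. Specifically, the paper shows that the intersection-theoretic pairing $\log_p(\N(v))\,[\ ,\ ]$ satisfies axioms (a)--(d) of Proposition \ref{curve height} (symmetry, bilinearity, the principal divisor formula, Hecke compatibility, $v$-adic continuity) and then argues uniqueness directly for the curve symbol: the difference of two such symbols descends (by bilinearity and the principal divisor formula) to a continuous bilinear form on $A(L_v)\times A(L_v)$, which is trivial for topological reasons. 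This avoids ever constructing a full Néron symbol on $A$.

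Your route of building a candidate $\Phi$ on $A$ and appealing to the uniqueness clause of Proposition \ref{pr height} works but has two points needing more care than you give them. First, not every rational, degree-zero zero-cycle on $A$ is of the form $\alpha_*D$ with $D$ a divisor on $X_v$: points of $A(L_v)$ need not lie in the image of $\alpha$, and $\alpha_*$ only becomes surjective after passing to linear equivalence classes. So "extending by bilinearity and translation invariance" is not a formal step; you would be \emph{imposing} translation invariance and then verifying well-definedness, and that verification is not immediate from intersection theory on $\underline{X}$. Second, translation invariance is part of axiom (a) of Proposition \ref{pr height}, and you should not describe it as "immediate" for $\Phi$; there is no natural translation action on $X_v$ inducing one on its arithmetic surface. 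The paper's formulation sidesteps both difficulties because the axioms of Proposition \ref{curve height} make no mention of translations and the uniqueness argument only needs the difference of two such symbols to factor through $A(L_v)\times A(L_v)$, which is automatic from symmetry and the principal divisor formula.
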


\begin{proof}
Using the discussion of \S \ref{intersection theory}, one can show
that the right hand side satisfies properties (a)--(d) of 
Proposition \ref{curve height}, and so it suffices to show
that these determine $\langle\ ,\ \rangle_v$ uniquely.  This is similar
to the uniqueness argument of Proposition \ref{pr height};
the difference of two such symbols would define a continuous bilinear
function $A(L_v)\times A(L_v)\map{}\Q_p$, which must be trivial for
topological reasons.
\end{proof}


\section{Intersections on modular curves}
\label{modular intersections}


Fix $s>0$ and $\sigma\in\Gal(H_s/K)$.
Let $\ell$ be a rational prime, $v$ a place of $H_s$ above $\ell$,
$F$ the completion of the maximal unramified extension of $H_{s,v}$,
$W$ the integer ring of $F$, and $\mathfrak{m}$ the maximal ideal of $W$. 
Set $W_n=W/\mathfrak{m}^{n+1}$. 
We denote by 
$X=X_0(N)_{/\Z}$ the canonical integral model of \cite{Katz-Mazur},
and set ${\underline X}=X\times_\Z W$.

\begin{Def}\label{hom notation}
Given  elliptic curves with $\Gamma_0(N)$-structure 
${\underline x}$ and ${\underline y}$
over $\Spec(W)$, we define  
$\Hom_{W_n}({\underline y},{\underline x})_{\deg(m)}$
to be the set of degree $m$ isogenies (of elliptic curves
with $\Gamma_0(N)$-structure, in the sense of \S \ref{CM}) 
$$
{\underline y}\times_W W_n\map{}
{\underline x}\times_W W_n.
$$
\end{Def}

\begin{Prop}\label{first homs}
Let ${\underline x}, {\underline y}\in {\underline X}(W)$
represent elliptic curves with $\Gamma_0(N)$-structure
over $W$, and assume that these sections 
intersect properly and reduce to regular, non-cuspidal points in the
special fiber.  Then
$$
i({\underline x},{\underline y})=\frac{1}{2}\sum_{n\ge 0}
|\Hom_{W_n}({\underline y},{\underline x})_{\deg(1)}|.
$$
\end{Prop}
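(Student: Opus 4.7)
The plan is to combine the standard local-ring description of intersection multiplicity on a regular arithmetic surface with the moduli interpretation of $\underline{X}$, treating the automorphism $[-1]$ as the source of the factor $1/2$.

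First I would reduce the intersection multiplicity to a counting statement. Because $\underline{x}$ and $\underline{y}$ reduce to regular non-cuspidal closed points, their common image $z$ lies in the regular two-dimensional locus of $\underline{X}$. Choosing a uniformizer $\pi$ of $W$ together with a relative coordinate $t$ at $z$, the sections are encoded by elements $t_x, t_y \in W$ and
$$i(\underline{x}, \underline{y}) = v(t_x - t_y) = \sum_{n \ge 0} \mathbb{1}[\,\underline{x}|_{W_n} = \underline{y}|_{W_n}\,],$$
where the bracket records equality of the $W_n$-valued points of $\underline{X}$. So the task reduces to expressing this indicator in terms of isomorphisms of the associated $\Gamma_0(N)$-structures.

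Next I would translate equality on $W_n$ via the moduli interpretation. The scheme $X_0(N)$ is only a coarse moduli space, so two elliptic curves with $\Gamma_0(N)$-structure determine the same $W_n$-point of $\underline{X}$ precisely when they are isomorphic as $\Gamma_0(N)$-structures, up to the action of the automorphism group. The hypothesis that the reductions are regular and non-cuspidal rules out the elliptic points with extra automorphisms (those with $j=0$ or $j=1728$ in the problematic residue characteristics), so the automorphism group of each $\Gamma_0(N)$-structure is exactly $\{\pm 1\}$; moreover $[\pm 1]$ is tautologically defined over any base, and rigidity of elliptic-curve automorphisms shows this is the full group of automorphisms of $\mathcal{E}_x|_{W_n}$. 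Consequently $\Hom_{W_n}(\underline{y},\underline{x})_{\deg(1)}$ is either empty (the reductions differ on $W_n$) or a torsor under $\{\pm 1\}$ (the reductions agree); in either case
$$\mathbb{1}[\,\underline{x}|_{W_n} = \underline{y}|_{W_n}\,] = \tfrac{1}{2}\,|\Hom_{W_n}(\underline{y},\underline{x})_{\deg(1)}|,$$
and summing over $n$ gives the asserted formula.

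The main obstacle, and the point that deserves the most care, is justifying the coarse-versus-fine moduli translation at the level of $W_n$-points: one must verify that under the stated regularity hypothesis $\underline{X}$ represents the $\Gamma_0(N)$-moduli functor modulo the $\{\pm 1\}$-action on a small étale neighborhood of $z$, so that lifts of $z$ to $W_n$ are in bijection with isomorphism classes of $\Gamma_0(N)$-structures lifting the one at $z$. This is where one needs to invoke the Serre-Tate/Katz-Mazur deformation theory of elliptic curves with level structure and the explicit description of $\underline{X}$ near regular non-cuspidal points, after which the counting above applies uniformly to both the ordinary and (at places away from $p$) supersingular reductions.
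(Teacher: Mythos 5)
The paper offers no argument for this statement — only a citation to \cite[Proposition III.6.1]{gz} and \cite[Theorem 4.1]{conrad} — so your sketch has to be assessed on its own merits. The overall shape (linearize $i(\underline x,\underline y)$ over the infinitesimal neighborhoods $W_n$, then reinterpret coincidence modulo $\mathfrak m^{n+1}$ via isomorphisms of $\Gamma_0(N)$-structures with an automorphism correction) is indeed what Gross--Zagier and Conrad do, but two things are genuinely off.

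First, your derivation of the factor $\tfrac12$ is incorrect. You claim that regularity and non-cuspidality of the reduction force the automorphism group to be $\{\pm1\}$. This is false: for a residue characteristic $\ell\geq 5$ with $\ell\nmid N$, the Katz--Mazur model $\underline X$ is \emph{smooth} over $W$, so \emph{every} non-cuspidal point of the special fiber is regular, including those with $j\equiv 0$ or $j\equiv 1728$, whose automorphism groups have order $6$ or $4$. Regularity of the coarse model and triviality of the automorphism group are independent conditions. Conrad's general statement carries a factor $1/|\mathrm{Aut}|$; the specialization to $\tfrac12$ in the proposition rests on the CM constraints in the paper's application (CM by $\co_{s+r}$ with $s+r\geq 1$, or $D\neq -3,-4$, combined with \'etaleness of the automorphism scheme over $W$ when the residue characteristic exceeds $3$), not on regularity. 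You also implicitly replace ``regular'' by ``smooth'' in order to write $\hat{\mathcal O}_{\underline X,z}\cong W[[t]]$ and extract a relative coordinate $t$; again this happens to hold for the Heegner points the paper cares about (they land in the smooth locus because $\ell\nmid N$ at nonsplit $\ell$, by the Heegner hypothesis), but it is not a consequence of the stated hypotheses.

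Second, the ``translation'' between equality of $W_n$-points of the coarse space and isomorphism of $\Gamma_0(N)$-structures over $W_n$ is the entire content of the theorem, not a last detail to be checked. You acknowledge it as the ``main obstacle'' but leave it unaddressed. $W_n$-points of the coarse moduli scheme $\underline X$ do not directly classify $\Gamma_0(N)$-structures over $W_n$ up to isomorphism; one must pass to a rigidified auxiliary fine moduli problem, descend the count through the automorphism action, and invoke the Serre--Tate theorem to identify deformations of the $\Gamma_0(N)$-structure over $W_n$ with deformations of the associated $\ell$-divisible group. That is several pages of careful work in \cite{conrad}, and your outline asserts the conclusion of that argument rather than supplying it.
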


\begin{proof}
This is \cite[Proposition III.6.1]{gz}, 
or \cite[Theorem 4.1]{conrad}.
\end{proof}

Now assume $\ell\not=p$ and
fix an integer $m=m_0p^r$ with $r>0$ and $(m_0,Np)=1$.
Choose an embedding $H_\infty\hookrightarrow F$ extending
$H_s\hookrightarrow F$.
Recall the notation 
$$
\bh_{s,r}=\Norm_{H_{s+r}/H_s}(h_{s+r})\hspace{1cm}
\bd_{s,r}=\Norm_{H_{s+r}/H_s}(d_{s+r})$$ 
of the introduction. For any $t\ge 0$, let $\uh_t$ 
be the Zariski closure (with the reduced subscheme structure) of 
$h_t\in X(F)$ in ${\underline X}$
and let $T_{m_0}(\ubh^\sigma_{s,r})$ be the horizontal Weil divisor
on ${\underline X}$ with generic fiber $T_{m_0}(\bh^\sigma_{s,r})$.
By the valuative criterion of properness, the closed subscheme $\uh_{s+r}$ 
has the form $\Spec(W)\map{}{\underline X}$.
Moreover, the section $\uh_{s+r}$ 
arises from a Heegner diagram defined over $W$.
Indeed, by \cite[Proposition 1.2]{cornut} or \cite[Theorems 8,9]{st}
the point $h_{s+r}\in X(H_{s+r})$
arises from a Heegner diagram over $H_{s+r}$ with good reduction above
$\ell$, and so the section $\uh_{s+r}$ represents the 
N\'eron model over $W$ of this
Heegner diagram.  Taking the quotient
of $\uh_{s+r}$ by its $p\co_{s+r-1}$-torsion, we obtain a 
Heegner diagram represented by the section 
$\uh_{s+r-1}\in\underline{X}(W)$, and so on through all lower conductors.
In particular we now have a $p$-isogeny of Heegner diagrams
defined over $W$
\begin{eqnarray}\label{Heegner square}\xymatrix{
{\uE_s\ar[r]^{\uh_s}}\ar[d]_\phi&\uE_s'\ar[d]_{\phi'}\\
\uE_{s-1}\ar[r]^{\uh_{s-1}} &\uE_{s-1}'.
}\end{eqnarray}

Although the expression for the local N\'eron symbol at $\ell\not=p$
in terms of intersection theory requires working on a regular model
(which ${\underline X}$ is not when $\ell|N$)
and modifying the divisors in questions by a fibral divisor, in our
situation these details can be ignored:

\begin{Prop}\label{height to intersections}
Suppose $\ell\not=p$ and $0\le t\le s$. Then
$$
\langle c_t, T_{m_0}(\bd_{s,r}^\sigma)\rangle_v
=
\log_p(\N(v))\cdot i(\uh_t, T_{m_0}(\ubh^\sigma_{s,r})),
$$
where the pairing on the left is the local N\'eron symbol on $X_{/H_{s,v}}$
of Proposition \ref{curve height}
and $i$ is the intersection multiplicity on ${\underline X}$ 
of \S \ref{intersection theory}.
\end{Prop}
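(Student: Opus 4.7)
The plan is to reduce the identity to Proposition \ref{neron to intersection} on a regular integral model of $X_v$ and then to verify that only the CM-to-CM intersection survives on the right-hand side.

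First, let $\pi\colon \underline{X}'\to \underline{X}\times_W \co_{H_{s,v}}$ be a minimal desingularization; this is needed only when $\ell\mid N$, as $\underline{X}$ is smooth over $\Z[1/N]$. The exceptional locus of $\pi$ lies over supersingular points of the special fiber and is therefore disjoint from the cusps and from all CM points of good reduction above $\ell$. Since $\ell\neq p$, the Heegner diagrams attached to $h_{s+r}$ and to all its lower-conductor quotients have good reduction over $W$, as observed in the discussion preceding the proposition; the same is true for the CM points appearing in $T_{m_0}(\bh_{s,r}^\sigma)$, which have CM orders of conductor divisible by $p$. Consequently the horizontal closures $\uh_t$ (for all $t\le s+r$), the cusp sections $\underline{0}$ and $\underline{\infty}$, and the components of $T_{m_0}(\ubh_{s,r}^\sigma)$ all pass through the smooth non-cuspidal locus of $\underline{X}$, and their intersection multiplicities therefore agree whether computed on $\underline{X}$ or on $\underline{X}'$.

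Second, Proposition \ref{neron to intersection} applied to $\underline{X}'$ yields
$$
\langle c_t, T_{m_0}(\bd_{s,r}^\sigma)\rangle_v = \log_p(\N(v))\cdot [c_t, T_{m_0}(\bd_{s,r}^\sigma)]_{\underline{X}'}.
$$
Writing $c_t=(h_t)-(0)$ and $\bd_{s,r}^\sigma = \Norm_{H_{s+r}/H_s}(h_{s+r}^\sigma) - [H_{s+r}{:}H_s]\cdot (\infty)$, and using the fact that $T_{m_0}(\infty)$ is supported on the cusp $(\infty)$ when $(m_0,N)=1$, the pairing $[c_t, T_{m_0}(\bd_{s,r}^\sigma)]_{\underline{X}'}$ expands bilinearly into a main term $i(\uh_t, T_{m_0}(\ubh_{s,r}^\sigma))$, three horizontal cross-terms each involving a cusp, and a fibral correction. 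Each horizontal cross-term vanishes by Step 1: CM sections reduce to non-cuspidal points, and $\underline{0}$ and $\underline{\infty}$ are disjoint sections of $\underline{X}$.

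Finally, for the fibral correction, I would exploit the fact that $[\cdot,\cdot]_{\underline{X}'}$ is independent of how the correction is distributed between the two arguments, and place it opposite $\underline{c_t}$. Since $\uh_t$ and $\underline{0}$ both meet a single distinguished irreducible component of each special fiber of $\underline{X}'$ (the ordinary component when $\ell\mid N$, the unique component otherwise), the correction can be chosen supported away from this component and so has trivial intersection with $\underline{c_t}$. The main obstacle is precisely this last step: it requires a careful analysis of the special fiber of $\underline{X}'$ over primes $\ell\mid N$ in order to identify the distinguished ordinary component carrying the CM and cuspidal sections, and to verify the intersection pattern of the fibral components needed to orthogonalize the correction against $\underline{c_t}$.
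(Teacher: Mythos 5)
Your outline matches the paper's intent (the one-line proof delegates the full argument to \cite[Proposition III.3.3]{gz}): pass to a regular model, invoke Proposition \ref{neron to intersection}, check that the three cusp cross-terms vanish because cusps and ordinary CM points reduce to distinct smooth non-supersingular points, and then control the fibral correction. But the assertion in your final step, that $\uh_t$ and $\underline{0}$ both meet a single distinguished irreducible component when $\ell\mid N$, is not correct. For $\ell\mid N$ the Heegner hypothesis gives $\epsilon(\ell)=1$; both irreducible components of the special fiber at $\ell$ are generically ordinary, $\underline{0}$ and $\underline{\infty}$ lie on two distinct components (distinguished by whether the $\ell$-part of the $\Gamma_0(N)$-structure reduces to a connected or an \'etale subgroup), and $\uh_t$ lies on the connected-kernel component precisely when the prime of $\co_t$ above $\ell$ dividing the orientation ideal is the one below $v$. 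For the remaining places $v$ above $\ell$, $\uh_t$ lies on the component carrying $\underline{\infty}$, not $\underline{0}$; there $\underline{c_t}=\uh_t-\underline{0}$ pairs nontrivially with fibral divisors, no fibral correction $D'$ can be supported off $\underline{c_t}$, and your way of killing $i(\underline{c_t},D')$ fails.

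The missing observation is that, since $(m_0,N)=1$, the correspondence $T_{m_0}$ preserves the $\ell$-part of the level structure, so every CM section appearing in $T_{m_0}(\ubh^{\sigma}_{s,r})$ lies on the same component of the special fiber as $\uh_t$, while $\underline{\infty}$ always lies on a single fixed component. Hence at each $v\mid\ell$ exactly one of the two degree-zero horizontal divisors, either $\underline{c_t}$ or the closure of $T_{m_0}(\bd^{\sigma}_{s,r})$, is automatically orthogonal to all fibral divisors: the former when $\uh_t$ shares a component with $\underline{0}$, the latter when it shares one with $\underline{\infty}$. In either case the fibral correction drops out regardless of which side of the pairing it is placed on, and $[c_t,T_{m_0}(\bd^{\sigma}_{s,r})]$ reduces to the main term $i(\uh_t, T_{m_0}(\ubh^\sigma_{s,r}))$. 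The architecture of your proof is sound, but the closing step needs this two-case analysis; as written it would fail at half the places above each $\ell\mid N$.
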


\begin{proof}
As in \cite[Proposition III.3.3]{gz}, together with Proposition 
\ref{neron to intersection}.
\end{proof}

\begin{Rem}
In order to make sense of 
$i(\uh_t, T_{m_0}(\ubh^\sigma_{s,r}))$ when
$\ell|N$ we need to justify why the prime Weil divisors occuring
in $T_{m_0}(\ubh^\sigma_{s,r})$ are  locally principal, 
so that $T_{m_0}(\ubh^\sigma_{s,r})$ may be viewed
as a Cartier divisor.  The geometric points of $T_{m_0}(\bh^\sigma_{s,r})$
all occur in the support of $T_m (h_s^\sigma)$.  If $\ell|N$
then these points represent Heegner diagrams 
which are prime-to-$\ell$ isogenous to $h_s^\sigma$, and so are all
defined over $F$.  Arguing as in \cite[Corollary 2.7]{conrad} (Conrad's
$p$ is our $\ell$),
the Zariski closures of these points on ${\underline X}$
are sections to the structure map ${\underline X}\map{} \Spec(W)$
and lie in the smooth locus.  In particular,
the associated ideal sheaves are locally free of rank one.
\end{Rem}

\begin{Prop}\label{split primes}
Suppose $\ell\not=p$ and $\epsilon(\ell)=1$.  Then  for all $0\le t\le s$,
$\langle c_t, T_{m_0}(\bd_{s,r}^\sigma)\rangle_v=0$,
where the pairing $\langle\ ,\ \rangle_v$ is as in Proposition 
\ref{height to intersections}.
\end{Prop}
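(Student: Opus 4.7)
The plan is to combine Proposition \ref{height to intersections}, which identifies the local N\'eron symbol at $v$ with $\log_p(\N(v)) \cdot i(\uh_t, T_{m_0}(\ubh_{s,r}^\sigma))$, with Proposition \ref{first homs}, which writes each contribution $i(\uh_t, \uh')$ (for $\uh'$ a horizontal component of $T_{m_0}(\ubh_{s,r}^\sigma)$) as a count of degree-one isogenies of $\Gamma_0(N)$-structures in the infinitesimal neighborhoods $W_n$.  It then suffices to rule out such an isomorphism over the residue field $W_0$, as an isomorphism over $W_n$ restricts to one there.

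As observed in the paragraph following (\ref{fourier 2}), the geometric points $h'$ in the support of $T_{m_0}(\bh_{s,r}^\sigma)$ represent elliptic curves with CM by an order $\co'$ of $p$-conductor exactly $p^{s+r}$, while the elliptic curve underlying $h_t$ has CM by $\co_t$ of $p$-conductor $p^t$.  Since $r>0$ and $t\le s$ these two orders are always distinct, and this discrepancy is what I want to exploit in the special fiber.

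The heart of the argument -- and the main technical point -- is a canonical-lift statement: because $\epsilon(\ell)=1$ and $\ell\neq p$, both Heegner diagrams have ordinary reduction, and any such CM elliptic curve $E/W$ is the Serre-Tate canonical lift of its reduction, so $\End(\bar E)$ coincides with the CM order of $E$.  The cleanest justification passes through the Serre-Tate parameter: the condition for an order $\co=\Z+c\co_K$ to act on a deformation with parameter $q\in 1+\mathfrak{m}$ reduces to $q^{c(\omega-\bar\omega)}=1$, where $\omega$ generates $\co_K$; since $\omega-\bar\omega$ is an $\ell$-adic unit (as $\ell\nmid D$) while $\ell\nmid c$ (the conductors here are $p$-powers), this forces $q=1$.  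Applied to $h_t$ and to $h'$ this yields $\End(\bar E_t)=\co_t\ne\co'=\End(\bar E')$, so the reductions are non-isomorphic as elliptic curves over $W_0$.  A fortiori no isomorphism of $\Gamma_0(N)$-structures between them can exist in the special fiber, each $i(\uh_t,\uh')$ vanishes, and the proposition follows.
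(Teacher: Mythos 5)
Your overall architecture is the same as the paper's: reduce via Proposition \ref{height to intersections} to showing the intersection number vanishes, then via Proposition \ref{first homs} to showing the special fibers of $\uh_t$ and of each component of $T_{m_0}(\ubh_{s,r}^\sigma)$ are non-isomorphic, and detect this by comparing CM orders using Serre--Tate theory. But there is a genuine gap in the key technical claim. You assert that every CM elliptic curve $E/W$ arising here is the Serre--Tate canonical lift of its reduction, so that $\End(\bar E)$ equals the CM order of $E$ on the nose; and you justify this by saying ``$\ell\nmid c$ (the conductors here are $p$-powers).'' That parenthetical is false. The geometric points of $T_{m_0}(\bh_{s,r}^\sigma)$ represent curves whose CM order has conductor dividing $m_0 p^{r+s}$, and nothing prevents $\ell\mid m_0$ (when $\ell\nmid N$, $\ell\mid m_0$ is compatible with $(m_0,Np)=1$). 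When $\ell$ divides the conductor, your Serre--Tate parameter condition $q^{c(\omega-\bar\omega)}=1$ no longer forces $q=1$, the curve is not a canonical lift, and $\End(\bar E')$ may strictly contain the generic CM order $\co'$.

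The conclusion is still reachable, but you must weaken the claim, which is exactly what the paper does: Serre--Tate shows that the CM order $\co$ of $E$ is the intersection in $K\otimes\Q_\ell$ of $\End(\bar E)$ with $\co\otimes\Z_\ell$, so passing to the special fiber can only change the $\ell$-part of the conductor. Hence $\ord_p(\mathrm{cond}(\End(\bar E')))=\ord_p(\mathrm{cond}(\co'))=s+r$, while $\ord_p(\mathrm{cond}(\End(\bar E_t)))=t\le s<s+r$ (for $\uh_t$ your canonical-lift reasoning \emph{is} valid, since its conductor $p^t$ really is prime to $\ell$). These $p$-conductor valuations disagree, so the special fibers carry non-isomorphic CM orders and cannot be isomorphic, which is all that is needed. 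So: retain the structure, but replace the equality ``$\co'=\End(\bar E')$'' with the invariance of the $p$-part of the conductor under reduction.
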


\begin{proof}
By Proposition \ref{height to intersections} we must show that
$i(\uh_t, T_{m_0}(\underline{\bh}_{s,r}^\sigma))=0$.
The claim is unchanged if we replace $W$ by the integer ring of
a finite extension of $F$.
Doing so, we assume that the divisor $T_{m_0}({\bh}_{s,r}^\sigma)$
is defined point-by-point over $F$ and that
the horizontal divisor $T_{m_0}(\bd_{s,r}^\sigma)$ on 
$\underline{X}$ is a sum of sections to the structure map,
 each of which 
represents a Heegner diagram over $W$ whose conductor divides $mp^s$
and has exact valuation $s+r>t$ at $p$.  
Let $\underline{x}$ be one such Heegner diagram, and let $\co$ and $\co'$
be the endomorphism rings of  $\underline{x}$ and its closed fiber,
respectively. These are orders in $K$, 
as $\underline{x}$ has ordinary reduction, and $\co\subset\co'$.
By the Serre-Tate theorem, $\co$ is the intersection (in
$K\otimes \Q_\ell$) of  $\co'$ and $\co\otimes\Z_\ell$,
therefore 
$$
\ord_p(\mathrm{cond}(\co'))=\ord_p(\mathrm{cond}(\co))
=s+r>t.
$$
The same argument shows that the valuation at $p$ of the conductor of 
the CM order of the special fiber of $\uh_t$ is $t$, and so
the Heegner diagram $\uh_t$ is distinct in the special fiber from 
all Heegner diagrams appearing in  
$T_{m_0}({\underline \bh}_{s,r}^\sigma)$.  By Proposition \ref{first homs},
$i(\uh_t, T_{m_0}(\underline{\bh}_{s,r}^\sigma))=0$.
\end{proof}


\section{Nonsplit primes away from $p$}
\label{nonsplit}


In this section we examine the local N\'eron pairings between Heegner 
points at places lying above rational primes $\not=p$ which are
nonsplit in $K$.  The methods are based on those of 
Chapter III of \cite{gz}, and this portion of Gross and Zagier's
work has been reworked and rewritten by Conrad \cite{conrad}
with the addition of considerably more detail.

Keep the notation of \S \ref{modular intersections}, and assume
$\ell\not=p$ is nonsplit in $K$.  In particular $\ell\nmid N$.
Fix a prime $v$ of $H_s$ (with $s>0$, as always) above $\ell$ and 
an integral $\co_s$-ideal $\fa$ of norm prime to $D\ell p$ whose class in
$\Pic(\co_s)$ represents $\sigma$ under the Artin map.
We denote by $\fl$ the unique prime of $\co_s$ above $\ell$
(we sometimes let $\fl$ denote the $\co_K$-ideal $\fl\co_K$; a mild abuse
of notation).
If $\epsilon(\ell)=-1$ then $\fl=\ell\co_s$ is trivial in $\Pic(\co_s)$,
$\fl$ splits completely in $H_s$,
and $v$ has absolute residue degree $2$.
If $\epsilon(\ell)=0$
then $\fl^2=\ell\co_s$ and $\fl$ is not a principal ideal of 
$\co_s$ (if $D$ is not prime then $\fl\co_K$ is not principal,
if $D=-\ell$ is prime then $\fl=(\sqrt{D}\cap\co_s)$ is not
principal since $s>0$).  Thus when $\epsilon(\ell)=0$,
$\fl$ has order $2$ in $\Pic(\co_s)$ and again $v$ has
residue degree $2$.


\subsection{Intersection via $\Hom$ sets}



\begin{Prop}\label{intersections}
For any integer $m=m_0p^r$ with $(m_0,Np)=1$,
\begin{align*}
\langle c_s, T_{m_0}(\bd_{s,r+2}^\sigma)\rangle_v &=
\log_p(\ell)\sum_{n\ge0}
\Big(
|\Hom_{W_n}( {\uh}_s^\fa, {\uh}_s)_{\deg(mp^2)}|-
|\Hom_{W_n}( {\uh}_{s-1}^\fa, {\uh}_s)_{\deg(mp)}|
\Big) \\
\langle c_{s-1}, T_{m_0}(\bd_{s,r+1}^\sigma)\rangle_v &=
\log_p(\ell)\sum_{n\ge0}\Big(
|\Hom_{W_n}( {\uh}_{s}^\fa, {\uh}_{s-1})_{\deg(mp)}|
-|\Hom_{W_n}( {\uh}_{s-1}^\fa, {\uh}_{s-1})_{\deg(m)}|\Big)
\end{align*}
where $\langle\ ,\ \rangle_v$ is the local N\'eron symbol on $X_{/H_{s,v}}$
of Proposition \ref{curve height}, and the $\Hom$ sets are those of 
Definition \ref{hom notation}.
\end{Prop}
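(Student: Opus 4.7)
The plan is to express each local N\'eron symbol as an intersection multiplicity on $\underline{X}$ and then count intersections via isogenies of Heegner diagrams, following \cite[Chapter III]{gz} and \cite{conrad}. Applying Proposition \ref{height to intersections} converts the left-hand sides into
$$
\langle c_t, T_{m_0}(\bd_{s,r'}^\sigma)\rangle_v = \log_p(\N(v))\cdot i(\uh_t, T_{m_0}(\ubh_{s,r'}^\sigma)).
$$
Since $\ell$ is nonsplit in $K$ (so in particular $\ell\nmid N$ by the Heegner hypothesis), the prime $v$ has absolute residue degree $2$, giving $\log_p(\N(v))=2\log_p(\ell)$; this factor of $2$ will cancel the $\frac{1}{2}$ in Proposition \ref{first homs}, leaving the advertised $\log_p(\ell)$.

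Next, the Euler system relations of \S\ref{Hecke action} (which commute with $T_{m_0}$, since $(m_0,p)=1$, and with $\sigma$) give
\begin{align*}
T_{m_0}(\bh_{s,r+2}^\sigma) &= T_{mp^2}(h_s^\sigma) - T_{mp}(h_{s-1}^\sigma),\\
T_{m_0}(\bh_{s,r+1}^\sigma) &= T_{mp}(h_s^\sigma) - T_m(h_{s-1}^\sigma),
\end{align*}
so each pairing decomposes into a difference of two intersection multiplicities of the shape $i(\uh_t, T_k(\uh_{t'}^\sigma))$. As in the proof of Proposition \ref{split primes}, the CM conductors of the points supporting $T_k(\uh_{t'}^\sigma)$ are controlled by $t'$ and $\ord_p(k)$; one verifies in each of the four resulting cases that $\uh_t$ and $T_k(\uh_{t'}^\sigma)$ have disjoint generic support, and since $\ell\nmid N$ all relevant sections reduce into the smooth locus of $\underline{X}$. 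Proposition \ref{first homs} then applies to yield
$$
i(\uh_t, T_k(\uh_{t'}^\sigma)) = \frac{1}{2}\sum_{n\ge 0}\sum_{\uz\in T_k(\uh_{t'}^\sigma)} |\Hom_{W_n}(\uz,\uh_t)_{\deg(1)}|.
$$

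The final step is the bijection
$$
\bigsqcup_{\uz\in T_k(\uh_{t'}^\sigma)} \Hom_{W_n}(\uz,\uh_t)_{\deg(1)} \;\longleftrightarrow\; \Hom_{W_n}(\uh_{t'}^\sigma,\uh_t)_{\deg(k)},
$$
obtained by composing a parameterizing degree $k$ isogeny $\uh_{t'}^\sigma\to\uz$ with the isomorphism $\uz\map{\sim}\uh_t$. Serre's construction and the Main Theorem of Complex Multiplication from \S\ref{CM} then identify $\uh_{t'}^\sigma$ with the Serre twist $\uh_{t'}^\fa$, since $\fa$ represents $\sigma$ under the Artin symbol. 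Substituting back and collecting the factors of $2$ yields both claimed formulas. The main obstacle is the bijection above: because $X_0(N)$ is not a fine moduli space, one must verify at the level of $W_n$-deformations (via Serre-Tate) that the multiplicity of each $\uz$ in the Hecke divisor $T_k(\uh_{t'}^\sigma)$ agrees with the number of distinct degree $k$ isogenies $\uh_{t'}^\sigma\to\uz$ of $\Gamma_0(N)$-structures, including the non-cyclic ones factoring through multiplication-by-$d$ for $d^2\mid k$.
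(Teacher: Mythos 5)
Your plan breaks down at step 3. You rewrite
$$T_{m_0}(\bh_{s,r+2}^\sigma) = T_{mp^2}(h_s^\sigma) - T_{mp}(h_{s-1}^\sigma)$$
and then claim each of the resulting four intersection numbers $i(\uh_t, T_k(\uh_{t'}^\sigma))$ is separately defined because the supports are disjoint.  That disjointness fails in general, and badly.  For example with $t=t'=s$ and $k = m_0p^{r+2}$, the support of $T_{m_0p^{r+2}}(h_s^\sigma)$ contains components of CM conductor anywhere in the range $p^{s-r-2}$ up to $p^{s+r+2}$ (the Hecke operator does not move the conductor in only one direction), so for suitable $\sigma$ --- notably $\sigma=1$ --- the divisor $T_{m_0p^{r+2}}(h_s^\sigma)$ contains $h_s$ itself in its support.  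Then $i(\uh_s, T_{mp^2}(\uh_s^\sigma))$ is simply not defined as an intersection number, and the naive decomposition collapses.  This is precisely the problem the paper is designed to avoid: it is explained in \S\ref{the proof} immediately after equation (\ref{fourier 2}) that the point of working with $\bd_{s,r}=\Norm_{H_{s+r}/H_s}(d_{s+r})$ rather than $T_{mp^2}(d_s^\sigma)$ is that all geometric points in the support of $T_{m_0}(\bh_{s,r})$ have CM order whose conductor has $p$-adic valuation \emph{exactly} $s+r$, which forces disjointness from $h_t$ (whose conductor has valuation $t\le s$) and renders Gross--Zagier's ``intersection theory with tangent vectors'' unnecessary.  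Your approach reintroduces precisely the degeneracy that the regularized divisors were built to eliminate.

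The correct route, and the one the paper follows, is to decompose $T_{m_0}(\ubh^\sigma_{s,r+2})$ \emph{directly} as a sum of conductor-$p^{s+r+2}$ Heegner sections,
$$T_{m_0}(\ubh^\sigma_{s,r+2}) = \sum_\fb \sum_C \uh^{\fa\fb}_{s+r+2/C},$$
(with $\fb$ running over kernel classes and $C$ over order $m_0$ subgroups), use Proposition \ref{first homs} to express $i(\uh_s, \uh^{\fa\fb}_{s+r+2/C})$ as a sum of $\Hom_{W_n}$-cardinalities of degree one, reassemble these into $\Hom_{W_n}(\uh^{\fa\fb}_{s+r+2},\uh_s)_{\deg(m_0)}$, and then prove the key inclusion-exclusion identity
$$|\Hom_{W_n}( {\uh}_{s}^\fa, {\uh}_{s})_{\deg(mp^2)}|
= |\Hom_{W_n}( {\uh}_{s-1}^\fa, {\uh}_{s})_{\deg(mp)}|
+\sum_{\fb}|\Hom_{W_n}(\uh^{\fa\fb}_{s+r+2}, \uh_s)_{\deg(m_0)}|$$
by analyzing which order-$p^{r+2}$ subgroups of $\uh_s^\fa$ arise as kernels of isogenies factoring through $\phi^\fa:\uh_s^\fa\to\uh_{s-1}^\fa$ versus through a dual isogeny from $\uh_{s+r+2}^{\fa\fb}$.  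The subtlety you correctly flagged about $X_0(N)$ not being a fine moduli space and the possible appearance of non-cyclic isogenies is handled in the $\ell\mid m_0$ case via \cite[\S 6]{conrad}, but it is secondary to the disjointness issue.  Your $\log_p(\N(v)) = 2\log_p(\ell)$ bookkeeping and the observation that the $\tfrac12$ in Proposition \ref{first homs} cancels this factor are correct.
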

\begin{proof}
We will prove the first equality.  The proof of the second involves
only a change of subscripts.

First consider the easy case where $(\ell,m_0)=1$.  Then the divisor
$T_{m_0}(\bh^\sigma_{s,r+2})$ on $\underline{X}_{/F}$, (recall that $F$ is
the completion of the maximal unramified extension of $H_{s,v}$, $W$
is its integer ring, and $\underline{X}=X_0(N)_{/W}$) 
is a sum of sections to the structure map.  
Hence the same is
true of the horizontal divisor $T_{m_0}(\ubh_{s,r+2}^\sigma)$
on $\underline{X}$, and each section represents a Heegner diagram over
$\Spec(W)$.  Namely, if we fix an extension of $\sigma$ to 
$\Gal(H_{s+r+2}/K)$ and an ideal $\fa$ of $\co_{s+r+2}$ representing
this extension, then 
\begin{equation}\label{easy hom}
T_{m_0}(\ubh^\sigma_{s,r+2})=
\sum_{\fb}\sum_C \uh^{\fa\fb}_{s+r+2/C}
\end{equation}
where $\fb$ runs over classes in $\Pic(\co_{s+r+2})$ which are trivial
in $\Pic(\co_s)$, $C$ runs over the order $m_0$-subgroup schemes
of the Heegner diagram $\uh^{\fa\fb}_{s+r+2}$ over $\Spec(W)$
and the subscript $/C$ means the quotient by $C$
(which makes sense since $(m_0,N)=1$). Since $\ell$ does not divide
$m_0$, each $C$ is \'etale (in fact constant), 
determined uniquely by its reduction 
to $W_n$ for any $n$, and the decomposition 
(\ref{easy hom}) holds over $W_n$.  By Proposition \ref{first homs}
\begin{eqnarray*}
i(\uh_s, T_{m_0}(\ubh^\sigma_{s,r+2}))&=& 
\sum_{\fb}\sum_C i(\uh_s,\uh^{\fa\fb}_{s+r+2/C})\\
&=&\frac{1}{2}\sum_n\sum_{\fb}\sum_C 
|\Hom_{W_n}(\uh^{\fa\fb}_{s+r+2/C}, \uh_s)_{\deg(1)}|\\
&=&\frac{1}{2}\sum_n\sum_{\fb}
|\Hom_{W_n}(\uh^{\fa\fb}_{s+r+2}, \uh_s)_{\deg(m_0)}|,
\end{eqnarray*}
and by Proposition \ref{height to intersections} the first
equality of Proposition \ref{intersections} follows once we show 
\begin{equation}
\label{hom split easy}
|\Hom_{W_n}( {\uh}_{s}^\fa, {\uh}_{s})_{\deg(mp^2)}|
= |\Hom_{W_n}( {\uh}_{s-1}^\fa, {\uh}_{s})_{\deg(mp)}|
+\sum_{\fb}|\Hom_{W_n}(\uh^{\fa\fb}_{s+r+2}, \uh_s)_{\deg(m_0)}|.
\end{equation}

The $p^{r+2}$-torsion on $\uh_s^\fa$ is constant as a group scheme,
and so the kernel of any degree $mp^2$ isogeny
$f:\uh^\fa_s\map{}\uh_s$ over $W_n$
determines an order $p^{r+2}$-subgroup
of $\uh_s^\fa(W)$.    By the Euler system relations of 
\S \ref{Hecke action},
every such subgroup is either the kernel of a map which factors through
$\phi^\fa:\uh_s^\fa\map{}\uh_{s-1}^\fa$, 
or is the kernel of the dual isogeny to
$\phi^{\fa\fb}\circ\cdots\circ\phi^{\fa\fb}:
\uh_{s+r+2}^{\fa\fb}\map{}\uh_s^\fa$ for some choice of $\fb$,
and the two cases are mutually exclusive.
Thus $f$ has one of the two forms
$$
\uh_s^\fa\map{\phi^\fa}\uh_{s-1}^\fa\map{\psi}\uh_s
\hspace{2cm}
\uh_s^\fa\map{(\phi^{\fa\fb}\circ\cdots\circ \phi^{\fa\fb})^\vee }
\uh_{s+r+2}^{\fa\fb}\map{\psi}\uh_s
$$
where  $\psi$ has degree either $mp$ or $m_0$ (respectively).
The equality (\ref{hom split easy}) follows.

Now consider the case where $\ell$ divides $m_0$.  This is considerably more
involved, but nearly all of what we need is covered by the generality of 
\cite[\S 6]{conrad} (which is based on \cite[III \S 4--6]{gz}), to which 
we refer the reader for the proof of (\ref{hard homs}) below.
Write $m_0=m_1\ell^t$ with $(\ell,m_1)=1$.
As above, the divisor $T_{m_1}(\ubh^\sigma_{s,r+2})$ on $\underline{X}$
is a sum of sections, each of which represents a Heegner diagram over
$\Spec(W)$, and we denote by $Z$ the set of such sections
$$Z=\{\uh^{\fa\fb}_{s+r+2/C}\mid \fb\in\mathrm{Ker}(\Pic(\co_{s+r+2})
\map{}\Pic(\co_s)) \}
$$
where $C$ runs over the order $m_1$ subgroup schemes of 
$\uh^{\fa\fb}_{s+r+2}$.  For each $\uz\in Z$, one has the expected 
(but much more subtle) equality
\begin{eqnarray}\nonumber
i(\uh_s,T_{m_0}(\uh^\fa_{s+r+2}))&=&
\sum_{\uz\in Z}i(\uh_s,T_{\ell^t}(\uz))\\\label{hard homs}
&=&\frac{1}{2}\sum_{\uz\in Z}\sum_{n\ge 0}
|\Hom_{W_n}(\uz, \uh_s)_{\deg(\ell^t)}|\\
\nonumber &=&\frac{1}{2}\sum_n\sum_{\fb}
|\Hom_{W_n}(\uh^{\fa\fb}_{s+r+2}, \uh_s)_{\deg(m_0)}|.
\end{eqnarray}
With this in hand, the remainder of the proof is exactly as in the
case $(\ell,m_0)=1$.
\end{proof}


\subsection{Inclusion-Exclusion}


Our goal is, for any positive 
integer $m$ with $(m,N)=1$, to express the sum over $n$ of
\begin{eqnarray}\label{inclusion}\lefteqn{
|\Hom_{W_n}( {\uh}_s^\fa, {\uh}_s)_{\deg(mp^2)}|
-|\Hom_{W_n}( {\uh}_{s-1}^\fa, {\uh}_s)_{\deg(mp)}| } \hspace{1cm}\\  \nonumber
& &
-|\Hom_{W_n}( {\uh}_{s}^\fa, {\uh}_{s-1})_{\deg(mp)}|
+|\Hom_{W_n}( {\uh}_{s-1}^\fa, {\uh}_{s-1})_{\deg(m)}|
\end{eqnarray}
as a sum over elements in the quaternion algebra
$B=\End_{W_0}(\uh_s)\otimes_{\Z}\Q$.

\begin{Lem}
Base change to the   fiber induces a degree preserving 
injection 
$$\Hom_{W_n}( {\uh}_s^\fa, {\uh}_s)\map{}\Hom_{W_0}( {\uh}_s^\fa, {\uh}_s),$$
and similarly for the other $\Hom$ sets occuring in (\ref{inclusion}).
\end{Lem}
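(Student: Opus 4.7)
The plan is to prove two assertions: (i) base change $\Hom_{W_n}(\uh_s^\fa,\uh_s) \to \Hom_{W_0}(\uh_s^\fa,\uh_s)$ is injective, and (ii) it preserves degree. The proof strategy would be identical for each of the four $\Hom$ sets appearing in (\ref{inclusion}), since only the source/target diagrams change.

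For the degree-preservation, the plan is to invoke the fact that the degree function on isogenies of elliptic schemes is locally constant on the base. Since $\Spec(W_n)$ is connected (even local), the degree is constant along each $W_n$-family, so the generic-fiber degree equals the special-fiber degree. The $\Gamma_0(N)$-structure plays no role in this step.

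For injectivity, the key input is rigidity of morphisms between abelian schemes: a homomorphism of abelian schemes over a connected base that vanishes on some fiber is identically zero (a standard fact; see e.g.\ \cite{conrad}, or Mumford's treatment for abelian varieties extended in the obvious way). Applied to the difference $f_1 - f_2$ of two elliptic-curve isogenies $\uh_s^\fa \otimes W_n \to \uh_s \otimes W_n$ that agree on the special fiber, this difference is a group-scheme homomorphism vanishing on $W_0$, hence zero. Thus $f_1 = f_2$. A Heegner-diagram morphism is a pair of isogenies $(f,f')$ making a square commute and identifying the $\Gamma_0(N)$-level structures; applying the rigidity statement to each of the two component isogenies shows that if the pair reduces to the same pair over $W_0$, then it was equal to start with. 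Hence the map on Heegner-diagram $\Hom$ sets is injective.

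The only delicate point here is the rigidity of morphisms of abelian (or elliptic) schemes, but since this is a standard result used throughout \cite{conrad} and \cite{gz}, it can simply be cited. I do not expect any serious obstacle; the lemma is essentially a deformation-theoretic remark setting up the subsequent inclusion-exclusion computation in $B = \End_{W_0}(\uh_s) \otimes_\Z \Q$.
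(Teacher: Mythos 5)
Your proof is correct. The paper disposes of this lemma by citing \cite[Lemma 2.1(2)]{conrad} or \cite[Proposition VI.2.4(2)]{Goren}; the argument you supply --- Mumford's rigidity lemma applied to the difference of two lifts that agree over $W_0$ (noting the factoring section must be the zero section since the difference is a homomorphism), local constancy of degree over the connected Artinian base, and component-by-component treatment of the $\Gamma_0(N)$-structure --- is precisely the standard proof of those cited results, so the approach is essentially the same.
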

\begin{proof}
This is \cite[Lemma 2.1(2)]{conrad} or \cite[Proposition VI.2.4(2)]{Goren}.
\end{proof}

The isogeny $\phi$ induces injections
$$
\Hom_{W_n}( {\uh}_{s-1}^\fa, {\uh}_s)
\map{\circ\phi^\fa}\Hom_{W_n}(\uh_s^\fa,\uh_s)
\map{}\Hom_{W_0}(\uh_s^\fa,\uh_s)
$$
$$
\Hom_{W_n}( {\uh}_{s}^\fa, {\uh}_{s-1})
\map{\phi^\vee\circ}\Hom_{W_n}(\uh_s^\fa,\uh_s)
\map{}\Hom_{W_0}(\uh_s^\fa,\uh_s)
$$
whose images we denote by $L_n$ and $L^\vee_n$, respectively.
We also define $M_n$ to be the image of the injective composition
$$\Hom_{W_n}( {\uh}_{s-1}^\fa, {\uh}_{s-1})\map{}
\Hom_{W_n}(\uh_s^\fa,\uh_s)\map{}
\Hom_{W_0}(\uh_s^\fa,\uh_s)
$$
where the first arrow is given by
$f\mapsto \phi^\vee\circ f\circ \phi^\fa$. 
Clearly $M_n\subset L_n\cap L^\vee_n$.  The scheme-theoretic kernels
$$\ker\big(\phi: \uE_s\map{}\uE_{s-1}\big)
\hspace{1cm}
\ker\big(\phi^\fa: \uE_s^\fa\map{}\uE_{s-1}^\fa\big)
$$
are constant group schemes of order $p$ over $W$. We define
$$C=(\ker\ \phi)(W_0)\hspace{1cm}
C^\fa=(\ker\ \phi^\fa)(W_0).$$

\begin{Def}
We will say that $f\in \Hom_{W_n}(\uh_s^\fa,\uh_s)$ is 
\emph{stable} if the restriction of $f$ to the fiber
$f_0:\uE_s^\fa(W_0)\map{}\uE_s(W_0)$ takes $C^\fa$ into $C$.
We will say that $f$ is \emph{unstable} otherwise, and make similar
definitions for maps from $\uh_s$ to itself.  
If $Z\subset \Hom_{W_n}(\uh_s^\fa,\uh_s)$ is any subset, we will
write $Z^{\mathrm{stable}}$ and $Z^{\mathrm{unstable}}$
for the subsets of stable and unstable elements of $Z$.
\end{Def}

\begin{Lem}\label{IEi}
Suppose $m$ is any positive integer with  $(m,N)=1$.
Base change to the   fiber identifies the stable
elements of degree $mp^2$ in $\Hom_{W_n}(\uh_s^\fa,\uh_s)$  
with the degree $mp^2$ elements of $L_n\cup L^\vee_n$.
\end{Lem}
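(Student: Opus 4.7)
The plan is to prove the two inclusions separately.

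For the easy inclusion $(L_n\cup L_n^\vee)_{\deg(mp^2)}\subset$ (stable), observe that any $f\in L_n$ arises as $h\circ\phi^\fa$ for some $h\in\Hom_{W_n}(\uh_{s-1}^\fa,\uh_s)$, hence already lies in $\Hom_{W_n}(\uh_s^\fa,\uh_s)$, and its reduction kills $C^\fa=\ker(\phi^\fa)(W_0)$ by construction, so $f$ is stable. For $f=\phi^\vee\circ g\in L_n^\vee$, the image $g_0(C^\fa)$ lies in $\uE_{s-1}[p]$ since $C^\fa$ is $p$-torsion and $g_0$ is a group homomorphism, and the relation $\phi\circ\phi^\vee=[p]$ together with $\ell\ne p$ (so $\uE_{s-1}[p]$ has order $p^2$) gives $\phi^\vee_0(\uE_{s-1}[p])=\ker(\phi)(W_0)=C$, yielding $f_0(C^\fa)\subset C$.

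For the reverse inclusion, fix a stable $f\in\Hom_{W_n}(\uh_s^\fa,\uh_s)$ of degree $mp^2$ and let $K$ denote the $p$-primary part of $\ker(f_0)$. Since $(m,p)=1$, $K$ has order exactly $p^2$, and since $\uE_s^\fa[p]$ has order $p^2$ (because $\ell\ne p$), either $K=\uE_s^\fa[p]$ or $K$ is cyclic of order $p^2$ with a unique order-$p$ subgroup $C'\subset\uE_s^\fa[p]$. If $C^\fa\subset K$ (that is, $K=\uE_s^\fa[p]$, or $K$ is cyclic with $C'=C^\fa$), then $f_0$ vanishes on $\ker(\phi^\fa)(W_0)$; since $\ker\phi^\fa$ is an \'etale group scheme over $W$, this factorization lifts uniquely to $W_n$, placing $f\in L_n$.

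The essential case is $K$ cyclic with $C'\ne C^\fa$. Then $C^\fa\not\subset\ker(f_0)$, so $f_0(C^\fa)$ has order $p$ and stability forces $f_0(C^\fa)=C$. On the other hand $C'\subset\ker(f_0)$ gives $f_0(C')=0$, and $\uE_s^\fa[p]=C^\fa\oplus C'$ forces $f_0(\uE_s^\fa[p])=C=\ker(\phi)(W_0)$. Hence $\phi_0\circ f_0$ vanishes on $\uE_s^\fa[p]$ and factors as $p\,g_0$ for a unique $g_0\in\Hom_{W_0}(\uE_s^\fa,\uE_{s-1})$.

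The main obstacle is lifting this factorization to $W_n$ to conclude $f\in L_n^\vee$. Because $\ell\ne p$, multiplication by $p$ is an automorphism of each relevant $\ell$-divisible group, so $g_0[\ell^\infty]=[p]^{-1}\circ(\phi f)_0[\ell^\infty]$; since $(\phi f)[\ell^\infty]$ and $[p]^{-1}$ both lift to $W_n$, so does $g_0[\ell^\infty]$. The Serre-Tate theorem then lifts $g_0$ to some $g\in\Hom_{W_n}(\uh_s^\fa,\uh_{s-1})$ with $\phi f=pg$ over $W_n$; compatibility with the $\Gamma_0(N)$-structures is inherited from that of $\phi$ and $f$. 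Setting $h=f-\phi^\vee g\in\Hom_{W_n}(\uh_s^\fa,\uh_s)$, the identity $\phi h=\phi f-\phi\phi^\vee g=pg-pg=0$ forces $h$ to factor through the \'etale group scheme $\ker\phi=C$, and connectedness of $\uE_s^\fa$ gives $h=0$, so $f=\phi^\vee g\in L_n^\vee$.
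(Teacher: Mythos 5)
Your argument is correct and follows essentially the same strategy as the paper's: split the stable elements according to whether $f_0(C^\fa)=0$ or $f_0(C^\fa)=C$, and use the Serre--Tate theorem to lift the relevant factorization from $W_0$ to $W_n$. The differences are cosmetic. In the easy direction, for $L_n^\vee\Rightarrow\text{stable}$ you compute directly from $\phi\circ\phi^\vee=[p]$ that $\phi_0^\vee(\uE_{s-1}[p])\subset C$, whereas the paper gets the same inclusion $f_0(\uE_s^\fa[p])\subset C$ in one line from the Weil pairing and $f_0^\vee(C)=0$. In the hard direction, when $f_0(C^\fa)=C$, you obtain $f_0(\uE_s^\fa[p])=C$ by making the structure of $\ker(f_0)[p^\infty]$ explicit ($K$ cyclic with $C'\ne C^\fa$, hence $\uE_s^\fa[p]=C^\fa\oplus C'$), while the paper deduces it more briskly from $p^2\mid\deg f$; and you then factor $\phi\circ f$ through $[p]$, lift via Serre--Tate, and show $f-\phi^\vee g$ is killed by $\phi$ (hence zero by connectedness and \'etaleness of $\ker\phi$), whereas the paper factors $f_0^\vee$ through $\phi_0$ and lifts, which is slightly shorter. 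Your Case A lift via \'etaleness of $\ker\phi^\fa$ is also fine (the reduction map on $\uE_s(W_n)[p]$ is injective since $\uE_s[p]$ is \'etale over $W_n$), though the paper's Serre--Tate phrasing is the more uniform choice since it is what is used again in Case B. Your remark about compatibility with $\Gamma_0(N)$-structures is the right point to flag and is settled as you indicate because $(p,N)=1$.
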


\begin{proof}
Fix $f\in\Hom_{W_n}(\uh_s^\fa,\uh_s)$ of degree divisible by $p$ 
and prime to $N$. Letting 
$f_0$ denote the restriction of $f$ to
geometric points as above, $f$ is stable if and only if
either $f_0(C^\fa)=0$ or $f_0(C^\fa)=C$.  
The first condition is equivalent to $f_0=g_0\circ\phi^\fa$
for some $g_0\in\Hom_{W_0}(\uE_{s-1}^\fa,\uE_s)$.  Since $\phi^\fa$
has degree $p$ it induces an isomorphism on $\ell$-divisible groups
over $W_n$, and so the map on $\ell$-divisible groups induced by $g_0$
lifts to $W_n$.  By the Serre-Tate theorem $g_0$ itself lifts
to a morphism over $W_n$, and so $f\in L_n$.  
Now suppose $f_0(C^\fa)=C$. Since the degree of $f$ is divisible by $p$
we must have $f_0(\uE_s^\fa(W_0)[p])=C$, and so 
$f^\vee_0(C)=(f^\vee_0\circ f_0)(\uE_s^\fa(W_0)[p])=0$.  
Hence $f^\vee_0=g_0\circ\phi$ for some 
$g_0\in\Hom_{W_0}(\uE_{s-1},\uE_s^\fa)$, and so $f_0\in L^\vee_n$ as above.

Conversely, if $f_0\in L_n\cup L^\vee_n$
then either $f_0(C^\fa)=0$ or $f^\vee_0(C)=0$.  In the second
case we compute the Weil $e_p$-pairing 
$$
e_p\big( f_0(\uE_s^\fa(W_0)[p]),C \big) =
e_p\big( \uE_s^\fa(W_0)[p], f^\vee_0(C)\big) =0.
$$
This implies $f_0(\uE_s^\fa(W_0)[p])\subset C$, and so, in either case,
$f_0(C^\fa)\subset C$ and $f$ is stable.
\end{proof}

\begin{Lem}\label{IEii}
For any positive integer $m$ with $(m,N)=1$, 
the composition 
$$
\Hom_{W_n}(\uh_s^\fa,\uh_s)\map{}\Hom_{W_0}(\uh_s^\fa,\uh_s)\map{p}
\Hom_{W_0}(\uh_s^\fa,\uh_s)
$$ taking $f\mapsto pf_0$
identifies the unstable elements of 
$\Hom_{W_n}(\uh_s^\fa,\uh_s)_{\deg(m)}$ with 
the complement of $(M_n)_{\deg(mp^2)}$ in 
$(L_n\cap L^\vee_n)_{\deg(mp^2)}$ (the degree $mp^2$ elements of
$M_n$ and $L_n\cap L^\vee_n$, respectively).
\end{Lem}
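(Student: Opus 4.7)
The plan is to establish the claimed bijection $f \mapsto pf_0$ in four steps, with the key computational payoff coming in the last.

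First, for any $f \in \Hom_{W_n}(\uh_s^\fa,\uh_s)$, the two factorizations $[p]_{\uh_s^\fa} = (\phi^\fa)^\vee \circ \phi^\fa$ and $[p]_{\uh_s} = \phi^\vee \circ \phi$ give
\[
pf \;=\; \bigl(f \circ (\phi^\fa)^\vee\bigr) \circ \phi^\fa \;=\; \phi^\vee \circ (\phi \circ f)
\]
over $W_n$, and the intermediate maps $f\circ(\phi^\fa)^\vee$ and $\phi\circ f$ lift to $W_n$ by construction, so $pf_0 \in (L_n\cap L_n^\vee)_{\deg(mp^2)}$. This handles containment.

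Second, I characterize when $pf_0 \in M_n$. If $pf_0 = (\phi^\vee h \phi^\fa)_0$, then post-composing with $\phi$ on the left and $(\phi^\fa)^\vee$ on the right yields $\phi f_0 (\phi^\fa)^\vee = p h_0$, so $\phi f_0 (\phi^\fa)^\vee$ must kill $\uh_{s-1}^\fa[p]$. Since $(\phi^\fa)^\vee$ maps $\uh_{s-1}^\fa[p]$ onto $C^\fa$ (because $(\phi^\fa)^\vee\phi^\fa = [p]$ vanishes on $p$-torsion, and the image has the right order), divisibility is equivalent to $\phi(f_0(C^\fa)) = 0$, i.e., $f_0(C^\fa) \subset C$, which is precisely stability. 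The lift of $h_0$ from $W_0$ to $W_n$ is automatic: $[p]$ is étale in residue characteristic $\ell\neq p$, and by the Serre-Tate theorem a $W_n$-morphism factors through an étale isogeny iff its reduction does. Injectivity of $f\mapsto pf_0$ is immediate from the injectivity of $\Hom_{W_n}\hookrightarrow \Hom_{W_0}$ (Lemma in the text) and the torsion-freeness of $\Hom_{W_0}$.

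The main obstacle is surjectivity. Let $g\in (L_n\cap L_n^\vee)_{\deg(mp^2)} \setminus (M_n)_{\deg(mp^2)}$, and write $g = g_1\circ\phi^\fa = \phi^\vee\circ g_2$ over $W_0$ with $g_1 \in \Hom_{W_n}(\uh_{s-1}^\fa,\uh_s)$ and $g_2 \in \Hom_{W_n}(\uh_s^\fa,\uh_{s-1})$, both of degree $mp$. Choose bases $\{u_1,u_2\}$ of $\uh_s^\fa[p]$ with $C^\fa=\langle u_1\rangle$, $\{w_1,w_2\}$ of $\uh_s[p]$ with $C=\langle w_1\rangle$, and $\{v_1,v_2\}$ of $\uh_{s-1}^\fa[p]$ with $v_1=\phi^\fa(u_2)$. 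The condition $g\in L_n^\vee$ says $\mathrm{image}(g|_{p\text{-tor}})\subset C$; since $g(u_1)=0$, this forces $g_1(v_1)=g(u_2)=\alpha\, w_1$ for some $\alpha\in\F_p$, so in these bases
\[
g_1|_{\uh_{s-1}^\fa[p]} \;=\; \begin{pmatrix} \alpha & \beta \\ 0 & \gamma \end{pmatrix}
\]
is upper-triangular. Since $\deg(g_1)=mp$ is divisible by $p$, $g_1$ has nontrivial kernel on $p$-torsion, so $\alpha\gamma = \det(g_1|_{p\text{-tor}}) = 0$. A direct computation (paralleling Step 2, with $h = \phi g_1/p$) shows $\gamma = 0$ iff $g_1$ factors through $\phi^\vee$ iff $g\in M_n$, while $\alpha = 0$ iff $g$ vanishes on $\uh_s^\fa[p]$ iff $g$ is divisible by $p$ in $\Hom_{W_0}$. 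The hypothesis $g\notin M_n$ forces $\gamma\neq 0$, hence $\alpha=0$, hence $g = pf_0$ for a unique $f_0\in \Hom_{W_0}(\uh_s^\fa,\uh_s)$ of degree $m$.

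Finally, $f_0$ lifts to $f\in\Hom_{W_n}$: since $g$ itself lifts and $[p]$ is étale in residue characteristic $\ell\neq p$, the factorization $\tilde g = p\tilde f$ descends from the analogous factorization over $W_0$ by the same Serre-Tate argument used in Step 2. The resulting $f$ is unstable, for if it were stable then Step 2 would place $pf_0 = g$ in $M_n$, contradicting our hypothesis. The bijectivity is then complete, with the critical technical input being the ``upper-triangular with forced zero determinant'' analysis of $g_1$ on $p$-torsion, which cleanly separates the obstructions to membership in $M_n$ (the entry $\gamma$) and to $p$-divisibility (the entry $\alpha$).
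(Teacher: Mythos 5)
Your proof is correct and follows essentially the same route as the paper: the containment and the equivalence $pf_0 \in M_n \Leftrightarrow f$ stable are argued via the same factorization $pf = \phi^\vee \circ (\phi\circ f)$ and the same Serre--Tate lifting argument (that, because $\phi$ and $\phi^\fa$ are \'etale in residue characteristic $\ell \neq p$, factoring through them commutes with base change to $W_0$). For surjectivity, the paper shows directly that if $g_0 = (\phi^\vee \circ y)_0 \in L_n\cap L^\vee_n \setminus M_n$ then $y_0(C^\fa)\neq 0$ forces $y_0(\uE_s^\fa[p]) = y_0(C^\fa)$ and hence $g_0(\uE_s^\fa[p]) = g_0(C^\fa) = 0$; your ``upper-triangular matrix with $\det \equiv \deg \equiv 0 \pmod p$'' formulation of $g_1|_{p\text{-tor}}$ is a reorganization of exactly this kill-the-$p$-torsion computation, with the dichotomy $\alpha\gamma = 0$ making the trichotomy (in $M_n$ / divisible by $p$ / both) explicit. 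Both arguments rest on the same facts ($\phi^\fa(\uE_s^\fa[p]) = \ker(\phi^\fa)^\vee$, $\phi^\vee(\uE_{s-1}[p]) = C$) and there is no substantive difference in logical content, only in packaging.
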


\begin{proof}
First suppose we are given some $f\in\Hom_{W_n}(\uh_s^\fa,\uh_s)$;
the claim is that $pf_0\in M_n$ if and only if $f$ is stable.  
By definition $pf_0\in M_n$ if and only if there is some
$f'\in\Hom_{W_n}(\uE_{s-1}^\fa,\uE_{s-1})$ such that 
$pf=\phi^\vee\circ f'\circ \phi^\fa$, or equivalently, such that 
$\phi\circ f= f'\circ\phi^\fa$.  Furthermore, this is equivalent
to finding $f'_0\in \Hom_{W_0}(\uE_{s-1}^\fa,\uE_{s-1})$ such that
$\phi\circ f_0= f'_0\circ\phi^\fa$ holds in the   fiber
(since $\phi$ and $\phi^\fa$ induce isomorphisms on 
$\ell$-divisible groups over $W_n$, the map on $\ell$-divisible
groups induced by $f_0'$ lifts to $W_n$, and so the Serre-Tate
theorem implies that $f_0'$ itself lifts).  Such an $f'_0$ exists
if and only if $(\phi\circ f_0)(C^\fa)=0$, which is equivalent to
$f$ being stable.

Now suppose we are given a homomorphism 
$g_0\in L_n\cap L^\vee_n$ of degree
divisible by $p^2$, with $g_0\not\in M_n$.  
There is some $y\in\Hom_{W_n}(\uh_s^\fa,\uh_{s-1})$ such that
$g_0$ is the restriction of $g=\phi^\vee\circ y$ to the   fiber.
Let $y_0$ denotes the restriction of $y$ to the   fiber.
If $y_0(C^\fa)=0$ we could write $y_0=y_0'\circ\phi^\fa$ for some 
$y_0'\in\Hom_{W_0}(\uE_{s-1}^\fa,\uE_{s-1})$.
As above, the map on $\ell$-divisible groups induced by such a $y_0'$ would
lift to $W_n$, and so by the Serre-Tate theorem 
$y_0'$ itself would lift to some
$y'\in\Hom_{W_n}(\uE_{s-1}^\fa,\uE_{s-1})$ with $g_0$ equal to the
restriction of $\phi^\vee\circ y'\circ\phi^\fa$ to the   
fiber. This contradicts $g_0\not\in M_n$,  so $y_0(C^\fa)\not=0$.
Since $p$ divides the degree of $y_0$ we must have 
$y_0(\uE_s^\fa(W_0)[p])=y_0(C^\fa)$.  Now 
$g_0\in L_n$ implies
$$
0=g_0(C^\fa)=(\phi^\vee_0\circ y_0)(C^\fa)=g_0(\uE_s^\fa(W_0)[p]),
$$
so $g_0=p f_0$ for some $f_0\in\Hom_{W_0}(\uE_s^\fa,\uE_s)$.
As above, the Serre-Tate theorem guarantees that $f_0$ lifts to
a morphism $f$ over $W_n$.
\end{proof}

\begin{Cor}\label{IE}
The expression (\ref{inclusion}) is equal to
\begin{eqnarray*}
|\Hom_{W_n}( \uh_s^\fa, \uh_s)_{\deg(mp^2)}^\unstable|
-|\Hom_{W_n}( \uh_s^\fa, \uh_s)_{\deg(m)}^\unstable|.
\end{eqnarray*}
\end{Cor}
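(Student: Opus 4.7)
The plan is to rewrite each of the four summands of (\ref{inclusion}) as a count of elements of degree $mp^2$ inside the ambient set $\Hom_{W_0}(\uh_s^\fa,\uh_s)$, and then resolve the resulting combinatorial identity using inclusion-exclusion together with Lemmas \ref{IEi} and \ref{IEii}.

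First I would use the three degree-preserving injections defined just before Lemma \ref{IEi} (plus the last lemma itself) to translate terms. Since $\phi$ and $\phi^\fa$ both have degree $p$, composition gives canonical bijections
\[
\Hom_{W_n}(\uh_{s-1}^\fa,\uh_s)_{\deg(mp)}\;\iso\;(L_n)_{\deg(mp^2)},\quad
\Hom_{W_n}(\uh_s^\fa,\uh_{s-1})_{\deg(mp)}\;\iso\;(L^\vee_n)_{\deg(mp^2)},
\]
\[
\Hom_{W_n}(\uh_{s-1}^\fa,\uh_{s-1})_{\deg(m)}\;\iso\;(M_n)_{\deg(mp^2)}.
\]
After this substitution, (\ref{inclusion}) becomes
\[
|\Hom_{W_n}(\uh_s^\fa,\uh_s)_{\deg(mp^2)}|-|(L_n)_{\deg(mp^2)}|-|(L^\vee_n)_{\deg(mp^2)}|+|(M_n)_{\deg(mp^2)}|.
\]

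Next, I would apply inclusion-exclusion, writing $|L_n|+|L^\vee_n|=|L_n\cup L^\vee_n|+|L_n\cap L^\vee_n|$ (all restricted to degree $mp^2$), and use that $M_n\subseteq L_n\cap L^\vee_n$ to rewrite $-|L_n\cap L^\vee_n|+|M_n|=-|(L_n\cap L^\vee_n)\setminus M_n|$. Thus (\ref{inclusion}) equals
\[
\Big(|\Hom_{W_n}(\uh_s^\fa,\uh_s)_{\deg(mp^2)}|-|(L_n\cup L^\vee_n)_{\deg(mp^2)}|\Big) - |\big((L_n\cap L^\vee_n)\setminus M_n\big)_{\deg(mp^2)}|.
\]

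Finally I would identify each of the two bracketed quantities. By Lemma \ref{IEi}, $(L_n\cup L^\vee_n)_{\deg(mp^2)}$ is exactly the image (under restriction to the fiber) of the stable elements of $\Hom_{W_n}(\uh_s^\fa,\uh_s)_{\deg(mp^2)}$; since this restriction map is injective, the first bracket is $|\Hom_{W_n}(\uh_s^\fa,\uh_s)^\unstable_{\deg(mp^2)}|$. By Lemma \ref{IEii}, multiplication by $p$ bijects $\Hom_{W_n}(\uh_s^\fa,\uh_s)^\unstable_{\deg(m)}$ with $\big((L_n\cap L^\vee_n)\setminus M_n\big)_{\deg(mp^2)}$, giving the second bracket as $|\Hom_{W_n}(\uh_s^\fa,\uh_s)^\unstable_{\deg(m)}|$. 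Combining yields the stated identity.

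There is no real obstacle here; the proof is essentially formal bookkeeping, and the only minor point to verify is that every degree-multiplication in the translation step matches (each injection into $\Hom_{W_0}(\uh_s^\fa,\uh_s)$ multiplies the degree by exactly one factor of $p$ for $L_n$ and $L^\vee_n$ and by $p^2$ for $M_n$, so all four counts land at degree $mp^2$). Everything nontrivial has already been done in Lemmas \ref{IEi} and \ref{IEii}.
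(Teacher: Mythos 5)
Your proposal is correct and follows essentially the same route as the paper: after translating all four $\Hom$-counts into $L_n$, $L^\vee_n$, $M_n$ at degree $mp^2$, both arguments apply inclusion--exclusion, use Lemma~\ref{IEi} to trade the stable elements (or equivalently $L_n\cup L^\vee_n$) and Lemma~\ref{IEii} to identify $(L_n\cap L^\vee_n)\setminus M_n$ with the unstable elements of degree $m$. The only difference is cosmetic reordering of the inclusion--exclusion bookkeeping.
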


\begin{proof}
By the definitions of $M_n$, $L_n$ and $L^\vee_n$,
\begin{eqnarray*}
|\Hom_{W_n}( \uh_{s-1}^\fa, \uh_{s-1})_{\deg(m)}| &=& |(M_n)_{\deg(mp^2)}|\\
|\Hom_{W_n}( \uh_{s-1}^\fa, \uh_{s})_{\deg(mp)}| &=& |(L_n)_{\deg(mp^2)}|\\
|\Hom_{W_n}( \uh_{s}^\fa, \uh_{s-1})_{\deg(mp)}| &=& 
|(L^\vee_n)_{\deg(mp^2)}|
\end{eqnarray*}
Consequently, the expression (\ref{inclusion}) is equal to
\begin{eqnarray*}\lefteqn{
|\Hom_{W_n}( \uh_s^\fa, \uh_s)_{\deg(mp^2)}^\unstable| +
  |\Hom_{W_n}( \uh_s^\fa, \uh_s)_{\deg(mp^2)}^\stable| } \hspace{3cm}\\
& &   -|(L_n)_{\deg(mp^2)}|-|(L^\vee_n)_{\deg(mp^2)}|+
|(M_n)_{\deg(mp^2)}|.
\end{eqnarray*}
By Lemma \ref{IEi} this is
\begin{eqnarray*}
\lefteqn{|\Hom_{W_n}( \uh_s^\fa, \uh_s)_{\deg(mp^2)}^\unstable| +
  |(L_n\cup L^\vee_n)_{\deg(mp^2)}| } \hspace{3cm} \\
& &-|(L_n)_{\deg(mp^2)}|-|(L^\vee_n)_{\deg(mp^2)}|+
|(M_n)_{\deg(mp^2)}|
\end{eqnarray*}
which we write as
\begin{eqnarray*}
\lefteqn{
|\Hom_{W_n}( \uh_s^\fa, \uh_s)_{\deg(mp^2)}^\unstable| -
  |(L_n\cap L^\vee_n)_{\deg(mp^2)}| + |(M_n)_{\deg(mp^2)}| } \hspace{2cm} \\
&=&|\Hom_{W_n}( \uh_s^\fa, \uh_s)_{\deg(mp^2)}^\unstable| -
|\Hom_{W_n}( \uh_s^\fa, \uh_s)_{\deg(m)}^\unstable|
\end{eqnarray*}
using Lemma \ref{IEii}.
\end{proof}

Set $R=\Hom_{W_0}(\uh_s,\uh_s)$ and $B=R\otimes_{\Z}\Q$.
Thus $B$ is a rational quaternion 
algebra ramified exactly at $\ell$ and $\infty$,
and  $R\subset B$ is a level-$N$ Eichler 
order \cite[Lemma 7.1]{conrad}. 
The reduction map $$\Hom_{W}(\uh_s,\uh_s)\map{}\Hom_{W_0}(\uh_s,\uh_s)$$
induces an embedding $\iota:K\map{}B$ which, by the Serre-Tate theorem, 
is \emph{optimal} for the pair $(\co_s,R)$ in the sense that 
$\iota(K)\cap R=\iota(\co_s)$.
We henceforth regard $K$ as a subfield of $B$, supressing $\iota$
from the notation.  There is a canonical decomposition
$$B= B^+\oplus B^-=K\oplus Kj$$ where $j\in B$ is 
a trace zero element with the property $j x j^{-1}=\bar{x}$
for all $x\in K$.  This characterizes $j$ up to multiplication
by $\Q^\times$. The reduced norm is additive with respect to this
decomposition, i.e. $\N(b^++b^-)=\N(b^+)+\N(b^-)$.
We wish to determine which $b\in R=\Hom_{W_0}(\uh_s,\uh_s)$
are unstable.

\begin{Lem}\label{unstable}
An endomorphism $b\in R$ is unstable if 
and only if $$\ord_p\N(b^+)=\ord_p\N(b^-)=-2s,$$
where $b^\pm$ is the projection of $b$ to the summand $B^\pm$.
\end{Lem}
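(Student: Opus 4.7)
The plan is to reduce the statement to a local computation at $p$ inside $R\otimes\Z_p$. Since $p\nmid N$, the Eichler order $R$ is maximal at $p$, so $R\otimes\Z_p\cong M_2(\Z_p)=\End_{\Z_p}T_p(\uE_s)$, and $\co_s\otimes\Z_p\hookrightarrow M_2(\Z_p)$ is an optimal embedding. Because $\ell\neq p$ the $p$-divisible group of $\uE_s$ over $W_0$ is \'etale, and because $\uE_{s-1}$ has CM by exactly $\co_{s-1}$, the kernel $C\subset\uE_s[p]$ of $\phi$ is the reduction modulo $p$ of the unique proper $\co_{s-1}\otimes\Z_p$-sublattice $T_C\subset T_p(\uE_s)$ of index $p$ containing $pT_p$.

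I then fix a $\Z_p$-basis $e_1,e_2$ of $T_p(\uE_s)$ with $T_C=\Z_p e_1+p\Z_p e_2$; in this basis, $b\in R$ is unstable precisely when its $(2,1)$-entry $b_{21}$ is a $p$-adic unit. Using optimality, the image of $\co_s\otimes\Z_p$ is explicit (generated as a $\Z_p$-module by $I$ and a matrix whose lower-right entry has $p$-valuation exactly $s$), and one can normalize a trace-zero element $j\in B\otimes\Q_p$ anticommuting with $K$ so that $\ord_p j_{21}=s$ and $\N(j)\in\Z_p^\times$. Writing $b=x+yj$ with $x,y\in K\otimes\Q_p$ and letting $(\alpha,\delta)$ and $(\gamma,\eta)$ be the respective coordinates of $x$ and $y$ in the split case (with obvious analogues in the inert and ramified cases), a direct matrix computation gives
\[
b_{11}=\alpha+\eta,\qquad b_{22}=\delta-\eta,\qquad b_{21}=p^s\eta,
\]
so $b$ is unstable if and only if $\ord_p\eta=-s$.

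The final step is to propagate the valuation $\ord_p\eta=-s$ to all four coordinates via the integrality $b\in M_2(\Z_p)$. Integrality of $b_{11}$ and $b_{22}$ forces $\ord_p\alpha=\ord_p\delta=-s$ together with the congruences $\alpha\equiv-\eta$ and $\delta\equiv\eta\pmod{\Z_p}$, yielding $\ord_p\N(b^+)=\ord_p(\alpha\delta)=-2s$; these same congruences make $\delta-\alpha-\eta$ a $p$-adic unit modulo $\Z_p$, so the integrality of $b_{12}=(\delta-\alpha-\eta+\gamma)p^{-s}+\gamma j_{12}$ cannot be satisfied unless $\ord_p\gamma=-s$, giving $\ord_p\N(b^-)=\ord_p(\N(y)\N(j))=-2s$. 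The converse implication is immediate from $b_{21}=p^s\eta$. The main obstacle is the case-by-case bookkeeping for the three splitting behaviors of $p$ in $K$: each case demands a slightly different normal form for the optimal embedding of $\co_s$ and for the trace-zero $j$, but in every case the essential structural fact $\ord_p j_{21}=s$ persists, so the integrality argument goes through uniformly.
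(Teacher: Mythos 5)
Your approach --- pass to $R\otimes\Z_p\cong M_2(\Z_p)$, pick a $\Z_p$-basis of $T_p(\uE_s)$ adapted to $T_C$ so that instability is read off the $(2,1)$-entry, then push valuations around via integrality of the remaining entries --- is a genuine alternative to the paper's argument, which stays intrinsic: it uses that $T_p(\uE_s)$ is free of rank one over $\co_{s,p}$, and that the set of norm-divisible-by-$p$ elements of $\co_{s,p}$ is precisely the maximal ideal $p\co_{s-1,p}$, to avoid ever unpacking matrix entries. In the split case your strategy is sound; some of the displayed formulas are off by unit factors of $\sqrt{D}$, and the formula you give for $b_{12}$ is incorrect as written (the repair is routine and the conclusion $\ord_p\gamma=-s$ still follows). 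Note also that the converse is not ``immediate from $b_{21}=p^s\eta$'': to pass from $\ord_p\N(b^\pm)=-2s$ back to $\ord_p\eta=-s$ you must again invoke the integrality of $b_{11}$ and $b_{22}$.

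The substantial gap is the phrase ``obvious analogues in the inert and ramified cases.'' The ramified case cannot occur at all, since $p\nmid D$. The inert case is not an analogue: your computation relies on a $\Z_p$-basis that is simultaneously adapted to $T_C$ and to the idempotent decomposition $K_p\cong\Q_p\times\Q_p$, so that $b^+$ acts upper-triangularly and $b_{21}$ is contributed entirely by the $Kj$-part. When $\epsilon(p)=-1$, $K_p$ is a field and no such basis exists; in any $T_C$-adapted basis (for example $\{p^s\sqrt{D}t,\ t\}$ with $t$ an $\co_{s,p}$-generator of the Tate module) the matrix of $b^+=\alpha\in K_p$ already has a nonzero $(2,1)$-entry, so $b_{21}$ mixes contributions from $b^+$ and $b^-$ and the valuation-pushing step collapses. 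You would need to redo the bookkeeping in that mixed situation, or else fall back on the paper's norm-theoretic argument, which handles split and inert $p$ uniformly without ever separating matrix entries.
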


\begin{proof}
We are free to assume that $j$ is chosen in $R$.
Let $T$ denote the $p$-adic Tate module of
$\uE_s(W_0)[p^\infty]$ and set $V=T\otimes\Q_p$.
The split quaternion algebra $B_p=B\otimes\Q_p$ acts on 
$V$, and the stabilizer of
$T\subset V$ is exactly $R_p=R\otimes\Z_p$ (since the order $R$ is locally
maximal away from $N$).
Under the identification of $V/T$ with $\uE_s(W_0)[p^\infty]$, the subgroup
$\co_{s-1,p}T/T$ is identified with $C$, and so the unstable elements
of $R$ are exactly those which do not stabilize the lattice
$T'=\co_{s-1,p}T\supset T$.  As an $\co_{s,p}$-module, $T$ is free of
rank one (proof: $T$ is isomorphic as an $\co_{s,p}$-module to some
fractional $\co_{s,p}$-ideal.  By the optimality of 
$K\map{}B$ with respect to $(\co_s,R)$, this ideal is proper, and  
all proper ideals of $\co_{s,p}$ are principal).
Fix a generator $t\in T$, and let
$X\in \co_{s,p}$ be such that $jt=Xt$.  This implies 
in particular that $\N(X)=\N(j)$.
As a $\Z_p$-module, $T$ is generated by $t$ and $p^s\sqrt{D}t$, and
so $\alpha+\beta j\in B$ (with $\alpha$, $\beta\in K$)
stabilizes $T$ if and only if
the elements 
$$(\alpha+\beta j)t=(\alpha+\beta X)t\hspace{1cm}
(\alpha+\beta j)p^s\sqrt{D}t=(\alpha-\beta X)p^s\sqrt{D}t$$
are in $T$.  From this we deduce that
$$R_p=\{\alpha+\beta j\in B_p \mid 
\alpha,\beta X \in (p^s\sqrt{D})^{-1}\co_{s,p},
\alpha+\beta X\in\co_{s,p} \}.$$
Applying similar reasoning to the lattice $T'$, we find that
the order of $B_p$ leaving both $T$ and $T'$ stable is
$$ R_p^\mathrm{stable}=\{\alpha+\beta j\in B_p \mid 
\alpha,\beta X\in (p^{s-1}\sqrt{D})^{-1}\co_{s-1,p},
\alpha+\beta X\in\co_{s,p} \}.$$

Given  $b=\alpha+\beta j\in R_p$, set $\alpha'=p^s\sqrt{D}\alpha$ and
$\beta'=p^s\sqrt{D}\beta$. 
It is easily seen that the set of elements of 
$\co_{s,p}$ of norm divisible by $p$ is
equal to the unique maximal ideal $p\co_{s-1,p}\subset \co_{s,p}$.
Since $\alpha'\equiv -\beta' X\pmod{p^s\sqrt{D}\co_{s,p}}$,
$\alpha'$ is a unit if and only if $\beta' X$ is a unit.
Both elements are units if and only if 
$\ord_p\N(\alpha)=\ord_p\N(\beta X)=-2s$,
and both are nonunits if and only if $\alpha+\beta j\in R_p^\mathrm{stable}$.
\end{proof}

\begin{Prop}
For any non-negative integers $m$, $n$ with $(m,N)=1$, 
there is a bijection between 
$\Hom_{W_n}( \uh_s^\fa, \uh_s)_{\deg(m)}^\unstable$
and the set of all $b\in R\cdot\fa$ such that
\begin{enumerate}
\item $\N(b)=m\N(\fa)$,
\item $\ord_p\N(b^+)=\ord_p\N(b^-)=-2s$,
\item and 
$$
\ord_\ell \big(D\N(b^-)\big) \ge 
\left\{\begin{array}{ll} 2n+1& \mathrm{if\ } \epsilon(\ell)=-1\\
 n+1 & \mathrm{if\ } \epsilon(\ell)=0.\end{array}\right.
$$
\end{enumerate}
\end{Prop}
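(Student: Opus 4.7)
The plan is to identify $\Hom_{W_0}(\uh_s^\fa,\uh_s)$ with a specific $R$-ideal inside $B$ via Serre's construction, use Lemma \ref{unstable} to translate the unstable condition, and then use the Serre--Tate theorem together with a deformation-theoretic analysis of the $\ell$-divisible group to pin down exactly which elements lift to $W_n$.

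First I would use the nonsplit hypothesis on $\ell$: by the Serre--Tate argument already deployed in the proof of Proposition \ref{split primes}, every Heegner diagram of $p$-power conductor over $W$ reduces to a supersingular object, so $B=\End_{W_0}(\uh_s)\otimes\Q$ is indeed the quaternion algebra ramified exactly at $\ell$ and $\infty$, and $R$ is a level-$N$ Eichler order. Serre's construction provides a canonical $\co_s$-linear isogeny $\uh_s\to\uh_s^\fa$ of degree $\N(\fa)$, and composition with this isogeny identifies $\Hom_{W_0}(\uh_s^\fa,\uh_s)$ with the $R$-submodule $R\cdot\fa\subset B$ in such a way that $\deg(f)\cdot\N(\fa)=\N(b)$ for the element $b$ corresponding to $f$. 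This identification is compatible with $\Gamma_0(N)$-structure because Serre's construction is. So condition (a) is precisely the statement that the corresponding $b\in R\fa$ has degree $m$.

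Next, by the injection $\Hom_{W_n}\hookrightarrow\Hom_{W_0}$ of the base change lemma and by Lemma \ref{unstable}, the image of $\Hom_{W_0}(\uh_s^\fa,\uh_s)_{\deg(m)}^{\unstable}$ in $R\fa$ is exactly the set of $b$ satisfying both (a) and (b). All that remains is to determine, among these $b$, which ones actually lift to elements of $\Hom_{W_n}(\uh_s^\fa,\uh_s)$; condition (c) should be this lifting criterion.

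The core of the proof is this lifting computation. By the Serre--Tate theorem, $b\in R\fa$ lifts to $W_n$ if and only if its induced map on $\ell$-divisible groups $\uh_s^\fa[\ell^\infty]\to\uh_s[\ell^\infty]$ lifts to $W_n$. Since $\ell\neq p$ and $\uh_s$ is supersingular at $v$, these $\ell$-divisible groups are formal groups of height $2$ on which $\co_{s,\ell}=\co_{K,\ell}$ acts, making them Lubin--Tate formal $\co_{K_\ell}$-modules. The $K$-linear part $b^+$ therefore lifts unconditionally (it is a CM endomorphism, respected by the canonical deformation of the formal $\co_{K_\ell}$-module), and the entire obstruction is concentrated in the $K$-antilinear part $b^-\in Kj$. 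A direct computation with the Lubin--Tate deformation, analogous to \cite[III.7--9]{gz} and \cite[\S 6]{conrad}, shows that $b^-$ lifts to $W_n$ exactly when its image in the relevant tangent space vanishes to order $n$, a condition that translates into a lower bound on $\ord_\ell\N(b^-)$. The extra factor of $D$ in (c) comes from the different of $K_\ell/\Q_\ell$, whose $\ell$-adic valuation is $0$ in the inert case and $1$ in the ramified case; combined with the ramification indices $1$ and $2$ respectively, this yields the thresholds $\ord_\ell(D\N(b^-))\ge 2n+1$ when $\epsilon(\ell)=-1$ and $\ge n+1$ when $\epsilon(\ell)=0$.

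The main obstacle will be this last deformation-theoretic step: producing the exact valuation bound in (c) with the correct behavior of the factor $D$ at inert versus ramified primes, which requires careful bookkeeping of the interplay between the CM action of $\co_{K_\ell}$ on the Lubin--Tate formal group, the antilinear operator $j$, and the deformation parameter of the closed fiber.
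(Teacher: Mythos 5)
Your high-level structure matches the paper's proof exactly: identify $\Hom_{W_0}(\uh_s^\fa,\uh_s)$ with a lattice in $R\fa$, translate the unstable condition via Lemma \ref{unstable} (which is what makes $(\N(\fa),p)=1$ relevant), and reduce the whole proposition to the deformation-theoretic description of which homomorphisms lift to $W_n$. Where you diverge is in how condition (c) is handled. The paper cites \cite[Proposition III.7.3]{gz} and \cite[Theorem 7.12 and (7-3)]{conrad} as black boxes, both for the left $\co_s$-module isomorphism $\Hom_{W_n}(\uh_s^\fa,\uh_s)\iso\Hom_{W_n}(\uh_s,\uh_s)\otimes_{\co_s}\fa$ and for the explicit valuation-theoretic description of the image in $R\fa$; the only thing the paper adds on top of the citation is the local verification that the isomorphism respects the stable/unstable decomposition (using that $\fa$ is proper, hence locally principal), so that Lemma \ref{unstable} can be applied after localizing at $p$. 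You instead propose to re-derive the liftability criterion from first principles via Lubin--Tate deformation theory for formal $\co_{K_\ell}$-modules. Your sketch is sound---it is essentially Gross's computation of $\End_{W_n}$ of the canonical lift, which is exactly what underlies the cited results of Gross--Zagier and Conrad---and your inert/ramified thresholds come out right once one tracks that the base ring $W$ has uniformizer $\ell$ in the inert case and a uniformizer of $K_\ell$ in the ramified case, so that the two cases grow at rates $\ell$-valuation $2$ and $1$ per step in $n$. What the paper's route buys is brevity, and it confines the freshly proved content to Lemma \ref{unstable} (the genuinely new input here, reflecting that the conductor of $\co_s$ is $p^s$); what your route buys is self-containment, at the cost of re-deriving a sizeable chunk of \cite[III.7--9]{gz}.

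Two small corrections. You attribute the supersingularity of the reduction to ``the Serre--Tate argument already deployed in the proof of Proposition \ref{split primes}''; but that proposition treats $\epsilon(\ell)=1$, where the reduction is \emph{ordinary} and the vanishing of intersection numbers comes from a mismatch of CM conductors, not supersingularity. The supersingularity at nonsplit $\ell\ne p$ is the classical Deuring criterion (or its moduli-theoretic form in \cite[\S 2]{conrad}). Also, ``condition (a) is precisely the statement that the corresponding $b\in R\fa$ has degree $m$'' should read ``has reduced norm $m\N(\fa)$,'' consistent with the $\deg(f)\cdot\N(\fa)=\N(b)$ relation you state just before.
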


\begin{proof}
By \cite[Proposition III.7.3]{gz} or \cite[Theorem 7.12 and (7-3)]{conrad}
there is an isomorphism of left $\co_s$-modules
$$
\Hom_{W_n}( \uh_s^\fa, \uh_s)\iso \Hom_{W_n}(\uh_s,\uh_s)\otimes_{\co_s}\fa
$$
whose image (viewed as a lattice in $R\fa$) 
is exactly those elements satisfying property (c),
under which the degree $m$ isogenies correspond to those 
satisfying property (a).
We must show that this bijection takes the stable elements
onto those $b=b^++b^-$ for which property (b) fails.  
The isomorphism in question is defined as follows.  The map
$$\End_{W_n}(\uE_s)\otimes_{\co_s}\fa\map{\xi_n}
\Hom_{W_n}(\Hom_{\co_s}(\fa,\uE_s),\uE_s)\iso\Hom_{W_n}(\uE_s^\fa,\uE_s)$$
defined by $\xi_n(f\otimes x)(\phi)=f(\phi(x))$
is an isomorphism of $\co_s$-modules by Lemma 7.13 of \cite{conrad}, 
and taking level $N$ stucture into account we obtain an injection
of left $\co_s$-modules
$$
\Hom_{W_n}(\uh_s^\fa,\uh_s)\iso\Hom_{W_n}(\uh_s,\uh_s)\otimes_{\co_s}\fa
\hookrightarrow R\fa.
$$
This injection identifies 
$$\Hom_{W_n}(\uh_s^\fa,\uh_s)^\stable\iso
\Hom_{W_n}(\uh_s,\uh_s)^\stable\otimes_{\co_s}\fa$$
inside of $R\fa$
(this is easily checked everywhere locally using the fact that $\fa$ is 
proper, hence locally principal).  Localizing at $p$ and using 
$(\N(\fa),p)=1$, the claim follows from  Lemma \ref{unstable}.
\end{proof}

For any order $S$ of $B$, define
\begin{eqnarray}
\nonumber D_s^\fa(S,m)&=&\left\{ b\in S\cdot\fa \left|
\begin{array}{c} \N(b)=m\N(\fa)\\ 
\ord_p\N(b^+)=\ord_p\N(b^-)=-2s
\end{array}\right.\right\}\\
\label{delta def}\Delta_s^\fa(S,m) &=&  \sum_{b\in D_s^\fa(S,m)}
\left\{ \begin{array}{rr}
\frac{1}{2}\big(1+\ord_\ell\N(b^-)\big)
& \mathrm{if\ }\epsilon(\ell)=-1\\
\ord_\ell\big( D\N(b^-)\big) & \mathrm{if\ }\epsilon(\ell)=0
\end{array}\right.
\end{eqnarray}

\begin{Cor}\label{unstable to quaternion}
For $(m,N)=1$, $$\sum_{n\ge 0}
|\Hom_{W_n}( \uh_s^\fa, \uh_s)_{\deg(m)}^\unstable|
=\Delta_s^\fa(R,m).$$
\end{Cor}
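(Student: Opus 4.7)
The plan is to combine the preceding proposition with an elementary interchange of summation, and then simply count solutions to the $n$-dependent divisibility condition (c) for each fixed $b$.

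First, I would invoke the bijection of the preceding proposition to rewrite each term of the left-hand sum as the cardinality of the set of $b\in R\fa$ satisfying (a) $\N(b)=m\N(\fa)$, (b) $\ord_p\N(b^+)=\ord_p\N(b^-)=-2s$, and the $n$-dependent condition (c). Swapping the order of summation transforms $\sum_{n\ge 0}|\Hom_{W_n}(\uh_s^\fa,\uh_s)_{\deg(m)}^\unstable|$ into
\[
\sum_{b\in D_s^\fa(R,m)}\#\bigl\{n\ge 0 : b \text{ satisfies (c)}\bigr\},
\]
so the whole problem reduces to showing that for each $b\in D_s^\fa(R,m)$ the inner count equals the weight defined in (\ref{delta def}).

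Now I split cases. In the ramified case $\epsilon(\ell)=0$, condition (c) reads $\ord_\ell(D\N(b^-))\ge n+1$, so the number of admissible $n\ge 0$ is exactly $\ord_\ell(D\N(b^-))$ (which is non-negative because $b^-\ne 0$ by (b), and because both $D$ and the reduced norm on the integral lattice $R\fa$ contribute non-negative $\ell$-valuations). This matches the weight in $\Delta_s^\fa(R,m)$ directly. In the inert case $\epsilon(\ell)=-1$, condition (c) reads $\ord_\ell\N(b^-)\ge 2n+1$ (here $\ord_\ell D=0$), so the count equals $\lfloor(\ord_\ell\N(b^-)+1)/2\rfloor$, which agrees with $\tfrac{1}{2}(1+\ord_\ell\N(b^-))$ provided that $\ord_\ell\N(b^-)$ is odd. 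This parity is the one substantive point: since $B$ is the quaternion algebra ramified exactly at $\ell$ and $\infty$ and $K$ embeds in $B$ as the unramified quadratic extension of $\Q_\ell$ at the place $\ell$, the local algebra $B\otimes\Q_\ell$ is the unique quaternion division algebra over $\Q_\ell$; in it the anti-diagonal part $K_\ell\cdot j$ consists precisely of the elements whose reduced norm has odd $\ell$-valuation (equivalently, $j$ is a uniformizer in the maximal order of $B\otimes\Q_\ell$). Since $b^-=\beta j$ with $\beta\in K$ and $\ord_\ell\N(\beta)=2\ord_\ell(\beta)$ is even, $\ord_\ell\N(b^-)=\ord_\ell\N(\beta)+\ord_\ell\N(j)$ is indeed odd.

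The main obstacle is this parity verification in the inert case; everything else is a mechanical Fubini argument plus the definition of $\Delta_s^\fa(R,m)$. Once the local structure of $B\otimes\Q_\ell$ at the ramified prime $\ell$ is in hand, the equality follows by summing $b$-by-$b$.
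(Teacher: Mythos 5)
Your proof is correct and follows the same basic outline as the paper's: Fubini to interchange the sums, then for each $b\in D_s^\fa(R,m)$ count the number of $n$ satisfying condition (c), and resolve the parity issue in the inert case. The one place you diverge is in how you justify that $\ord_\ell\N(b^-)$ is odd when $\epsilon(\ell)=-1$. The paper defers this to the explicit model of $B$ constructed in the next subsection, where $j$ is chosen with $\N(j)=\ell q$ and $(q,\ell)=1$, so $\ord_\ell\N(j)=1$. You instead give a self-contained local argument: since $B\otimes\Q_\ell$ is the division algebra, $K_\ell\subset B_\ell$ is the unramified quadratic subfield, the unit norms from $K_\ell$ fill out $\Z_\ell^\times$, hence $j^2$ cannot have even $\ell$-valuation, and $\ord_\ell\N(\beta)$ is even for $\beta\in K_\ell^\times$. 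That is a cleaner, choice-free argument, and it is the ``real'' reason the normalization in the next section is possible at all. (The only slip is cosmetic: $j$ need not literally be a uniformizer of the maximal order of $B_\ell$, only an element of odd reduced-norm valuation times a unit in $K_\ell$; the parity is what you use and it is correct.) In the ramified case your count is correct, and you should also note, as the paper implicitly does via the definition of $\Delta_s^\fa$, that $\ord_\ell D=1$ there, so the weight $\ord_\ell(D\N(b^-))$ is $1+\ord_\ell\N(b^-)$.
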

\begin{proof}
When $\epsilon(\ell)=0$ this is immediate from the proposition above.
When $\epsilon(\ell)=-1$ it is similarly clear, provided one
knows that $\ord_\ell\N(b^-)$ is always odd; but (as we will see in 
the next section) we are free to choose $j$ in such a way that
$\ord_\ell\N(j)=1$, so writing  $b^-=\beta j$ with $\beta\in K$, 
$\ord_\ell(\N(b^-))=1+\ord_\ell\N(\beta)$ is odd.
\end{proof}


\subsection{Quaternionic sums}
\label{quaternions}


We continue to let $B$ be the rational quaternion algebra of discriminant
$\ell$ and assume we have a fixed embedding $K\hookrightarrow B$.
As noted before, this embedding induces a splitting 
$B=B^++B^-=K\oplus Kj$.
Let $\mathcal{S}$ denote the (finite) set of $K^\times$-conjugacy classes 
of $\co_s$-optimal, level $N$ Eichler orders in $B$;  
that is, level $N$ Eichler orders
$S$ such that $S\cap K=\co_s$, modulo the conjugation action of
$K^\times$. For such an $S$, the value of
$\Delta_s^\fa(S,m)$ (defined in (\ref{delta def}))
depends only on the class of $S$ in $\mathcal{S}$. 
Define 
$$
\Delta^\fa_s(m)=
\sum_{S\in\mathcal{S}}\Delta^\fa_s(S,m).
$$

The remainder of this section is devoted to the proof of the
following proposition.  The statement holds without parity restrictions
on $D$, but \emph{we will assume throughout that $D$ is odd}, refering the
reader to \cite{mann} for a description of the needed changes to the proof
in the case where $D$ is even.  
The method of proof  follows the 
calculations performed in \cite[\S III.9]{gz} (and described in great detail
in \cite{mann}).  The main difference (apart from working 
in higher conductor) is that
we have ``removed the Euler factor at $p$'' by adding the 
condition $\ord_p\N(b^+)=\ord_p\N(b^-)=-2s$ to the set $D_s^\fa(S,m)$
over which the summation $\Delta^\fa_s(S,m)$ occurs.

\begin{Prop}\label{quaternion sum}
There is a proper integral $\co_s$-ideal $\fq$ such that
for every positive integer $m$
\begin{equation*}
\Delta_s^\fa(m)=
\sum_{ \genfrac{}{}{0pt}{}{n>0}{\ell|n,(n,p)=1} } 
\delta(n) r_\fa(mp^{2s}|D|-nN)\cdot
\left\{\begin{array}{ll}
\ord_\ell(\ell n) R_{\fa\fq\fn}(n/\ell)& \mathrm{if\ }\epsilon(\ell)=-1\\
\\
\ord_\ell(n) R_{\fa\fq\fn\fl}(n/\ell)& \mathrm{if\ }\epsilon(\ell)=0
\end{array}\right.
\end{equation*}
where $\fn$ is any integral $\co_s$-ideal with $\co_s/\fn\iso\Z/N\Z$.
When $\epsilon(\ell)=-1$, we may take  $\N(\fq)\equiv -\ell\pmod{Dp}$,
and when $\epsilon(\ell)=0$ we may take $\N(\fq\fl)\equiv -\ell\pmod{Dp}$.
\end{Prop}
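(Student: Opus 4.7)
The plan is to follow the broad strategy of \cite[\S III.9]{gz}, now adapted to the situation where we have fixed the $p$-adic behavior of $b^\pm$ and are therefore counting only the ``bad'' Euler factor at the prime $\ell$ rather than the full archimedean sum. First I would choose the trace-zero element $j\in B$ so that it is integral and, when $\epsilon(\ell)=-1$, satisfies $\ord_\ell\N(j)=1$ (as was implicitly done at the end of the proof of Corollary \ref{unstable to quaternion}); when $\epsilon(\ell)=0$ I would arrange that $j$ generates, together with $K$, the ramification datum at $\ell$ needed to produce the ideal $\fl$ on the geometric side. Writing $b=b^++\beta j$ with $b^+\in K$, $\beta\in K$, the norm becomes $\N(b)=\N(b^+)+\N(\beta)\N(j)$.

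Next I would unwind the structure of $S\cdot\fa$ for a $\co_s$-optimal level-$N$ Eichler order $S$. Everywhere locally (and in particular at $p$, where the argument in the proof of Lemma \ref{unstable} gives the explicit description $R_p=\{\alpha+\beta j\mid \alpha,\beta X\in(p^s\sqrt{D})^{-1}\co_{s,p},\alpha+\beta X\in\co_{s,p}\}$) the lattice $S\cdot\fa$ is a sum of a fractional ideal $\mathfrak{b}^+\subset K$ and a translate $\mathfrak{b}^-\cdot j\subset Kj$, with $(\mathfrak{b}^+,\mathfrak{b}^-)$ determined by the conjugacy class of $S$ and by $\fa$. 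As $[S]$ ranges over $\mathcal{S}$, the class of $\mathfrak{b}^-$ ranges over a full set of representatives for $\Pic(\co_s)$ translated by a fixed element $\fq$; this is the key bookkeeping step and is where the auxiliary ideal $\fq$ of the proposition appears. Using the local description at $p$ together with the constraint $\ord_p\N(b^+)=\ord_p\N(b^-)=-2s$, the element $b^+$ may be written in the form $(p^s\sqrt{D})^{-1}\alpha'$ with $\alpha'$ a proper integral $\co_s$-element in the class of $\fa$, which rescales the norm of $b^+$ by a factor of $p^{2s}|D|$ and produces the exponent $mp^{2s}|D|$ that appears in $r_\fa(mp^{2s}|D|-nN)$.

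After this parameterization, the double sum over $[S]\in\mathcal{S}$ and over $b\in D_s^\fa(S,m)$ collapses to a sum over integers $n>0$ with $\ell\mid n$ and $(n,p)=1$, namely $n=\N(\beta j)\cdot N/(\text{normalizing factor})$, together with an inner count of pairs $(\alpha',\beta)$. For fixed $n$, counting $\alpha'$ gives the factor $r_\fa(mp^{2s}|D|-nN)$ by definition. Counting $\beta$, subject to $\beta\in\mathfrak{b}^-$ and $\N(\beta)$ prescribed, reduces by genus theory (as in \cite[\S III.9]{gz} and Lemma \ref{genus characters}) to counting proper $\co_s$-ideals of norm $n/\ell$ lying in a fixed genus, producing the $R_{\fa\fq\fn}(n/\ell)$ or $R_{\fa\fq\fn\fl}(n/\ell)$ factor according to $\epsilon(\ell)\in\{-1,0\}$. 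The factor $\delta(n)$ arises from the standard discrepancy between counting by genus and counting by class when $n$ shares prime divisors with $D$. The weight $\ord_\ell(\ell n)/2$ or $\ord_\ell(D\N(b^-))$ built into $\Delta_s^\fa(S,m)$ reassembles, after summation, into the multiplicity $\ord_\ell(\ell n)$ (resp.\ $\ord_\ell(n)$) in the final formula, using the chosen normalization of $j$ so that $\ord_\ell \N(\beta j)$ has a controlled parity.

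The hard part will be the precise identification of $\mathcal{S}$ with a torsor under $\Pic(\co_s)$ and the accompanying determination of the ideal $\fq$ so that its norm class mod $Dp$ matches the genus-theoretic prediction ($\N(\fq)\equiv-\ell\pmod{Dp}$ when $\epsilon(\ell)=-1$, and $\N(\fq\fl)\equiv-\ell\pmod{Dp}$ when $\epsilon(\ell)=0$). This requires tracking the local invariants of each Eichler order simultaneously with the local invariants of the embedding $K\hookrightarrow B$ at the ramified prime $\ell$, and verifying compatibility with the congruence conditions that make the genus character of Lemma \ref{genus characters} come out correctly. The calculation is essentially the content of \cite[\S III.9]{gz} with the added condition $(n,p)=1$ and with $\co_K$ replaced throughout by the order $\co_s$ of conductor $p^s$; the latter change is accommodated by restricting to \emph{proper} ideals, noting that the optimality hypothesis $S\cap K=\co_s$ guarantees that all $\co_s$-ideals arising in the parameterization are proper, and by invoking \cite{mann} to handle the parity behavior when $D$ is odd (with the usual modifications if $D$ were even).
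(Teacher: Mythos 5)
Your outline tracks the paper's own proof closely: fix a quaternionic model with $\N(j)$ related to $\ell$ and $q\equiv-\ell\pmod{Dp}$ (or $\ell q\equiv -\ell$), describe $S\cdot\fa$ explicitly as $\diff_s^{-1}\fa \oplus (\text{ideal})\, j$, use the transitive action of $\Pic(\co_s)$ (modulo the class of $\fl$) on $\mathcal{S}$, parameterize each $b=\alpha+\beta j$ by the pair of integral ideals $(\alpha\diff_s\fa^{-1},\ \beta\diff_s\fq\fn^{-1}\bar\fa^{-1}\cdots)$, and let genus theory produce $\delta(n)$ and the $R_{\fa\fq\fn}$/$R_{\fa\fq\fn\fl}$ factors. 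This is exactly the strategy the paper follows (Lemma \ref{Eichler} plus Lemma \ref{prequaternion sum}), the only substantive detail the proposal glosses over being the double-indexing by $w\in\mathcal{W}$ and $\fg\in\mathfrak{G}$ (representatives for $\Pic(\co_s)^2$) used to run over all genera at once.
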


If $\hat{K}^\times$ denotes the group of finite ideles of $K$ and
$\hat{\co}_s^\times\subset \hat{K}^\times$ 
is the group of units in the profinite completion of $\co_s$,
then there is an action of the ring class group
$\hat{K}^\times/K^\times\hat{\co}_s^\times\iso\Pic(\co_s)$
on $\mathcal{S}$: if $x=(x_r)\in\hat{K}^\times$ and $S\in\mathcal{S}$
then $S^x$ is defined by the relation $(S^x)_r=x_r S_rx_r^{-1}\subset B_r$
for every rational prime $r$.  In terms of $\co_s$-ideals the action is
again by conjugation: $S^\fb=\fb S\fb^{-1}$.

\begin{Lem}\label{Eichler}
The action of  $\Pic(\co_s)$ on $\mathcal{S}$ is transitive,
and the stabilizer of any element is the subgroup generated by the 
class of $\fl$ (so has order $1$ if $\epsilon(\ell)=-1$ and order
$2$ if $\epsilon(\ell)=0$).
\end{Lem}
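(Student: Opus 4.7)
The plan is to analyze the action adelically using the local structure of Eichler orders. Fix a representative $S_0$ of a class in $\mathcal{S}$ and consider the orbit map $[\fb] \mapsto [S_0^\fb]$. Transitivity amounts to the statement that every $\co_s$-optimal level-$N$ Eichler order $S \subset B$ is $\hat{K}^\times$-conjugate to $S_0$; modding out by $K^\times$ then yields transitivity on $\mathcal{S}$. Since the adelic conjugation is defined component-wise, this reduces to the purely local claim that at each rational prime $r$, any two $\co_{s,r}$-optimal Eichler orders of level $r^{\ord_r N}$ in $B_r$ are conjugate by $K_r^\times$. At primes $r \nmid N\ell$, and at $r = \ell$ (where the division algebra $B_\ell$ has a unique maximal order), this set is a singleton. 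At primes $r \mid N$, which split in $K$ by the Heegner hypothesis, I would identify $K_r = \Q_r \times \Q_r$ with the diagonal split torus of $M_2(\Q_r) \cong B_r$; the $\co_{s,r}$-optimal Eichler orders of level $r^{\ord_r N}$ containing this torus are then parametrized by pairs of $\Z_r$-fractional ideals of fixed total length, and $K_r^\times$ scales these and acts transitively.

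For the stabilizer, observe that $S_0^\fb$ is $K^\times$-conjugate to $S_0$ if and only if $\hat{\fb}\,t^{-1}$ normalizes $\hat{S}_0$ in $\hat{B}^\times$ for some $t \in K^\times$. Letting $\mathcal{N}(\hat{S}_0) \subset \hat{B}^\times$ denote the normalizer, this gives
$$\mathrm{Stab}(S_0) = \text{image of } \bigl(\hat{K}^\times \cap \mathcal{N}(\hat{S}_0)\bigr) \text{ in } \Pic(\co_s) = \hat{K}^\times/K^\times\hat{\co}_s^\times.$$
I would compute the local contribution at each prime. At $r \nmid N\ell$, the normalizer of the maximal order $S_r$ in $B_r^\times$ is $\Q_r^\times S_r^\times$, whose intersection with $K_r^\times$ equals $\Q_r^\times\co_{s,r}^\times$ by optimality, contributing trivially to $\Pic(\co_s)$. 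At $r \mid N$, the local normalizer is enlarged by the Atkin-Lehner involution $w_r$, but conjugation by $w_r$ swaps the two eigenlines of the diagonal torus $K_r$, realizing the nontrivial element of $\Gal(K_r/\Q_r)$; this involution is not inner on the commutative algebra $K_r$, so $w_r \notin K_r^\times$ and the contribution is again trivial.

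At $r = \ell$ the entire $B_\ell^\times$ normalizes the unique maximal order $S_\ell$, so $K_\ell^\times \cap \mathcal{N}(S_\ell) = K_\ell^\times$, whose image in $\Pic(\co_s)$ is generated by the class of a uniformizer of $K_\ell$, that is, by $[\fl]$. If $\epsilon(\ell) = -1$ then $K_\ell/\Q_\ell$ is unramified and $\ell$ itself is a uniformizer, so $[\fl] = 1$; if $\epsilon(\ell) = 0$ then $\fl$ has order exactly $2$ in $\Pic(\co_s)$, as observed immediately before the statement of the lemma. Combining the local computations yields $\mathrm{Stab}(S_0) = \langle [\fl] \rangle$, of order $1$ or $2$ as claimed.

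The main obstacle I anticipate is the careful local analysis at primes $r \mid N$, for both parts: parametrizing the $\co_{s,r}$-optimal Eichler orders of level $r^{\ord_r N}$ containing the fixed split torus in order to establish local transitivity, and verifying that the Atkin-Lehner element is not inner on the commutative subalgebra $K_r$. These are precisely the places where the Eichler order carries nontrivial level structure and the interaction with the fixed embedding of $K$ must be made explicit.
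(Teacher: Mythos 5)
Your overall strategy is the same as the paper's: reduce transitivity and the stabilizer computation to an adelic statement, then analyze the local situation prime by prime, splitting into the cases $(r,N\ell)=1$, $r\mid N$, and $r=\ell$. The transitivity part and the local normalizer computations away from $N$ match the paper's argument (which at the transitivity step simply invokes Mann's Theorem A.15).

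There is, however, a genuine gap in your stabilizer computation at the primes $r\mid N$, and it is precisely at the point you yourself flagged as troublesome. You argue that the Atkin--Lehner element $w_r$ induces the swap on $K_r\cong\Q_r\times\Q_r$, hence is ``not inner on $K_r$,'' hence $w_r\notin K_r^\times$, and then conclude that the nontrivial normalizer coset ``contributes nothing.'' But $w_r\notin K_r^\times$ does \emph{not} imply that $w_r\Q_r^\times S_r^\times$ is disjoint from $K_r^\times$: an element of that coset has the form $w_r u$ with $u\in\Q_r^\times S_r^\times$, and since $u$ need not normalize $K_r$, you cannot transport the ``the induced automorphism of $K_r$ is the nontrivial swap'' property from $w_r$ to $w_r u$. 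The paper closes this by a direct computation in $M_2(\Q_r)$: writing $S_r$ as the standard level-$N_r$ Eichler order with $K_r$ the diagonal torus, a product $w_r s$ with $s\in S_r^\times$ is diagonal only if $s$ is antidiagonal, and the constraints $s_{12}\in\Z_r$, $s_{21}\in N_r\Z_r$, $\det(s)\in\Z_r^\times$ are then incompatible because $\ord_r(N)>0$. Without something equivalent to this, your conclusion at $r\mid N$ is not justified. (Alternatively, the parametrization of $\co_{s,r}$-optimal Eichler orders by ``pairs of fractional ideals of fixed total length'' that you proposed for transitivity would, if carried out, also show that $K_r^\times$ acts on this $\Z$-torsor through $K_r^\times/\Q_r^\times\co_{s,r}^\times\cong\Z$ with trivial stabilizer; but you would need to actually run that argument rather than appeal to the non-innerness of $w_r$.) The rest of the proof is sound.
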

\begin{proof}
Let $S$ and $S'$ be $\co_s$-optimal level $N$ Eichler orders.  
To prove the transitivity of the action of $\Pic(\co_s)$
on $\mathcal{S}$, we must show that $S_r$ and $S'_r$ are conjugate
by elements of $K_r^\times$ for every prime $r$.
The proof of \cite[Theorem A.15]{mann} shows that this is the case
if either $\co_{s,r}$ is maximal (which occurs for all $r\not=p$)
or if $S_r$ and $S_r'$ are maximal (which occurs for all $(r,N)=1$).
To compute the kernel of the action,
fix $S\in\mathcal{S}$ and let $x=(x_r)$ be a finite idele of $K$.
If $S=S^x$ in $\mathcal{S}$ then there is some $y\in K^\times$
such that $x_ry_r^{-1}$ is contained in $N(S_r)$,
the normalizer of $S_r$ in $B_r^\times$,
for every prime $r$.  

If $(r,N\ell)=1$ then $N(S_r)=\Q_r^\times S_r^\times$,
and so $$x_r y_r^{-1}\in (\Q_r^\times S_r^\times)
\cap K_r^\times=\Q_r^\times\co_{s,r}^\times.$$
If $r|N$ then $\Q_r^\times S_r^\times$ has index $2$ in $N(S_r)$.  
Fix an isomorphism
$\psi:B_r\iso M_2(\Q_r)$ in such a way that $\psi(K_r)\iso\Q_r\oplus\Q_r$ 
is the quadratic subalgebra of diagonal matrices, 
and let $S_r'\subset M_2(\Q_r)$ be
the usual Eichler order of integral matrices whose lower left entry 
is divisible by $N_r=r^{\ord_r(N)}$.  
As $S_r$ and $\psi^{-1}(S_r')$ are both $\co_{s,r}$-optimal, 
by the discussion above there is a 
$z\in K_r^\times$ such that $z S_r z^{-1}= \psi^{-1}(S_r')$.
Thus replacing $\psi$ by a $\psi(K^\times)$-conjugate we 
may also assume that $\psi(S_r)=S_r'$. Having made such a choice, we now
supress $\psi$ from the notation.
The nontrival coset
of $\Q_r^\times S_r^\times$ in $N(S_r)$ is represented by the
matrix 
$\alpha=\begin{pmatrix}0 & 1\\ N_r&0 \end{pmatrix}$, and
one now checks directly
that 
$$
x_r y_r^{-1}\in N(S_r)\cap K_r^\times=(\Q_r^\times S_r^\times\sqcup
\alpha\Q_r^\times S_r^\times)
\cap K_r^\times=\Q_r^\times\co_{s,r}^\times.
$$
When $r=\ell$, $B_\ell$ has a unique maximal order,
hence $N(S_\ell)\cap K_\ell^\times=K_\ell^\times$. 
We have shown that a finite idele 
$(x_r)$ acts trivially on $\mathcal{S}$ if and only if
$(x_r)\in \hat{\Q}^\times\hat{\co}_s^\times K^\times_\ell K^\times
=\hat{\co}_s^\times K^\times_\ell K^\times$.
\end{proof}

Let $\mathcal{W}_0$ denote the set of prime divisors of $Dp$ if
$\epsilon(\ell)=-1$, and the set of prime divisors $\not=\ell$
of $Dp$ if $\epsilon(\ell)=0$. 
Let $\mathcal{W}$ be the free abelian group (written multiplicatively)
of exponent $2$ on the elements of $\mathcal{W}_0$,
and define a homomorphism
$$
\mathcal{W}\map{}\Pic(\co_s)[2]
$$
by sending $w\mapsto(\sqrt{D})_w$, the finite idele of 
$K$ which is $1$ away from $w$
and equal to the image of $\sqrt{D}$ under $K^\times\map{}K_r^\times$
at each $r|w$.  This map allows us to view $\mathcal{S}$ as a 
$\mathcal{W}$-module. By genus theory,
the map $\mathcal{W}\map{}\Pic(\co_s)[2]$ is surjective.  The
kernel has order $2$ if $\epsilon(\ell)=-1$, and has order $1$ if
$\epsilon(\ell)=0$.

As in \cite[pp. 265-266]{gz}, we now choose a particular model
for the quaternion algebra $B$.  Detailed proofs of the following
assertions can be found in \cite{mann}.
If $\epsilon(\ell)=-1$  then choose a prime $q$
such that $\left(\frac{-\ell q}{r}\right)=1$ for all primes $r\mid D$.
For such a $q$ the quaternion algebra $B$ is isomorphic
to the quaternion algebra
$\left(\frac{D,-\ell q}{\Q}\right)$ 
(meaning the quaternion algebra
$B=\Q\oplus\Q i\oplus\Q j\oplus \Q ij$ with $i^2=D$, $j^2=-\ell q$,
$ij=-ji$) and  $q$ is split in 
$K$.   We may, and do, further impose the condition $q\equiv -\ell\pmod{Dp}$.
If $\epsilon(\ell)=0$ then choose a prime $q\not=\ell $
such that $\left(\frac{-q}{r}\right)=1$
for all primes $r\mid (D/\ell)$, and with $\left(\frac{-q}{\ell}\right)=-1$.
For such a $q$ the quaternion algebra $B$ is isomorphic
to the quaternion algebra
$\left(\frac{D,-q}{\Q}\right)$, and again such a $q$ is split in 
$K$. We further impose the condition $\ell q\equiv -\ell\pmod{Dp}$.
We henceforth fix a $q$ as above and identify
$$
B\iso\left\{\begin{array}{ll}
\left(\frac{D,-\ell q}{\Q}\right) & \mathrm{if\ } \epsilon(\ell)=-1\\
\left(\frac{D,-q}{\Q}\right) & \mathrm{if\ } \epsilon(\ell)=0.
\end{array}\right.
$$
In either case we regard $K$ as a subfield of 
$B$ via $\sqrt{D}\mapsto i$, so that conjugation by 
$j$ acts as complex conjugation on $K$.  Let $\diff_s=p^s\sqrt{D}\co_s$
denote the different of the order $\co_s$.  Fix an integral $\co_s$-ideal
$\fn$ such that $\co_s/\fn\iso\Z/N\Z$, and let $\fq$ be an
integral $\co_s$-ideal of norm $q$.

\begin{Lem}
If $\epsilon(\ell)=-1$ there is a collection 
$\{X_r\in \Z_r^\times\mid r\in\mathcal{W}_0\}$
such that 
$$
R=\{\alpha+\beta j\mid \alpha\in\diff_s^{-1},\ \beta\in\diff_s^{-1}
\fn \fq^{-1},\ \alpha-X_r\beta\in \co_{s,r}\ \forall r\in\mathcal{W}_0\}
$$
is an $\co_s$-optimal level $N$ Eichler order, and such that 
$X_r^2= -\ell q$.
If $\epsilon(\ell)=0$ there is a collection 
$\{X_r\in\Z_{r}^\times\mid r\in\mathcal{W}_0\}$ 
such that
$$
R=\{\alpha+\beta j\mid \alpha\in\diff_s^{-1}\fl,\ \beta\in\diff_s^{-1}\fl
\fn\fq^{-1},\ \alpha-X_r\beta\in \co_{s,r}\ \forall r\in\mathcal{W}_0\}
$$ 
has the above property, and $X_r^2= -q$.  
\end{Lem}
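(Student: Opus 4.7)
The plan is to verify both the existence of the elements $X_r$ and the two asserted properties of $R$ via a prime-by-prime local analysis, reconstructing $R$ from its completions $R_r = R\otimes_\Z\Z_r \subset B_r$ at each rational prime $r$.

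First, I would establish the existence of the $X_r$ for $r\in\mathcal{W}_0$. Write $c=-\ell q$ when $\epsilon(\ell)=-1$ and $c=-q$ when $\epsilon(\ell)=0$, so that $j^2=c$. The congruences imposed on $q$ (namely $q\equiv -\ell\pmod{Dp}$ in the first case and $\ell q\equiv -\ell\pmod{Dp}$ in the second) force $c\equiv\ell^2\pmod r$ (resp.\ $c\equiv 1\pmod r$) for each $r\in\mathcal{W}_0$, so $c$ is a unit square modulo $r$. Since every $r\in\mathcal{W}_0$ divides $Dp$ with both $D$ and $p$ odd, Hensel's lemma lifts $\pm\ell$ (resp.\ $\pm 1$) to a unique unit $X_r\in\Z_r^\times$ with $X_r^2=c$.

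Next I would verify locally that the lattice $R$ is a level-$N$ Eichler order $\co_s$-optimally containing $K$, by identifying $R_r$ with the correct standard local order at each rational prime. At primes $r\nmid D\ell Npq$ the conditions reduce to $\alpha,\beta\in\co_{s,r}$, giving the maximal order $\co_{s,r}\oplus\co_{s,r}j$ of the split algebra $B_r$. At $r\mid N$ the Heegner hypothesis makes $r$ split in $K$, and the factor $\fn$ in the constraint on $\beta$ cuts out the usual Eichler order of level $r^{\ord_r(N)}$. At $r=\ell$ the lattice $\co_{s,\ell}\oplus\co_{s,\ell}j$ (after clearing $\diff_{s,\ell}$ and, when $\epsilon(\ell)=0$, the factor $\fl$, using $\diff_{s,\ell}^{-1}\fl=\co_{s,\ell}$ coming from the ramification of $K$) is the unique maximal order of the division algebra $B_\ell$: when $\epsilon(\ell)=-1$ one uses that $K_\ell$ is the unramified quadratic subfield and $v_\ell(j^2)=1$. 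Finally, for $r\in\mathcal{W}_0$ the idempotents $e_\pm=\tfrac{1}{2}(1\pm j/X_r)$ produce an explicit splitting $B_r\cong M_2(\Q_r)$; under this presentation the congruence $\alpha-X_r\beta\in\co_{s,r}$ becomes the condition that one of the two matrix entries picked out by the eigenspaces of $j/X_r$ is integral, and combined with the lattice conditions on $\alpha,\beta$ one recognizes the standard description of a maximal order of $M_2(\Q_r)$ into which $\co_{s,r}$ embeds optimally.

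The main obstacle will be the local analysis at $r=p$, where $\co_{s,p}$ is non-maximal of conductor $p^s$ in $K_p$ and the lattice $\diff_{s,p}^{-1}$ extends well beyond $\co_{s,p}$. The congruence $\alpha-X_p\beta\in\co_{s,p}$ must exactly compensate for the denominator $(p^s\sqrt{D})^{-1}$ in $\diff_{s,p}^{-1}$ together with the $\fq^{-1}$ appearing in the $\beta$-condition, delivering a maximal order in the split algebra $B_p$; verifying this compensation is precisely the substantive content of the lemma, and it is for this purpose that the unit $X_p$ with $X_p^2=c$ was constructed. Once these local identifications are in hand, the computation of the reduced discriminant via the product of local discriminants confirms that $R$ is a level-$N$ Eichler order; the optimality $R\cap K=\co_s$ is then immediate, since setting $\beta=0$ in the defining conditions gives $\alpha\in\diff_{s,r}^{-1}$ for all $r$ together with $\alpha\in\co_{s,r}$ for all $r\in\mathcal{W}_0$, which together force $\alpha\in\co_s$.
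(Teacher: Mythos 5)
There is a genuine gap: you correctly reduce to a prime-by-prime computation, correctly observe that the $X_r$ exist by Hensel's lemma, and correctly identify that the whole weight of the lemma falls on the primes $r\in\mathcal{W}_0$—but then you explicitly decline to carry out that step, calling it ``the main obstacle'' and ``precisely the substantive content of the lemma.'' A proof proposal that labels the crux as unresolved has not proved the lemma. Moreover, you never address $r=q$: your case list covers $r\nmid D\ell Npq$, $r\mid N$, $r=\ell$, and $r\in\mathcal{W}_0$, but $q\nmid DpN\ell$ by construction, and at $r=q$ the condition on $\beta$ involves $\fq_q^{-1}$ while $j^2$ has positive valuation, so maximality there is not the ``trivial'' case you filed it under. (The paper sidesteps this by first computing that $S=\co_s+\fq^{-1}j$ has reduced discriminant $p^{2s}D\ell$, so $S_r$ is maximal for all $r\nmid pND$, including $r=q$.)

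The missing idea for $r\in\mathcal{W}_0$ is not hard once seen, and it is not specific to $r=p$: set $t_r=X_r-j$, compute $jt_r=-X_rt_r$, so that $B_r\cdot t_r=K_r\cdot t_r$ is a faithful two-dimensional left $B_r$-module, hence $B_r\iso M_2(\Q_r)$; then $R_r$ is, by a direct computation of exactly the kind carried out in Lemma \ref{unstable}, the stabilizer in $B_r$ of the lattice $\co_{s,r}t_r$, and the stabilizer of any lattice in a two-dimensional simple module is automatically a maximal order. This replaces your ``one recognizes the standard description of a maximal order'' handwave with an argument that works uniformly for $r\mid D$ and $r=p$ alike (your $e_\pm=\tfrac12(1\pm j/X_r)$ is the same idea---$t_r$ is $2X_re_-$---but the stabilizer reformulation is what makes maximality automatic). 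Filling in this step and the case $r=q$ would complete the argument; as written, the two places where real work is required are both left open.
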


\begin{proof}
Suppose $\epsilon(\ell)=-1$.
The order $S=\co_s+\fq^{-1} j\subset B$ has reduced discriminant
$p^{2s}D\ell$, and for a prime $r$ not dividing $pND$, $R_r=S_r$.
Thus the lattice $R_r$ is a maximal order at such primes.  If $r|N$
then $R_r=\co_{s,r}+\fn_r j$ is an Eichler order of level $r^{\ord_r N}$,
so it remains to consider $R_r$ for $r|Dp$.  We have assumed 
$q\equiv -\ell\pmod{Dp}$, so that by Hensel's lemma
$j^2=-\ell q$ has a square root
$X_r\in\Z_r^\times$ for each $r|Dp$.  If we set $t_r=X_r-j$ then one
readily computes $jt_r= -X_r t_r$, so that $B_r\cdot t_r=K_r\cdot t_r$ 
is a two-dimensional $\Q_r$-vector space
on which $B_r$ acts by left multiplication.  
Exactly as in the proof of Lemma \ref{unstable},
the (necessarily maximal) order leaving $\co_{s,r}\cdot t_r$ stable
is 
$$
R_r=\{\alpha+\beta j\in B_r\mid \alpha, \beta X_r\in \diff^{-1}_{s,r},
\alpha-\beta X_r\in\co_{s,r}\}.
$$
This shows that $R$ is a level $N$ Eichler order,
and the $\co_s$-optimality is immediate from the explicit description.
The case $\epsilon(\ell)=0$ is entirely similar.
\end{proof}

Fix a family $\{X_r\}$ and an order $R$ as in the lemma. It is
verified by direct calculation that for any $w\in\mathcal{W}$,
$R^w$ has the same explicit form as  $R$, but with $X_r$ replaced
by 
$$X_r^w=\left\{\begin{array}{ll}
-X_r & \mathrm{if\ }r|w\\
X_r & \mathrm{otherwise.}
\end{array}\right.$$

\begin{Lem}\label{prequaternion sum}
If $\fg$ is any integral $\co_s$-ideal of norm prime to $Dp$ then
\begin{eqnarray}\label{double ideals}\lefteqn{
\sum_{w\in\mathcal{W}}
\sum_{{b\in D^\fa_s(R^{w \fg},m)}}(1+\ord_\ell\N(b^-))= }\nonumber \\
& &
\sum_{ \genfrac{}{}{0pt}{}{n>0}{\ell|n,(n,p)=1} } 
\delta(n)r_\fa(mp^{2s}|D|-nN)\cdot
\left\{\begin{array}{ll}
4\cdot r_{\fa\fq\bar{\fn}\bar{\fg}^2}(n/\ell)\ \ord_\ell(\ell n) &  
\mathrm{if\ }\epsilon(\ell)=-1 \\
 2\cdot r_{\fa\fq\bar{\fn}\bar{\fg}^2\fl}(n/\ell)\ \ord_\ell(n) &
\mathrm{if\ }\epsilon(\ell)=0.
\end{array}\right.
\end{eqnarray}
\end{Lem}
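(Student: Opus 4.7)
The proof follows the template of Gross--Zagier \cite[\S III.9]{gz} and \cite{mann}, adapted to conductor $p^s$ and to the added $p$-adic restriction $\ord_p\N(b^+) = \ord_p\N(b^-) = -2s$ built into $D^\fa_s(S,m)$. The plan is to parametrize $D^\fa_s(R^{w\fg},m)$ by pairs of integral $\co_s$-ideals, collapse the sum over $w \in \mathcal{W}$ via genus characters, and then interpret the weight $1 + \ord_\ell\N(b^-)$ as an ideal count.

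For the first step, use the explicit model of $R$ in the preceding lemma to write each element of $R^{w\fg}\cdot\fa$ uniquely as $b = \alpha + \beta j$ with $\alpha, \beta \in K$. Conjugation by the idele $w\fg$ has two effects: at each $r \in \mathcal{W}_0$ with $r\mid w$ it flips $X_r \mapsto -X_r$ (since $\sqrt{D}\,j\,\sqrt{D}^{-1} = -j$), while at other places it rescales $\beta$ locally by $(w\fg)/\overline{(w\fg)}$. Membership in $R^{w\fg}\cdot\fa$ thus becomes $\alpha \in \diff_s^{-1}\fa$ and $\beta \in \diff_s^{-1}\fn\fq^{-1}(\fg/\bar\fg)\fa$ (insert an extra $\fl$-factor when $\epsilon(\ell) = 0$), subject to local congruences $\alpha - X_r^w\beta \in \co_{s,r}$ for the finitely many $r \in \mathcal{W}_0$. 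Then pass to integral $\co_s$-ideals $\mathfrak{A} = (\alpha)\diff_s\fa^{-1}$ and $\mathfrak{B} = (\beta)\diff_s\fn^{-1}\fq\fa^{-1}(\fg/\bar\fg)^{-1}$. The norm identity $\N(b) = \N(\alpha) + \ell q\N(\beta) = m\N(\fa)$ together with $\ord_p\N(b^\pm) = -2s$ becomes $\N(\mathfrak{A}) = mp^{2s}|D| - nN$ with $n = \ell\N(\mathfrak{B})$, $(n,p) = 1$, and $\ell\mid n$; the classes of $\mathfrak{A}$ and $\mathfrak{B}$ in $\Pic(\co_s)$ come out to $\fa^{-1}$ and $\bar\fa\bar\fn\fq\bar\fg^2$ respectively (using $\bar\fg \sim \fg^{-1}$), matching the classes on the right after $r_X = r_{X^{-1}}$.

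Next, perform the sum $\sum_{w\in\mathcal{W}}$. For a fixed pair $(\mathfrak{A},\mathfrak{B})$, the local congruences $\alpha \equiv X_r^w\beta \pmod{\co_{s,r}}$ at each $r \in \mathcal{W}_0$ are either compatible for both choices $\pm X_r$ or for neither; orthogonality of the corresponding genus characters collapses the total contribution to $\delta(n) = 2^{\#\{r \mid (n,D)\}}$ times the indicator that $\mathfrak{A}$ lies in the $\Pic(\co_s)$-class of $\fa$, producing the weight $r_\fa(mp^{2s}|D| - nN)$. The $p$-adic condition $\ord_p\N(b^\pm) = -2s$ enters only through the exclusion of $p$-divisibility of $n$, matching the $(n,p) = 1$ constraint on the right. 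Finally, the weight $1 + \ord_\ell\N(b^-)$ is handled case-by-case: for $\epsilon(\ell) = -1$, $\N(b^-) = \ell q\N(\beta)$ with $\ord_\ell\N(\beta)$ even (inert $\ell$), so $n = \ell\N(\mathfrak{B})$ gives $1 + \ord_\ell\N(b^-) = \ord_\ell(\ell n)$, with the factor $4$ coming from the two-fold sign ambiguity $\alpha \mapsto \pm\alpha$ and $\beta \mapsto \pm\beta$ under $\co_s^\times = \{\pm 1\}$; for $\epsilon(\ell) = 0$, the factorization $\fl^2 = \ell\co_s$ yields $1 + \ord_\ell\N(b^-) = \ord_\ell(n)$ with constant $2$.

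The principal technical obstacle is the ideal-theoretic bookkeeping in the parametrization step: tracking the $(\fg/\bar\fg)$-twist so that the class of $\mathfrak{B}$ comes out to exactly $\bar\fa\bar\fn\fq\bar\fg^2$, and verifying that the new $p$-adic restriction $\ord_p\N(b^\pm) = -2s$ contributes precisely the $p^{2s}|D|$ factor in the norm of $\mathfrak{A}$ with no extraneous Euler factor at $p$. Once the parametrization is correct, the genus-character collapse and the $\ord_\ell$-weight extraction run essentially in parallel with \cite[\S III.9]{gz} and \cite{mann}, modified only in cosmetic ways to accommodate the higher conductor $p^s$.
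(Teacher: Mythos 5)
Your overall strategy matches the paper's: parametrize $D^\fa_s(R^{w\fg},m)$ by pairs of integral $\co_s$-ideals via $b=\alpha+\beta j\mapsto (\alpha\diff_s\fa^{-1},\,\beta\diff_s\fq\fn^{-1}\fg^{-1}\bar\fg\bar\fa^{-1})$, then count preimages in the double sum over $(w,b)$. But there is a genuine error in the sign-counting step that is not merely cosmetic.

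You assert that at each $r\in\mathcal W_0$ the congruence $\alpha-X_r^w\beta\in\co_{s,r}$ is ``compatible for both choices $\pm X_r$ or for neither.'' This is false, and it is precisely the place where the higher-conductor restriction $\ord_p\N(b^\pm)=-2s$ comes in. Writing $\alpha'=p^s\sqrt D\,\alpha$, $\beta'=p^s\sqrt D\,\beta$, one always has $\alpha'^2\equiv X_r^2\beta'^2\pmod{\diff_{s,r}}$. For $r\mid D$, $r\neq p$, the ring $\co_{s,r}/\diff_{s,r}$ is a field, so exactly one of $\pm X_r$ works unless $\alpha'\equiv 0$ (equivalently $r\mid\N(\mathfrak c^-)$), in which case both do; ``neither'' never happens. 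More importantly, at $r=p$ the restriction built into $D^\fa_s$ forces $\alpha'\in\co_{s,p}^\times$, and since $(\Z/p^{2s}\Z)^\times$ has no $2$-torsion other than $\pm1$, the sign at $p$ is \emph{always uniquely determined}. This is the pivot of the whole argument: it implies that for a fixed $w$ the parametrization $F^w$ is two-to-one (only $\pm(\alpha+\beta j)$ lie in a given $D^\fa_s(R^{w\fg},m)$, never $\alpha-\beta j$), and that the $\delta(\N(\mathfrak c^-))$ choices of $w$ valid for $\alpha+\beta j$ are \emph{disjoint} from those valid for $\alpha-\beta j$. Your attribution of the factor $4$ to a ``two-fold sign ambiguity'' (implicitly: all four of $\pm\alpha\pm\beta j$ live in one $R^{w\fg}\fa$) is inconsistent with this; the four elements are distributed as two across each of $2\delta(\N(\mathfrak c^-))$ values of $w$, not four across $\delta$ values. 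The arithmetic $4\cdot\delta$ happens to land in the right place, but the reasoning supplied would not survive scrutiny at $r=p$, and does not demonstrate the crucial two-to-one property that justifies the count in the presence of the $p$-adic constraint.

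A smaller point: $\co_s^\times=\{\pm1\}$ (for $D\neq -3,-4$) acts as $b\mapsto -b$, i.e.\ changes $\alpha$ and $\beta$ together; the four sign choices arise from choosing generators of the two principal ideals independently up to $\pm1$, not from a $4$-element unit action. This is fixable by rephrasing, but it compounds the confusion about where the factor of $4$ originates.
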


\begin{proof}
Suppose that $\epsilon(\ell)=-1$.  The lattice $R^{w\fg}\fa$
is given explicitly by
$$R^{w\fg}\fa=\{\alpha+\beta j\mid \alpha\in\diff_s^{-1}\fa,\ 
\beta\in\diff_s^{-1}\fn \fq^{-1}\fg\bar{\fg}^{-1}\bar{\fa},
\ \alpha-X^w_r\beta\in \co_{s,r}\forall r|Dp\}.
$$
Denote by $\mathfrak{C}$ the set of all pairs $(\mathfrak{c}^+,
\mathfrak{c}^-)$ of proper, integral $\co_s$-ideals such that
\begin{enumerate}
\item $\N(\mathfrak{c}^+)+\ell N\N(\mathfrak{c}^-)=mp^{2s}|D|$,
\item $\mathfrak{c}^+$ and $\mathfrak{c}^-$ are prime to $p$,
\item $\mathfrak{c}^+$ lies in the $\Pic(\co_s)$-class of $\bar{\fa}$
\item $\mathfrak{c}^-$ lies in the $\Pic(\co_s)$-class of 
$\fa\bar{\fn}\fq\bar{\fg}^2$
\end{enumerate}
and for each $w\in\mathcal{W}$ let 
$F^w: D^\fa_s(R^{w\fg},m)\map{}\mathfrak{C}$
be the function defined by sending $b=\alpha+\beta j$ to the pair
\begin{equation}\label{quaternions to ideals}
\mathfrak{c}^+=\alpha\diff_s\fa^{-1}\hspace{1cm}\mathfrak{c}^-=
\beta\diff_s\fq\fn^{-1}\fg^{-1}\bar{\fg}\bar{\fa}^{-1}.
\end{equation}
If $D^\fa_s(R^{w\fg},m)$ contained both $b=\alpha+\beta j$ and 
$\alpha-\beta j$ then we would have $b^+=\alpha\in\co_{s,p}$,
contradicting $\ord_p\N(b^+)=-2s$.  This implies that $F^w$ is two-to-one.

The claim is that every element of $\mathfrak{C}$ is in the image of $F^w$
for exactly $2\delta(\N(\mathfrak{c}^-))$ choices of $w$, so that
\begin{equation}\label{double ideals II}
\sum_{w\in\mathcal{W}}\sum_{b\in D^\fa_s(R^{w \fg},m)}(1+\ord_\ell\N(b^-))=
4\sum_{(\mathfrak{c}^+,\mathfrak{c}^-)\in\mathfrak{C}}
(2+\ord_\ell\N(\mathfrak{c}^-))\cdot \delta(\N(\mathfrak{c}^-)).
\end{equation}
To verify this, fix $(\mathfrak{c}^+,\mathfrak{c}^-)\in\mathfrak{C}$
and choose generators
$$\alpha\co_s=\mathfrak{c}^+\diff_s^{-1}\fa
\hspace{1cm}
\beta\co_s=\mathfrak{c}^-\diff_s^{-1}\fq^{-1}\fn\fg\bar{\fg}^{-1}\bar{\fa}.
$$
Then $b=\alpha+\beta j$ lies in $D_s^\fa(R^{w\fg},m)$ if and only if
$\alpha-X_r^w\beta\in\co_{s,r}$ for every prime divisor $r$ of $Dp$,
or equivalently, if $\alpha'\equiv X_r^w\beta'\pmod{\diff_{s,r}}$ for 
every $r$, where 
$\alpha'=p^s\sqrt{D}\alpha,\ \beta'=p^s\sqrt{D}\beta\in\co_s$.
The action of complex conjugation on $\co_s/\diff_s$ is trivial
and so we have
$$\alpha'^2\equiv\N(\alpha')=\N(\fa)\N(\mathfrak{c}^+)
\equiv- \ell N\N(\mathfrak{c}^-)\N(\fa)=-\ell q\N(\beta')\equiv
X_r^2\beta'^2
$$
modulo $\diff_{s,r}$.  When $r\not=p$, $\co_{s,r}/\diff_{s,r}$
is a field, and so $\alpha'\equiv\pm X_r\beta'$.  The congruence
holds for both signs if and only if $\alpha'\equiv 0$, which holds
if and only if $r|\N(\mathfrak{c}^-)$. 
When $r=p$, $\alpha'\in\co_{s,r}^\times$ and the unit group of the ring 
$\Z/p^{2s}\Z\iso \co_{s,r}/\diff_{s,r}$ has no $2$-torsion 
apart from $\pm 1$.  Hence $\alpha'\equiv \pm X_r\beta'$
for a unique choice of sign.  We have shown that 
$\alpha+\beta j$ is contained in $D^\fa_s(R^{w\fg},m)$ for 
exactly $\delta(\N(\mathfrak{c}^-))$ choices of $w$.  
The element $\alpha-\beta j$ lies in $D^\fa_s(R^{w\fg},m)$ for another 
$\delta(\N(\mathfrak{c}^-))$ choices of $w$, all distinct from the 
first set of choices.  This proves (\ref{double ideals II}).
The right hand side of (\ref{double ideals II}) agrees with the right
hand sum in the statement of the lemma by setting
$n=\ell\N(\mathfrak{c}^-)$.

The case where $\epsilon(\ell)=0$ is very similar: the set 
$\mathfrak{C}$ is instead taken to be the collection of pairs 
of proper, integral $\co_s$-ideals
$(\mathfrak{c}^+,\mathfrak{c}^-)$ such that
\begin{enumerate}
\item $\N(\mathfrak{c}^+)+ N\N(\mathfrak{c}^-)=mp^{2s}|D|$,
\item $\mathfrak{c}^+$ and $\mathfrak{c}^-$ are prime to $p$
and divisible by $\fl$
\item $\mathfrak{c}^+$ lies in the $\Pic(\co_s)$-class of $\bar{\fa}$
\item $\mathfrak{c}^-$ lies in the $\Pic(\co_s)$-class of 
$\fa\bar{\fn}\fq\bar{\fg}^2$.
\end{enumerate}
The function from $D_s^w(R^{w\fg},m)$ to $\mathfrak{C}$ is then
exactly as in (\ref{quaternions to ideals}), and 
the expression on the left hand side of (\ref{double ideals}) 
is equal to
\begin{eqnarray*}\lefteqn{
4\sum_{(\mathfrak{c}^+,\mathfrak{c}^-)\in\mathfrak{C}}
\ord_\ell\N(\mathfrak{c}^-)\cdot 
2^{ \#\{ r\in\mathcal{W}_0\mid r\mathrm{\ divides\ } \N(\mathfrak{c}^-)  \} }
= }\hspace{2cm}\\
& & 2\sum_{ \genfrac{}{}{0pt}{}{n>0}{\ell|n,(n,p)=1 } }
r_\fa(mp^{2s}|D|-nN)r_{\fa\fq\bar{\fn}\bar{\fg}^2}(n)
\delta(n)\ord_\ell(n)
\end{eqnarray*}
by taking $n=\N(\mathfrak{c}^-)$. This is equivalent to the stated
equality.
\end{proof}

\begin{proof}[Proof of Proposition \ref{quaternion sum}]
Fix a set $\mathfrak{G}=\{\fg\}$ of proper integral
$\co_s$-ideals of norm prime to $Dp$ such that 
$\{\fg^2\mid \fg\in\mathfrak{G}\}$ represents $\Pic(\co_s)^2$.
As $\fg$ varies over $\mathfrak{G}$ and $w$ varies over $\mathcal{W}$,
$w\fg$ varies over $\Pic(\co_s)$ hitting each ideal class once
if $\epsilon(\ell)=0$ and twice if $\epsilon(\ell)=-1$.
By Lemmas \ref{Eichler} and \ref{prequaternion sum} (recall
also that we are assuming $D$ odd) we have
\begin{eqnarray*}
\Delta_s^\fa(m) &=&
\frac{1}{2}\sum_{w\in\mathcal{W}}\sum_{\fg\in\mathfrak{G}}
\Delta_s^\fa(R^{w\fg},m) \\
&=& \frac{1}{2(1-\epsilon(\ell))}
\sum_{\fg\in\mathfrak{G}}\sum_{w\in\mathcal{W}}
\sum_{b\in D_s^\fa(R^{w\fg},m)}\big(1+\ord_\ell\N(b^-)\big)\\
&=& \sum_{ \genfrac{}{}{0pt}{}{n>0}{\ell|n,(n,p)=1 } } 
\delta(n) r_\fa(mp^{2s}|D|-nN)\cdot
\left\{\begin{array}{ll}
\ord_\ell(\ell n) R_{\fa\fq\fn}(n/\ell)& \mathrm{if\ }\epsilon(\ell)=-1\\
\\
\ord_\ell(n) R_{\fa\fq\fn\fl}(n/\ell)& \mathrm{if\ }\epsilon(\ell)=0.
\end{array}\right.
\end{eqnarray*}
\end{proof}


\subsection{The $\ell$-contribution to the height}
\label{conjugation}


Fix $m=m_0p^r$ with  $(m_0,Np)=1$.
Let $\fb$ be a  proper integral $\co_s$-ideal, and denote by 
$\tau\in\Gal(H_s/K)$ the Artin symbol of $\fb$.
We consider the quantity
$$
\langle c_s^\tau, T_{m_0}(\bd_{s,r+2}^{\sigma\tau})\rangle_v 
-\langle c_{s-1}^\tau, T_{m_0}(\bd_{s,r+1}^{\sigma\tau})\rangle_v
$$
where the pairing is the local N\'eron symbol on $X_{/H_{s,v}}$
of Proposition \ref{curve height}.
By replacing $h_i$ with $h_i^\tau$ in Proposition \ref{intersections}, 
this is equal to
\begin{eqnarray*}\lefteqn{
\log_p(\ell)\sum_{n\ge 0}
\Big( |\Hom_{W_n}( {\uh}_s^{\fa\fb}, {\uh}_s^\fb)_{\deg(mp^2)}|
-|\Hom_{W_n}( {\uh}_{s-1}^{\fa\fb}, {\uh}^\fb_s)_{\deg(mp)}|
} \hspace{2cm}\\ 
& &-|\Hom_{W_n}( {\uh}_{s}^{\fa\fb}, {\uh}^\fb_{s-1})_{\deg(mp)}|
+|\Hom_{W_n}( {\uh}_{s-1}^{\fa\fb}, {\uh}^\fb_{s-1})_{\deg(m)}| 
\Big),
\end{eqnarray*}
which is equal, by Corollary \ref{IE}, to
\begin{eqnarray*}
\log_p(\ell)\sum_{n\ge 0}\Big(
|\Hom_{W_n}( \uh_s^{\fa\fb}, \uh_s^\fb)_{\deg(mp^2)}^\unstable|
-|\Hom_{W_n}( \uh_s^{\fa\fb}, \uh_s^\fb)_{\deg(m)}^\unstable|\Big).
\end{eqnarray*}
By Corollary \ref{unstable to quaternion},
this last expression is equal to 
$$ \log_p(\ell)\Big(
\Delta_s^\fa(R^{\fb^{-1}},mp^2)-\Delta_s^\fa(R^{\fb^{-1}},m)\Big),
$$
where we have used \cite[(7-8)]{conrad} to identify 
$\End_{W_0}(\uh_s^\fb)$ with $\fb^{-1}\cdot \End_{W_0}(\uh_s)\cdot \fb$ inside
of $B=\Hom_{W_0}(\uh_s,\uh_s)\otimes\Q$.

\begin{Prop}\label{nonsplit evaluation}
For any positive integer $m=m_0p^r$ with $(m_0,Np)=1$ and
any $\ell$ nonsplit in $K$,
\begin{eqnarray*}
\sum_w\Big(
\langle c_s, T_{m_0}(\bd_{s,r+2}^{\sigma})\rangle_w
-\langle c_{s-1}, T_{m_0}(\bd_{s,r+1}^{\sigma})\rangle_w \Big)
&= &\log_p(\ell)\big(\Delta_s^\fa(mp^2)-
\Delta_s^\fa(m)\big)
\end{eqnarray*}
where the sum is over all primes $w$ of $H_s$ above $\ell$ and
$\Delta_s^\fa(m)$ is the quantity defined in \S \ref{quaternions}
(and computed explicitly in Proposition \ref{quaternion sum}),
and the pairing is the local N\'eron symbol on $X_{/H_{s,w}}$
of Proposition \ref{curve height}.
\end{Prop}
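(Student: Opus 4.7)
The plan is to reduce the sum over primes $w$ of $H_s$ above $\ell$ to the single-place computation of \S \ref{conjugation} via Galois translation, and then to identify the resulting sum over ideal classes with $\Delta_s^\fa(mp^2) - \Delta_s^\fa(m)$ using Lemma \ref{Eichler}.

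First I would fix our reference place $v$ of $H_s$ above $\ell$ and enumerate the remaining primes above $\ell$ using the Artin map $\Pic(\co_s) \iso \Gal(H_s/K)$. By the discussion at the start of \S \ref{nonsplit}, the decomposition group $D_v \subset \Gal(H_s/K)$ is generated by the Artin symbol of $\fl$, which is trivial when $\epsilon(\ell) = -1$ and of order two when $\epsilon(\ell) = 0$. Hence the primes $w \mid \ell$ are parametrized by $\Pic(\co_s)/\langle \fl \rangle$ via $\fb \mapsto v^{\tau_\fb}$, where $\tau_\fb$ denotes the Artin symbol of $\fb$.

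Next, since $\rho_{H_s} = \rho_\Q \circ \N_{H_s/\Q}$ is visibly $\Gal(H_s/K)$-invariant, the local N\'eron symbol transports under the Galois action as
$$
\langle c_s, T_{m_0}(\bd_{s,r+2}^\sigma)\rangle_{v^{\tau_\fb}} = \langle c_s^{\tau_\fb^{-1}}, T_{m_0}(\bd_{s,r+2}^{\sigma \tau_\fb^{-1}})\rangle_v,
$$
and similarly for the pair of indices $(s-1,r+1)$. Applying the formula established at the end of \S \ref{conjugation} with $\tau$ there specialized to $\tau_\fb^{-1}$, so that the quaternionic order $R^{\fb^{-1}}$ appearing there becomes $R^\fb$ here, I obtain
$$
\langle c_s, T_{m_0}(\bd_{s,r+2}^\sigma)\rangle_{v^{\tau_\fb}} - \langle c_{s-1}, T_{m_0}(\bd_{s,r+1}^\sigma)\rangle_{v^{\tau_\fb}} = \log_p(\ell)\bigl(\Delta_s^\fa(R^\fb, mp^2) - \Delta_s^\fa(R^\fb, m)\bigr).
$$

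Summing over a set of coset representatives $\fb$ for $\Pic(\co_s)/\langle \fl \rangle$ reproduces the entire left-hand side of the proposition, and Lemma \ref{Eichler} says precisely that $\fb \mapsto R^\fb$ descends to a bijection between $\Pic(\co_s)/\langle \fl \rangle$ and $\mathcal{S}$. Consequently the right-hand side collapses to $\log_p(\ell) \sum_{S \in \mathcal{S}}(\Delta_s^\fa(S, mp^2) - \Delta_s^\fa(S, m))$, which by definition equals $\log_p(\ell)(\Delta_s^\fa(mp^2) - \Delta_s^\fa(m))$.

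The main point requiring care is the Galois-equivariance identity for the local pairing: one must verify that under the isomorphism $H_{s,v} \iso H_{s, v^{\tau_\fb}}$ induced by $\tau_\fb$, the local characters $\rho_{H_{s,v}}$ and $\rho_{H_{s,v^{\tau_\fb}}}$ are exchanged, so that the local N\'eron symbol really does translate as claimed. This is routine given the definition $\rho_{H_s} = \log_p \circ (\rho_\Q \circ \N_{H_s/\Q})$ from \S \ref{global pairing}, provided one keeps the \emph{arithmetic} normalization of reciprocity straight. Once that is in hand, the proposition is essentially a bookkeeping argument combining \S \ref{conjugation} with Lemma \ref{Eichler}.
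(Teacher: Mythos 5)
Your proposal is correct and follows the same route as the paper: parametrize the primes $w\mid\ell$ by $\Pic(\co_s)$ modulo the class of $\fl$, transport the local N\'eron symbol to the fixed place $v$ via Galois equivariance, invoke the computation of \S \ref{conjugation} to express each term as a difference of quaternionic counts $\Delta_s^\fa(R^{\fb},\cdot)$, and finally use Lemma \ref{Eichler} to recognize the resulting sum as $\Delta_s^\fa(\cdot)$. The only difference from the paper is cosmetic (a change of variable $\fb\leftrightarrow\fb^{-1}$ in indexing the orders), and you have simply spelled out explicitly the Galois-equivariance step the paper leaves implicit.
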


\begin{proof}
Let $\Pic^\ell(\co_s)$ denote the quotient of $\Pic(\co_s)$ by
the subgroup generated by the class of the 
unique prime of $K$ above $\ell$.  Then
$\Pic^\ell(\co_s)$ acts simply transitively on the set $\mathcal{S}$
by Lemma \ref{Eichler}, and also acts simply transitively on the 
primes of $H_s$ above $\ell$.  If we let $\fb$ vary over a set
of representatives of $\Pic^\ell(\co_s)$ and use the
relation $\langle x^\tau, y^\tau\rangle_v=
\langle x,y\rangle_{\tau^{-1}(v)}$ for $\tau\in\Gal(H_s/K)$,
then the claim follows from the discussion above.
\end{proof}


\section{N\'eron symbols above $p$}
\label{p neron}


In this section we use the methods of Perrin-Riou \cite[\S 5.3]{pr1}
to analyze the $p$-adic N\'eron symbol on $X_0(N)$ at primes above 
$p$.

Fix $s>0$, $\sigma\in\Gal(H_s/K)$, and assume that $\epsilon(p)=1$
and $D\not= -3,-4$.
As always, we let $\fa\subset\co_s$ be a proper ideal whose
Artin symbol is $\sigma$.
For any positive integer $m$, we let $T_m$ be the usual Hecke 
correspondence on $X_0(N)$ (taking the Atkin-Lehner $U_\ell$ at primes
dividing $N$).
For any correspondence $T$ from a curve to itself,
we let $T^\iota$ denote the transpose correspondence.
Thus $T_m=T_m^\iota$ for $(m,N)=1$.
If $\mathfrak{p}$ is one of the two primes of $K$ above $p$,
we let $\delta$ be the order of $\mathfrak{p}$ in the ideal class
group of $K$.


\subsection{Some modular forms}
\label{modular forms}


Fix a place $v$ of $H_s$ above $p$.

\begin{Lem}\label{delta congruence}
Let $R$ be the integer ring of $H_{s,v}$ and let 
$\ubh_{s,r}^\sigma$ be the horizontal divisor
of $X_0(N)_{/R}$ with generic fiber $\bh_{s,r}^\sigma$.
For any divisor $\uC$ on $X_0(N)_{/R}$, there is a constant $c=c(\uC)$
such that the intersection multiplicity 
$i(\uC,\ubh_{s,r}^\sigma)$ of \S \ref{intersection theory}
depends only on $r\pmod{\delta}$ when $r >c $.
\end{Lem}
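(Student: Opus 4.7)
The strategy is to exploit Serre-Tate deformation theory at $v$, which is available because the hypothesis $\epsilon(p)=1$ forces each Heegner diagram $h_{s+r}$ to have ordinary reduction at $v$. I first reduce to the case where $\uC$ is a horizontal prime divisor not meeting the cusps: fibral contributions are controlled by the multiplicities of $\ubh_{s,r}^\sigma$ along each irreducible component of the special fiber, which are periodic in $r\pmod\delta$ for $r$ large by the degree bookkeeping below, and the sections comprising $\ubh_{s,r}^\sigma$ never meet the cuspidal locus.

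Since $\epsilon(p)=1$, the reduction of each $h_{s+r}$ has CM by the prime-to-$p$ part of $\co_{s+r}$, namely $\co_K$. Hence the support of $\ubh_{s,r}^\sigma$ in the special fiber is contained in a fixed finite set $\Sigma$ of closed points representing Heegner diagrams of conductor $1$ over $\overline{\mathbb{F}}_p$ with appropriate orientation, \emph{independent of $r$}. For each $\bar x\in\Sigma$, Serre-Tate theory and the ordinarity of $\bar x$ give a canonical identification of the complete local ring $\widehat{\co}_{\uX,\bar x}^{\mathrm{sh}}$ with $W[[q-1]]$, where $q$ is the Serre-Tate coordinate of the universal deformation. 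Under this identification, each section of $\ubh_{s,r}^\sigma$ passing through $\bar x$ corresponds to a quasi-canonical lift of a specific level dividing $p^{s+r}$; its Serre-Tate coordinate is cut out by a Lubin-Tate-type polynomial $\Phi_{s+r}(q)$, and by Gross's theory of quasi-canonical lifts the roots are arranged in $\Gal(H_{s+r}/H_s)$-orbits.

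Let $f\in W[[q-1]]$ be the local equation of $\uC$ at $\bar x$. The local contribution to $i(\uC,\ubh_{s,r}^\sigma)$ at $\bar x$ is then the sum of $v(f(q_0-1))$ as $q_0$ ranges over the appropriate roots of $\Phi_{s+r}$. The hypothesis that $\mathfrak p^\delta=(\pi)$ is principal, together with the fact that a generator $\pi$ acts via a formal endomorphism of the Serre-Tate disk, implies that the roots of $\Phi_{s+r+\delta}$ that are not already roots of $\Phi_{s+r}$ cluster toward the boundary of the formal disk, with their $v$-adic distance from $1$ tending to $0$ as $r\to\infty$. For $r>c(\uC)$ sufficiently large, depending only on the $q$-adic order of vanishing of $f$ at each $\bar x\in\Sigma$, every such new root $q_0$ satisfies $v(f(q_0-1))=0$; the new sections contribute nothing, the old sections contribute as before, and one deduces $i(\uC,\ubh_{s,r+\delta}^\sigma)=i(\uC,\ubh_{s,r}^\sigma)$ for $r>c$.

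The main obstacle will be the Serre-Tate bookkeeping: matching the Galois-theoretic description of the Heegner points $h_{s+r}$ with explicit Serre-Tate coordinates in a way that is uniform in $r$, so that the constant $c(\uC)$ depends only on local data of $\uC$ at the finitely many points of $\Sigma$, and not on the conductor $p^{s+r}$ of the Heegner points being considered.
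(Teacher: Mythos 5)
There is a real gap in the argument. You treat the divisors $\ubh_{s,r}^\sigma$ and $\ubh_{s,r+\delta}^\sigma$ as if the sections of the latter contained those of the former plus some ``new'' sections, speaking of ``the roots of $\Phi_{s+r+\delta}$ that are not already roots of $\Phi_{s+r}$''. But quasi-canonical lifts of different levels are disjoint: the Serre--Tate coordinates of the conductor-$p^{s+r}$ CM points and those of the conductor-$p^{s+r+\delta}$ CM points are \emph{disjoint} sets (roughly, primitive $p^{s+r}$- versus $p^{s+r+\delta}$-power roots of unity under the $\hat{\mathbf{G}}_m$ identification). There is no nesting, so the decomposition into ``old roots contribute as before'' and ``new roots contribute nothing'' is not available. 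Moreover the claim that a new root $q_0$ close to the boundary satisfies $v(f(q_0-1))=0$ is false whenever the section actually passes through $\bar x$: if $f$ is distinguished of degree $e$ in $W[[q-1]]$, then for $v_W(q_0-1)$ smaller than every Newton slope of $f$ one has $v_W(f(q_0-1))=e\cdot v_W(q_0-1)>0$, and it is only the \emph{sum} over the $\sim p^r$ conjugate roots, each contributing $\sim e/p^r$, that stabilizes --- a Newton-polygon/resultant count, not termwise vanishing. A correct version of your route would therefore have to be argued via resultants or Newton polygons for the distinguished polynomials $\Phi_{s+r}$ and $\Phi_{s+r+\delta}$; as written, neither the nesting assumption nor the vanishing claim holds.

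The paper's argument is structured quite differently. Passing to the ramified base $W(r)$, it first shows that the closed fibers of the sections $\uh^\sigma_{s+r}$ and $\uh^\sigma_{s+r-\delta}$ coincide (using that a generator of $\mathfrak p^\delta$ acts on the reduction as an iterate of Frobenius) but that the two \emph{sections} are distinct already modulo $\mathfrak m_{W(r)}^2$, giving the transverse intersection $i_{W(r)}(\uh^\sigma_{s+r},\uh^\sigma_{s+r-\delta})=1$ (here the hypothesis $D\neq -3,-4$ enters). Combined with the observation that $\uC$ and $\uh^\sigma_{s+r-\delta}$ are both base-changed from $W(r-\delta)$ through a ramified extension --- so any common component must meet $\uh^\sigma_{s+r-\delta}$ with multiplicity $>1$ --- this forces each component of $\uC$ to meet $\uh^\sigma_{s+r}$ with multiplicity exactly $0$ or $1$ over $W(r)$, and the count of such components depends only on the common closed point, hence only on $r\bmod\delta$. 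No Newton-polygon estimate appears; the periodicity of the closed point and the transversality bound do all the work.

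<br>

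<br>
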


\begin{proof}
It suffices to prove this when $\uC$ is effective.
The extension $H_\infty/H_0$ is totally ramified at $v$, and we let
$w$ denote the unique place of $H_\infty$ above $v$.
Let $F(r)$ be the completion of the maximal unramified extension of 
$H_{s+r,w}$ with integer ring $W(r)$, and let $W(r)_k$ be the quotient
of $W(r)$ by the $(k+1)^\mathrm{st}$-power of the maximal ideal.
Let $\hat{\Q}_p^\mathrm{unr}$
denote the completion of the maximal unramified extension 
of $\Q_p$.  The extension $H_{s+r,w}/H_{0,w}$ is totally ramified
of degree $p^{r+s-1}(p-1)$, and $H_{0,w}\subset \hat{\Q}_p^\mathrm{unr}$.
From this one easily deduces that $F(r)$ is the compositum of
$\hat{\Q}_p^\mathrm{unr}$ and  $H_{s+r,w}$ (so is abelian over $\Q_p$), and
that $F(r)/\hat{\Q}_p^\mathrm{unr}$ is totally ramified of degree
$p^{r+s-1}(p-1)$. By class field theory
$F(r)=\hat{\Q}_p^\mathrm{unr}(\mu_{p^{s+r}})$.
Decompose $\underline{C}=\sum_{k=0}^{e_r}\uy(k)$ 
as a sum of prime divisors on 
$X_0(N)_{/W(r)}$.  For $r$ greater than or equal to some $r_0$
the sequence $e_r$ is constant and $\uh_{s,r}^\sigma$ has no components
in common with $\uC$.  Abbreviate $e=e_{r_0}$ and take $c=r_0+\delta$.

Fix $r_1>c$, $r=r_1+i\delta$ with $i\ge 0$,
and an extension of $\sigma$ to $\Gal(H_\infty/K)$.
By \cite[Lemma 2.4]{conrad} or \cite[Theorems 8, 9(1)]{st}
the point $h_{s+r}^\sigma\in X_0(N)(F(r))$ 
represents a Heegner diagram over $F(r)$
having good reduction, and so its Zariski closure 
$\uh_{s+r}^\sigma$ in  $X_0(N)_{/W(r)}$ is a section to the structure map
representing a Heegner diagram over $W(r)$.
As in \S \ref{modular intersections}, the choice of Heegner diagram
$\uh_{s+r}^\sigma$ determines a family of
isogenous Heegner diagrams over $W(r)$, 
$$
\uh_{s+r}^\sigma\map{} \uh_{s+r-1}^\sigma\map{}\ldots.
$$
The generic geometric kernel of the map 
$\uh^\sigma_{s+r}\map{}\uh^\sigma_{s+r-1}$ is stable
under the action of the absolute Galois group of $F(r)$,
and the Euler system relations of \S \ref{Hecke action} tell us that no
other order $p$ subgroup of $\uh^\sigma_{s+r}(F(r)^\alg)$ has this property.
Indeed, the remaining $p$ quotients by order $p$ subgroups are
permuted simply transitively by $\Gal(F(r+1)/F(r))$.
It follows that this kernel must be the kernel in $\uh_{s+r}^\sigma[p]$
 of reduction to  $W(r)_0$ (recall $\epsilon(p)=1$, 
so $\uh^\sigma_{s+r}$ has ordinary reduction) and the map
$\uh^\sigma_{s+r}\map{}\uh^\sigma_{s+r-1}$
reduces to the absolute Frobenius in the closed fiber.
The action of $\co_{s+r}$ on the closed fiber of $\uh^\sigma_{s+r}$ 
extends to an action of the maximal order 
(we have just shown that the closed fiber of $\uh^\sigma_{s+r}$
is isomorphic to a Galois conjugate of the closed fiber of 
$\uh^\sigma_{0}$), 
and if $\mathfrak{p}$ denotes the prime of $K$ below $v$, then 
the action of any generator of the principal ideal $\mathfrak{p}^\delta$ is a 
degree $p^\delta$ purely inseparable endomorphism, whose kernel
must therefore be the kernel of the $\delta^\mathrm{th}$-iterate of
Frobenius.  This shows that the Heegner diagrams 
$\uh^\sigma_{s+r}$ and $\uh^\sigma_{s+r-\delta}$ are isomorphic
over $\Spec(W(r)_0)$, and that the closed fiber of $\uh_{s+r}^\sigma$
is the base change to $W(r)$ of the closed fiber of the
Zariski closure of $h_{s+r_1}^\sigma$
on $X_0(N)_{/W(r_1)}$.

We claim that the Heegner diagram
$\uh^\sigma_{s+r-\delta}$
is distinct from $\uh^\sigma_{s+r}$ over $W(r)_1$, so that
Proposition \ref{first homs} gives the intersection formula
\begin{equation}\label{transverse reductions}
i(\uh^\sigma_{s+r},\uh^\sigma_{s+r-\delta})=\frac{1}{2}|\co_K^\times|=1
\end{equation}
on $X_0(N)_{/W(r)}$.  Indeed, if these Heegner diagrams are
isomorphic over $W(r)_1$, then the reduction of such an isomorphism
to $W(r)_0$ allows us to view $\uh^\sigma_{s+r-\delta}$
and $\uh^\sigma_{s+r}$ over $W(r)_1$ 
as isomorphic \emph{deformations} of the 
common closed fiber, which we denote by $g$.  Let 
$T=\mil g(W(r)_0)[p^k]\iso\Z_p$. The theory of
Serre-Tate coordinates (for example \cite{Goren} Chapter 3, Theorem 4.2)
associates to these Heegner diagrams over $W(r)$ (viewed as deformations
of $g$) two bilinear maps 
$$q_{s+r-\delta},q_{s+r}:T\otimes T\map{}1+\mathfrak{m}_{W(r)}.$$
The first surjects onto $\mu_{p^{s+r-\delta}}$, and the
second onto $\mu_{p^{s+r}}$.  Since we assume
the Heegner diagrams over $W(r)_1$ are isomorphic 
as deformations of $g$,  the bilinear maps
$q_{s+r-\delta},q_{s+r}$ are congruent modulo
$1+\mathfrak{m}_{W(r)}^2$.
This is a contradiction, as
$\mu_{p^{s+r-\delta}}$ is contained in $1+\mathfrak{m}_{W(r)}^2$
while $\mu_{p^{s+r}}$ is not (use the fact, noted above,
that $F(r)=\hat{\Q}_p^\mathrm{unr}(\mu_{p^{r+s}})$ to replace
$\mathfrak{m}_{W(r)}$ with the maximal ideal of $\Z_p[\mu_{p^{r+s}}]$).

Each prime divisor $\uy(k)$ occuring in the support of $\uC$
either does not meet the common closed point of 
$\uh^\sigma_{s+r-\delta}$, $\uh^\sigma_{s+r}$,
in which case 
$i(\uy(k),\uh^\sigma_{s+r}) =0,$
or it does, in which case $\uy(k)$ intersects both 
$\uh^\sigma_{s+r-\delta}$ and $\uh^\sigma_{s+r}$.
Assume we are in the latter case.  The divisors $\uy(k)$ and 
$\uh_{s+r-\delta}^\sigma$
on $X_0(N)_{/W(r)}$ both arise as the base change of divisors defined over
$W(r-\delta)$.  Since base change through a finite extension multiplies 
intersections by the ramification degree,
$i(\uy(k),\uh^\sigma_{s+r-\delta})>1 $.
If also $i(\uy(k),\uh^\sigma_{s+r})>1$ then 
$i(\uh^\sigma_{s+r},\uh^\sigma_{s+r-\delta})>1$, 
contradicting (\ref{transverse reductions}).  
Thus $i(\uy(k),\uh^\sigma_{s+r})=1$. We have shown that
$$
i(\underline{C}, \ubh_{s,r}^\sigma)_R=
i(\underline{C}, \uh_{s+r}^\sigma)_{W(r)}
=\sum_{k=0}^{e}i(\uy(k),\uh^\sigma_{s+r})_{W(r)}
$$
(the subscripts denoting the bases over which the intersections are
computed) is equal to the number of $\uy(k)$, $0\le k\le e$, which contain the
closed point of $\uh^\sigma_{s+r}$.  By the discussion earlier this is 
equal to the number of $\uy(k)$ on $X_0(N)_{/W(r_1)}$ which contain the
closed point of the Zariski closure of $h^\sigma_{s+r_1}$
on $X_0(N)_{/W(r_1)}$, which is equal to 
$i(\underline{C}, \ubh_{s,r_1}^\sigma)_R$
by taking $r=r_1$ in the preceeding argument.
\end{proof}

Let us say that a  divisor $C$ on $X_0(N)_{/H_{s,v}}$ 
has \emph{good support} if its support contains no cusps except
possibly for the cusp $0$.  Note that the set of such divisors
is stable under the action of $T_m^i$ for any $m$.
This follows easily from the fact that the main Atkin-Lehner involution 
$w$ on $X_0(N)$ satisfies $wT_m w=T_m^\iota$ and $w\cdot \infty=0$, 
and that $T_m\cdot \infty$ is supported at $\infty$.
For $C$ of degree zero with good support we define a formal $q$-expansion
\begin{equation}\label{phi def}
\phi(C)_v=\sum_{m=m_0 p^r}
\langle C,T_{m_0}\bd^\sigma_{s,r}\rangle_v q^m\\
\end{equation}
where $\langle\ ,\ \rangle_v$ is the $p$-adic N\'eron symbol on 
$X_0(N)_{/H_{s,v}}$ of Proposition \ref{curve height},
and where for any integer $m>0$ we write
$m=m_0 p^r$ with $(m_0,p)=1$.
Let $U$ denote the shift operator on formal $q$-expansions
$U(\sum a_m q^m)=\sum a_{mp}q^m$.  The $q$-expansion $\phi(C)_v$
is only defined if $C$ has support prime to
$T_{m_0}(\bd^\sigma_{s,r})$ for every $m=m_0 p^r$, 
but for any $C$ with good support and degree $0$ 
the $q$-expansion $U^k\phi(C)_v$
is defined for $k\gg 0$.  Indeed, the geometric points
in the support of  $T_{m_0}(\bd_{s,r+k}^\sigma)$ each represent either
the cusp $\infty$ or a
CM elliptic curve such that the valuation at $p$ of the 
conductor of the CM order is exactly $s+r+k$.

We can use the Lemma \ref{delta congruence}
to compute $p$-adic N\'eron symbols at $v$
in the only case where they are known to be related to intersection pairings:
the case where one divisor is principal.

\begin{Cor}\label{principal neron}
Suppose  $C$ is the divisor of a rational function 
on $X_0(N)_{/H_{s,v}}$, and that $C$ has good support.
Then for each integer $m>0$
$$
\lim_{k\to\infty} a_m\big(U^{k}(U^\delta-1)\phi(C)_v\big)=0.
$$
\end{Cor}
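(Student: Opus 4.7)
I would proceed in three stages.

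\emph{Unpacking.} For $m=m_0p^r$ with $(m_0,p)=1$ and $k$ sufficiently large, the definitions of $U$ and $\phi(C)_v$ in (\ref{phi def}) give
\[
a_m\bigl(U^k(U^\delta-1)\phi(C)_v\bigr) \;=\; \langle C, T_{m_0}\bd^\sigma_{s,r+k+\delta}\rangle_v \;-\; \langle C, T_{m_0}\bd^\sigma_{s,r+k}\rangle_v.
\]
By bilinearity (Proposition \ref{curve height}(a)) this equals $\langle C, E_k\rangle_v$ where $E_k = T_{m_0}(\bd^\sigma_{s,r+k+\delta}-\bd^\sigma_{s,r+k})$. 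The good support hypothesis ensures $\mathrm{supp}(E_k)$ is disjoint from $\mathrm{supp}(C)$ for $k$ large: the noncuspidal points of $\mathrm{supp}(E_k)$ come from Heegner diagrams whose $p$-part of the CM-conductor is $p^{s+r+k}\to\infty$ and hence eventually avoid the finite set $\mathrm{supp}(C)\setminus\{0\}$, while the cusp $\infty$ is avoided by good support.

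\emph{Principal expansion.} Since $C=(f)$ is principal, Proposition \ref{curve height}(b) gives $\langle C, E_k\rangle_v = \rho_{L_v}(f(E_k))$ where $L_v = H_{s,v}$ and $f(E_k)\in L_v^\times$ is the value of the multiplicative norm formula (\ref{divisor decomp}). Because $\rho_{L_v}\colon L_v^\times\to\Z_p$ is a continuous additive character that kills roots of unity and restricts to a scaled $\log_p$ on principal units, it suffices to show $f(E_k)\in 1+\mathfrak{m}_v^{N(k)}$ for some sequence of integers $N(k)\to\infty$.

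\emph{Serre-Tate convergence.} The needed $v$-adic smallness comes from adapting the geometric analysis in the proof of Lemma \ref{delta congruence}. That proof shows that for $k$ large the horizontal sections underlying $\bd^\sigma_{s,r+k}$ and $\bd^\sigma_{s,r+k+\delta}$ reduce to the same closed point on the special fiber, and that the two Heegner-diagram deformations of this common closed fiber differ only through Serre-Tate coordinates landing in $\mu_{p^{s+r+k+\delta}}$. Hence $f$-values on matched sections agree modulo a high power of the maximal ideal of the ambient Witt ring. Norming down to $L_v$ through the totally ramified extension $H_{s+r+k+\delta,v}/L_v$ of degree $p^{r+k+\delta-1}(p-1)$, and propagating through the Hecke correspondence $T_{m_0}$, transforms this into the required $f(E_k)\in 1+\mathfrak{m}_v^{N(k)}$ with $N(k)\to\infty$. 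Continuity of $\rho_{L_v}$ then completes the proof.

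The hard part is the third stage: converting the qualitative ``same closed fiber, separated by Serre-Tate coordinates in $\mu_{p^{s+r+k+\delta}}$'' statement of Lemma \ref{delta congruence} into an \emph{effective} rate $N(k)\to\infty$ on the $v$-adic valuation of $f(E_k)-1$. I expect this to require careful tracking of ramification in the tower $H_{s+r+k+\delta,v}/L_v$ against the Serre-Tate data, in the spirit of Perrin-Riou's \S 5.3, so that the growth of ramification compensates for the finite intersection multiplicities appearing in Lemma \ref{delta congruence}.
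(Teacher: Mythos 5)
Your Stage 1 and Stage 2 are correct and essentially match the paper's setup: you unpack $a_m$ of the operator as a difference of N\'eron symbols against $\bd^\sigma_{s,r+k+\delta}$ and $\bd^\sigma_{s,r+k}$, and you invoke Proposition \ref{curve height}(b) to reduce to the $\rho$-image of a value of a rational function. (The paper transposes the Hecke correspondence and takes $(f)=T_{m_0}^\iota(C)$ rather than pairing $C$ against $T_{m_0}(\bd)$; this is the same move via Proposition \ref{curve height}(c).)

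Stage 3 contains a genuine gap, and the gap is at the step you yourself flag as ``the hard part.'' You want to deduce from Lemma \ref{delta congruence} that the two Heegner sections of conductor $p^{s+r+k}$ and $p^{s+r+k+\delta}$ are congruent to high order in the Witt ring, so that $f$-values on them agree modulo a high power of the maximal ideal. But the Serre--Tate argument in Lemma \ref{delta congruence} proves exactly the \emph{opposite}: the displayed transversality (\ref{transverse reductions}) says $i(\uh^\sigma_{s+r},\uh^\sigma_{s+r-\delta})=1$, i.e.\ the sections agree on the closed fiber but are \emph{not} congruent modulo $\mathfrak{m}_{W(r)}^2$. (This is the whole content of the paragraph ``We claim that the Heegner diagram $\uh^\sigma_{s+r-\delta}$ is distinct from $\uh^\sigma_{s+r}$ over $W(r)_1$\ldots''.) So there is no ``$f(E_k)\in 1+\mathfrak{m}_v^{N(k)}$ with $N(k)\to\infty$'' to be extracted here: congruence of degree $1$ is all you get, uniformly in $k$, and that yields no decay. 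Moreover $f(E_k)$ factors through $f(\infty)^{-(p^\delta-1)p^{r+k}}$, and for a general unit $f(\infty)$ that power does not $v$-adically approach $1$ (it only does so up to torsion, which $\rho$ kills), so the target ``$f(E_k)\to 1$'' is not even what one should aim for.

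The mechanism the paper actually uses is different and must be supplied. Two ingredients are missing from your plan. First, Lemma \ref{delta congruence} is used only to control \emph{valuations}: it shows $i(\underline{(f)},\ubh^\sigma_{s,r+k+\delta})=i(\underline{(f)},\ubh^\sigma_{s,r+k})$ for $k$ large, which, after rescaling $f$ so that $v(f(\infty))=i(\underline{(f)},\underline\infty)$, makes the ratio $f(\bh^\sigma_{s,r+k+\delta})/f(\bh^\sigma_{s,r+k})$ a \emph{unit} in $H_{s,v}$. Second — and this is the crux — that unit is a norm of some $u_k\in H_{s+r+k,v}^\times$, because $\bh^\sigma_{s,r+k}$ is by definition a Galois norm of a single point of level $s+r+k$. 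Since $\rho_{H_{s,v}}=\rho_{\Q_p}\circ\Norm_{H_{s,v}/\Q_p}$, one lands on $\rho_{\Q_p}\big(\Norm_{H_{s+r+k,v}/\Q_p}(u_k)\big)$; and since $p$ is split in $K$, $H_{s+r+k,v}/\Q_p$ is an abelian, increasingly ramified tower, so the unit norms from it shrink toward $\{1\}\subset\Z_p^\times$. The leftover cuspidal term $-(p^\delta-1)p^{r+k}\cdot\rho_{H_{s,v}}(f(\infty))$ tends to $0$ because of the $p^{r+k}$ factor, not because $f(\infty)$ is close to $1$. Your plan does not engage the norm-compatibility of $\bh_{s,r}$ in $r$ at all, and it omits the role of $\epsilon(p)=1$, both of which are essential: without knowing the local tower is ramified over $\Q_p$, the decay of $\rho_{\Q_p}$ on unit norms fails.
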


\begin{proof}
Write $m=m_0 p^r$ with $(m_0,p)=1$. The divisor $T_{m_0}^\iota(C)$
is again principal with good support, and we fix a rational
function $f$ with $(f)=T_{m_0}^\iota(C)$.
Writing $v$ for the normalized valuation on $H_{s,v}$, the 
intersection theory of \S \ref{intersection theory} gives
$$
v\big(f(\bd^\sigma_{s,r+k+\delta})\big)=[(f),\bd^\sigma_{s,r+k+\delta}]
=i(\underline{(f)}, \ubh_{s,r+k+\delta}^\sigma))
-p^{r+k+\delta}\cdot i(\underline{(f)}, \underline{\infty}))
$$
where the underlining of divisors indicates passing to horizontal divisors
on $X_0(N)_{/R}$, $R$ the integer ring of $H_{s,v}$.
Similarly
$$
v\big(f(\bd^\sigma_{s,r+k})\big)=[(f),\bd^\sigma_{s,r+k}]
=i(\underline{(f)}, \ubh_{s,r+k}^\sigma))
-p^{r+k}\cdot i(\underline{(f)}, \underline{\infty})).
$$
From this and Lemma \ref{delta congruence} we deduce
\begin{eqnarray*}
v\left(\frac{f(\bh^\sigma_{s,r+k+\delta})}{f(\bh^\sigma_{s,r+k})}\right)
&=&
v\left(\frac{f(\bd^\sigma_{s,r+k+\delta})}{f(\bd^\sigma_{s,r+k})}\right)
+ (p^\delta-1)p^{r+k}\cdot v\big(f(\infty)\big) \\
&=&
(p^\delta-1)p^{r+k}\cdot \big[v\big(f(\infty)\big)-
i(\underline{(f)}, \underline{\infty}) \big]
\end{eqnarray*}
for $k$ large.  Multiplying $f$ by an element of $H_{s,v}^\times$ 
does not change
$(f)$, and so we may assume that 
$v(f(\infty))=i(\underline{(f)}, \underline{\infty})$.
Then $\frac{f(\bh^\sigma_{s,r+k+\delta})}{f(\bh^\sigma_{s,r+k})}$
is a unit in $H_{s,v}$ for $k$ large.
It is also the norm of some  $u_k\in H_{s+r+k,v}$, the completion of
$H_{s+r+k}$ at the unique prime above $v$.
Using Proposition \ref{curve height}(b)
\begin{eqnarray*}
a_m( U^k(U^\delta-1)\phi(C)_v )&=&
\langle C, T_{m_0}\bd^\sigma_{s,r+k+\delta}\rangle_v
-\langle C, T_{m_0}\bd^\sigma_{s,r+k}\rangle_v\\
&=&\rho_{H_{s,v}}(f(\bd^\sigma_{s,r+k+\delta}))-
\rho_{H_{s,v}}(f(\bd^\sigma_{s,r+k}))\\
&=& \rho_{H_{s,v}}
\left(\frac{f(\bh^\sigma_{s,r+k+\delta})}{f(\bh^\sigma_{s,r+k})}\right)
-(p^\delta-1)p^{r+k}\rho_{H_{s,v}}(f(\infty))\\
&=&\rho_{\Q_p}(\Norm_{H_{s+r+k,v}/\Q_p}(u_k))
-(p^\delta-1)p^{r+k}\rho_{H_{s,v}}(f(\infty)).
\end{eqnarray*}
\emph{Since $p$ is split}, the field $H_{s+r+k,v}$ is abelian 
over $\Q_p$, the unit norms from $H_{s+r+k,v}$ to $\Q_p$
converge to $1$ as $k\to\infty$, and so the final expression converges
to $0$.
\end{proof}

Given any point $P\in J_0(N)(H_{s,v})$ we may choose a degree zero
divisor $C$ on $X_0(N)_{/H_{s,v}}$ having good support
 which represents $P$. Corollary \ref{principal neron}
implies that for any sequence of integers $b=(b_k)$ with $b_k\to\infty$, 
the $q$-expansion with $\Q_p$-coefficients
$$
\Phi_b(P)_v\stackrel{\mathrm{def}}{=}
\lim_{k\to\infty} U^{b_k}(U^\delta-1)\phi(C)_v,
$$
if the limit exists (in the sense of coefficient-by-coefficient
convergence; there is no assuption of uniformity)
depends only on $P$ and not on the choice of $C$.

\begin{Def}\label{admissible}
A sequence of integers $b=(b_k)$ is \emph{admissible} if $b_k\to\infty$
and if the limit (coefficient-by-coefficient)
defining $\Phi_b(P)_v$ exists for every $P\in J_0(N)(H_{s,v})$.
\end{Def}

\begin{Lem}\label{admiss lemma}
Any sequence of integers tending to $\infty$ admits an admissible
subsequence.  
\end{Lem}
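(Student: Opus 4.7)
The approach is a Cantor diagonalization that reduces the uncountable condition of admissibility to countably much data, combined with an equicontinuity argument to pass from a dense subgroup to all of $V=J_0(N)(H_{s,v})$. As the Mordell--Weil group of an abelian variety over the local field $H_{s,v}$, $V$ is a compact $p$-adic Lie group whose formal-group subgroup of finite index is topologically a finitely generated $\Z_p$-module. I will fix a finite set $\{P_1,\ldots,P_n\}\subset V$ of topological generators of $V\otimes\Z_p$ together with generators of the finite prime-to-$p$ torsion, and fix representative degree-zero divisors $C_1,\ldots,C_n$ of good support; let $G_0\subset V$ denote the countable subgroup of finite $\Z$-linear combinations of the $P_i$'s.

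For each fixed $m$ and $i$, the $m^{\mathrm{th}}$ Fourier coefficient $a_m(U^{b_k}(U^\delta-1)\phi(C_i)_v)$ is, for $k$ large, the difference of two N\'eron symbols on $X_0(N)_{/H_{s,v}}$, and so lies in a compact subset of $\Q_p$ independent of $k$ by the final assertion of Proposition~\ref{curve height}. A standard Cantor diagonal argument applied to the countable collection of pairs $(m,i)$ produces a subsequence $(b_{k_j})$ along which all of these sequences converge simultaneously. By bilinearity of the N\'eron symbol, convergence is inherited by every $\Z$-linear combination, so $\lim_j a_m(U^{b_{k_j}}(U^\delta-1)\phi(C)_v)$ exists for every $m$ and every $P\in G_0$, with $C=\sum n_i C_i$ representing $P=\sum n_i P_i$.

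The final step is to extend pointwise convergence from the dense subgroup $G_0$ to all of $V$ by equicontinuity. Up to the ambiguity in the choice of a representative $C_P$ (which is controlled by Corollary~\ref{principal neron}), the functions $F_{m,j}\colon P\mapsto a_m(U^{b_{k_j}}(U^\delta-1)\phi(C_P)_v)$ are continuous in $C_P$ by Proposition~\ref{curve height}(d) together with symmetry, are $\Z$-linear, and take values in a fixed compact subset of $\Q_p$ independent of $j$. A bounded family of continuous $\Z$-linear maps from the compact topological $\Z_p$-module $V$ to $\Q_p$ is equicontinuous, so pointwise convergence on the dense set $G_0$ forces pointwise convergence on all of $V$, and hence $(b_{k_j})$ is admissible. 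The main obstacle I expect is navigating the divisor-level ambiguity: $C_P$ is not literally a function of $P$, and the $\Z$-linearity of $F_{m,j}$ holds only modulo principal divisors with good support, so one must appeal to Corollary~\ref{principal neron} uniformly in $j$ to verify both that the limit is well-defined on $V$ and that it is independent of the choice of representatives.
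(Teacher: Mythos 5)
Your proof takes a genuinely different route from the paper's, and I believe the route as sketched has a real gap at precisely the point you flag as ``the main obstacle.''

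The equicontinuity step does not go through. The functions $F_{m,j}$ are not maps on $V=J_0(N)(H_{s,v})$: for a fixed $j$, the quantity $a_m\big(U^{b_{k_j}}(U^\delta-1)\phi(C)_v\big)$ is a local N\'eron symbol, and by Proposition~\ref{curve height}(b) its value on a principal divisor $(f)$ is $\rho_{H_{s,v}}\big(f(\bd^\sigma_{s,b_{k_j}+*})\big)$, which is in general nonzero; only the \emph{limit} over $j$ of this quantity vanishes (that is the content of Corollary~\ref{principal neron}). So for each fixed $j$ you have an additive map on the group of good-support divisors that does not descend to $V$. The general fact you want to invoke --- that a bounded family of continuous $\Z_p$-linear maps on a compact $\Z_p$-module is equicontinuous --- therefore never applies, because you have no family of maps on $V$ to begin with. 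And Corollary~\ref{principal neron} cannot repair this: it asserts termwise convergence to $0$ for each fixed principal divisor, with no uniformity in $j$ and no uniformity over the (infinite, non-compact) set of principal divisors with good support, both of which an equicontinuity argument would require.

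The paper sidesteps both the diagonalization and any continuity argument. First, it uses the recurrence \eqref{phi recursion} (derived from the Euler system relations in the proof) to reduce convergence of $a_m$ for all $m$ to convergence of just $a_1$ and $a_p$, applied to $T^\iota_{m_0}C$ in place of $C$; thus no Cantor diagonal over $m$ is needed. Second, for a fixed coefficient ($a_1$ or $a_p$), it arranges convergence for finitely many $C_1,\ldots,C_r$ whose classes span $J_0(N)(H_{s,v})\otimes\Q_p$, and then for an arbitrary good-support $C$ writes $nC-\sum n_iC_i=(f)$ for some \emph{fixed} nonzero integer $n$, integers $n_i$, and rational function $f$. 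Bilinearity of the N\'eron symbol gives
$$
n\cdot a_1\big(U^{b_k}(U^\delta-1)\phi(C)_v\big)=\sum_i n_i\, a_1\big(U^{b_k}(U^\delta-1)\phi(C_i)_v\big)+a_1\big(U^{b_k}(U^\delta-1)\phi((f))_v\big),
$$
and Corollary~\ref{principal neron} is now applied to the single fixed divisor $(f)$, where termwise convergence is exactly what is needed --- no uniformity enters. This is finitely many subsequence extractions, not a dense subgroup plus a limiting argument. If you want to pursue your route, you would either have to prove an honest equicontinuity statement for the bilinear forms $\langle\ ,\ \bd^\sigma_{s,k+\delta}-\bd^\sigma_{s,k}\rangle_v$ uniformly in $k$, or abandon the dense-subgroup step in favor of the paper's finite-spanning-set reduction and the recurrence in $m$.
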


\begin{proof}
Fix a sequence $b=(b_k)$ of integers tending to $\infty$.
Let $C$ be a degree zero divisor on $X_0(N)_{H_{s,v}}$ with good support, and
consider the first Fourier coefficient
$$a_1(U^{b_k}(U^\delta-1)\phi(C)_v)=
\langle C, \bd^\sigma_{s,b_k+\delta}
-\bd^\sigma_{s,b_k}\rangle_v.$$
By the final claim of Proposition \ref{curve height} 
the sequence on the right hand side takes
values in a compact subset of $\Q_p$, and so we may choose a convergent
subsequence.  By Corollary \ref{principal neron} and the finite
dimensionality of $J_0(N)(H_{s,v})\otimes\Q_p$, 
we may repeat this process, eventually replacing $b$ by a 
subsequence (still denoted $b$, abusively) such that
$$\lim_{k\to\infty} a_1(U^{b_k}(U^\delta-1)\phi(C)_v)$$
exists for every degree zero divisor with good support.
By the same argument we may assume that the limit 
$\lim_{k\to\infty} a_p(U^{b_k}(U^\delta-1)\phi(C)_v)$
also exists for all such divisors.
Now fix $m=m_0p^r$ with $(m_0,p)=1$. From the definition of $\phi$ we have
\begin{equation}\label{recursive crap}
a_{m}(U^{b_k}(U^\delta-1)\phi(C)_v)
=
a_{p^r}(U^{b_k}(U^\delta-1)
\phi(T_{m_0}^\iota C)_v)
\end{equation}
(for $k$ large enough that both sides are defined).
If $r=0$ or $1$ then the limit as $k\to\infty$ exists by the above
choice of $b$.  For $r>1$ we use the Euler system relations of
\S \ref{Hecke action} to see that
\begin{eqnarray*}
\bd^\sigma_{s,r+b_k} 
&=&
\Norm_{H_{s+b_k+1}/H_s}\bd^\sigma_{s+b_k+1,r-1}\\
&=&
\Norm_{H_{s+b_k+1}/H_s}\big( T_{p^{r-1}}d^\sigma_{s+b_k+1}- 
T_{p^{r-2}}d^\sigma_{s+b_k} \big) \\
&=& T_{p^{r-1}} \bd^\sigma_{s,b_k+1} -p T_{p^{r-2}} \bd^\sigma_{s,b_k}
\end{eqnarray*}
which, together with the same formula with $b_k$ replaced by
$b_k+\delta$, implies that the right hand side of 
(\ref{recursive crap}) equals (for $k\gg 0$)
$$
a_p(U^{b_k}(U^\delta-1)\phi(T^\iota_{m_0 p^{r-1}}C)_v)-p\cdot
a_1(U^{b_k}(U^\delta-1)\phi(T^\iota_{m_0 p^{r-2}}C)_v),
$$
and this limit exists as $k\to\infty$.
\end{proof}

Fix an admissible sequence $b$.
Note that the above proof shows that 
\begin{equation}\label{phi recursion}
a_{mp}(\Phi_b(P)_v)=\left\{\begin{array}{ll}
a_p(\Phi_b(T_m^\iota P)_v) &\mathrm{if\ }(m,p)=1\\
a_p(\Phi_b(T_m^\iota P)_v)
-p a_1(\Phi_b(T_{m/p}^\iota P)_v)&\mathrm{else.}\end{array}\right.
\end{equation}
Let $\Hecke^\mathrm{full}$ denote the 
$\Q_p$-algebra generated by the Hecke operators
$T_m$ for all $m>0$ acting on $J_0(N)$.  
For any $P\in J_0(N)(H_{s,v})$ and any $i>0$,
the linear functional on $\Hecke^\mathrm{full}$ defined by
$T\mapsto a_i(\Phi_b(T^\iota P)_v)$
determines a $p$-adic modular form 
$$
h_i(P)=\sum a_i(\Phi_b(T_m^\iota P)_v)\cdot  q^m \in 
S_2(\Gamma_0(N),\Q)\otimes\Q_p
$$
of level $\Gamma_0(N)$ (as does any linear functional on 
$\Hecke^\mathrm{full}$; this follows from 
\cite[\S 5.3 Theorem 1]{Hida book} and the identification of 
$\Hecke^\mathrm{full}$ with the Hecke algebra acting on weight two
cusp forms).  
The relation (\ref{phi recursion}) can be written as 
$$
U\cdot \Phi_b(P)_v=h_p(P)-p V\cdot h_1(P)
$$
where $V(\sum a_n q^n)=\sum a_n q^{pn}$.  As $V$ takes modular forms
of level $\Gamma_0(N)$ to modular forms of level $\Gamma_0(Np)$, 
we may define
$$
\Psi_b(P)_v=U\cdot \Phi_b(P)_v\in M_2(\Gamma_0(Np),\cA)\otimes_{\cA}\cB
$$
for any $P\in J_0(N)(H_{s,v})$.


\subsection{Annihilation of $E_\sigma$}


Recall Hida's ordinary projector 
$e^\ord=\lim_{k\to\infty}U^{k!}$ from \S \ref{L function}.
Fix an admissible 
(in the sense of Definition \ref{admissible}, and for all primes above
$p$ simultaneously) subsequence $b=(b_k)$ of
$k!$ and define, for any $P\in J_0(N)(H_s)$, a $p$-adic modular form
$\Psi_b(P)=\sum_{v|p}\Psi_b(P)_v$ where the
sum is over primes $v$ of $H_s$ above $p$.  Similarly,
define $\phi(C)=\sum_v\phi(C)_v$ (whenever $\phi(C)_v$ is
defined for all $v$ above $p$).

In the next section we shall see that there is a modular form
$$E_\sigma\in M_2(\Gamma_0(Np^\infty),\cA)\otimes\cB$$ with the following
property: if  $\langle\ ,\ \rangle_p$
denotes the sum of the local $p$-adic N\'eron symbols on $X_0(N)_{/H_{s,v}}$
at the primes of $H_s$ above $p$, then for any $m=m_0p^r$ with $(m_0,Np)=1$
the $m^\mathrm{th}$ Fourier coefficient of $E_\sigma$
is given by the expression
\begin{eqnarray*}
a_m(E_\sigma)&=&\langle c_s, T_{m_0}(\bd^\sigma_{s,r+2})\rangle_p-
\langle c_{s-1}, T_{m_0}(\bd^\sigma_{s,r+1})\rangle_p\\
&=& a_{mp^2}(\phi(c_s))-a_{mp}(\phi(c_{s-1})),
\end{eqnarray*}
where, as in \S \ref{the proof}, $c_i=(h_i)-(0)$.
From this we immediately deduce the following

\begin{Lem}\label{E decomp}
There is a  modular form $g\in M_2(\Gamma_0(Np),\cA)\otimes\cB$
such that $a_m(g)=0$ whenever $(m,N)=1$, and
$$(U^\delta-1)e^\ord E_\sigma=U\Psi_b(c_s)-\Psi_b(c_{s-1})+g.$$
\end{Lem}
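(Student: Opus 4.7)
The plan is to take
\[
g := (U^\delta-1)e^\ord E_\sigma - U\Psi_b(c_s) + \Psi_b(c_{s-1})
\]
as the definition of $g$ and verify directly that $a_m(g)=0$ whenever $(m,N)=1$, using the Fourier coefficient formula for $E_\sigma$ promised above (and proved in the next section) together with admissibility of $b$. The identity will reduce to the observation that the shifts by $p^2$ and $p$ appearing in the Fourier formula for $E_\sigma$ are designed precisely to match $U^2\Phi_b(c_s)=U\Psi_b(c_s)$ and $U\Phi_b(c_{s-1})=\Psi_b(c_{s-1})$ after applying the ordinary projector.

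Concretely, since $b=(b_k)$ is a subsequence of $(k!)$, the coefficient-by-coefficient definition of $e^\ord$ will give
\[
a_m\bigl((U^\delta-1)e^\ord E_\sigma\bigr)=\lim_{k\to\infty} a_m\bigl(U^{b_k}(U^\delta-1)E_\sigma\bigr)
\]
for every $m$. I will then fix $m=m_0p^r$ with $(m_0,N)=1$, so that $(m_0,Np)=1$. For $k$ large the Fourier formula for $E_\sigma$ will apply to both $a_{mp^{b_k+\delta}}(E_\sigma)$ and $a_{mp^{b_k}}(E_\sigma)$, and grouping the four resulting terms by $\phi(c_s)$ and $\phi(c_{s-1})$ will yield
\[
a_m\bigl(U^{b_k}(U^\delta-1)E_\sigma\bigr)=a_m\bigl(U^{b_k+2}(U^\delta-1)\phi(c_s)\bigr)-a_m\bigl(U^{b_k+1}(U^\delta-1)\phi(c_{s-1})\bigr).
\]
Admissibility of $b$ (Definition \ref{admissible}) is precisely the statement that the limits $\Phi_b(c_s)$ and $\Phi_b(c_{s-1})$ exist coefficient-by-coefficient, so passing to $k\to\infty$ will give
\[
a_m\bigl((U^\delta-1)e^\ord E_\sigma\bigr)=a_m\bigl(U^2\Phi_b(c_s)\bigr)-a_m\bigl(U\Phi_b(c_{s-1})\bigr)=a_m\bigl(U\Psi_b(c_s)-\Psi_b(c_{s-1})\bigr),
\]
which is exactly $a_m(g)=0$.

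Finally, to see that $g\in M_2(\Gamma_0(Np),\cA)\otimes\cB$, I will use that $e^\ord$ lands in $M_2(\Gamma_0(Np),\cA)$ and that $U$ preserves this space, while $\Psi_b(c_s)$ and $\Psi_b(c_{s-1})$ lie in $M_2(\Gamma_0(Np),\cA)\otimes\cB$ by the construction recalled just before the lemma. The only non-trivial input is the Fourier formula for $E_\sigma$ itself, whose proof is postponed to the next section; given that formula, the lemma becomes an accounting exercise matching the $p^2$- and $p$-shifts in $E_\sigma$ with the definitions of $\Phi_b$ and $\Psi_b=U\Phi_b$, and this bookkeeping (rather than any genuine analytic difficulty) is the real content of the step.
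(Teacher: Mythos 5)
Your proof is correct and follows exactly the paper's intended approach; the paper's one-line proof (``Compare both sides coefficient-by-coefficient'') is just an invitation to carry out precisely the bookkeeping you do. In particular, your use of the Fourier formula $a_m(E_\sigma)=a_{mp^2}(\phi(c_s))-a_{mp}(\phi(c_{s-1}))$ for $m=m_0p^r$ with $(m_0,Np)=1$, the passage $e^\ord=\lim U^{b_k}$ along the admissible subsequence $b$, and the identification $U\Psi_b=U^2\Phi_b$, $\Psi_b=U\Phi_b$ assemble into the same coefficient comparison the author has in mind.
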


\begin{proof}
Compare both sides coefficient-by-coefficient.
\end{proof}

The significance of Lemma \ref{E decomp} is the following:
while $E_\sigma$ depends a priori on the
divisors $c_s$ and $c_{s-1}$, the $p$-adic modular forms $\Psi_b(c_s)$ and 
$\Psi_b(c_{s-1})$ depend only on the images in $J_0(N)(H_s)$.  
This plays a  crucial role in the
proof of the following proposition.

\begin{Prop}\label{annihilation}
Let $f$ be the modular form fixed in the introduction.
The $p$-adic modular form $E_\sigma$ is annihilated by the linear functional
$L_f$ of Lemma \ref{linear lemma}.
\end{Prop}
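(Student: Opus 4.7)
The plan is to apply the linear functional $L_f$ to the identity in Lemma \ref{E decomp} and reduce everything to showing $L_f(\Psi_b(P))=0$ for every $P\in J_0(N)(H_s)$. Using $L_f\circ e^\ord=L_f$ and $L_f\circ U=\alpha L_f$ from Lemma \ref{linear lemma}(a,d), the left-hand side of Lemma \ref{E decomp} under $L_f$ becomes
\[
L_f\big((U^\delta-1)e^\ord E_\sigma\big)=(\alpha^\delta-1)\,L_f(E_\sigma).
\]
On the right-hand side, the error term $g$ has $a_m(g)=0$ for $(m,N)=1$, so $L_f(g)=0$ by Lemma \ref{linear lemma}(c), and the right-hand side reduces to $\alpha L_f(\Psi_b(c_s))-L_f(\Psi_b(c_{s-1}))$. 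Since $\alpha$ is a root of $x^2-a_p(f)x+p$ with $|\alpha|_{\C}=\sqrt{p}$, it is not a root of unity, so $\alpha^\delta-1\ne 0$. It therefore suffices to prove $L_f(\Psi_b(P))=0$ for every $P$, since this will kill both $L_f(\Psi_b(c_s))$ and $L_f(\Psi_b(c_{s-1}))$ at once.

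To establish this vanishing, I will verify the hypothesis of Lemma \ref{linear lemma}(c), namely $a_m(\Psi_b(P))=0$ for every $m$ with $(m,N)=1$. Choose a divisor $C$ representing $P$ with good support and write $m=m_0p^r$ with $(m_0,p)=1$. Unwinding the definitions of $\Psi_b(P)_v=U\Phi_b(P)_v$ and of $\phi(C)_v$, and using that $T_{m_0}$ (a correspondence defined over $\Q$) commutes with Galois norms, one gets
\[
a_m(\Psi_b(P)_v)=\lim_{k\to\infty}\Big[\langle C,T_{m_0}\bd^\sigma_{s,r+1+b_k+\delta}\rangle_v-\langle C,T_{m_0}\bd^\sigma_{s,r+1+b_k}\rangle_v\Big].
\]
Because $\epsilon(p)=1$ and $H_{s+j}/H_s$ is totally ramified at each prime $v\mid p$, there is a unique prime $v'$ of $H_{s+j}$ above $v$, and locally $\bd^\sigma_{s,j}\big|_v=\Norm_{H_{s+j,v'}/H_{s,v}}(d^\sigma_{s+j}\big|_{v'})$. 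Moreover, the identification $F(r)=\hat{\Q}_p^{\mathrm{unr}}(\mu_{p^{s+r}})$ established in the proof of Lemma \ref{delta congruence} shows that $H_{s+j,v'}$ lies inside the cyclotomic $\Z_p$-extension of $H_{s,v}$. Proposition \ref{curve height}(e) then yields
\[
\langle C,T_{m_0}\bd^\sigma_{s,j}\rangle_v\ \in\ c^{-1}\rho_{\Q_p}\!\big(\Norm_{H_{s+j,v'}/\Q_p}(H_{s+j,v'}^\times)\big),
\]
and the standard universal-norm property for the cyclotomic $\Z_p$-extension forces the right-hand side to shrink to $\{0\}$ as $j\to\infty$. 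Each of the two terms in the limit therefore tends individually to $0$, so $a_m(\Psi_b(P)_v)=0$ for every $v\mid p$. Summing over $v\mid p$ gives $a_m(\Psi_b(P))=0$ for all $m$ with $(m,N)=1$, and Lemma \ref{linear lemma}(c) finishes the proof.

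The key technical obstacle — and the single place where the hypothesis $\epsilon(p)=1$ is actually used in the argument — is verifying that the ring class tower $H_\infty/H_s$ is locally a sub-tower of the cyclotomic $\Z_p$-extension at each prime above $p$, which is what the splitness of $p$ in $K$ guarantees (and what permits the clean application of the universal-norm bound of Proposition \ref{curve height}(e)). In the inert case $\epsilon(p)=-1$ the local extensions pick up an anticyclotomic direction, so the naive universal-norm estimate fails, although as the introduction remarks one expects the proposition to remain true there. Notice also that this approach sidesteps any explicit appearance of the regularized Heegner points $z_s$: the vanishing $L_f\circ\Psi_b=0$ is a universal statement about all $p$-local N\'eron contributions coming from norm-compatible families of divisors in the cyclotomic tower.
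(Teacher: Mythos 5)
Your proposal takes a genuinely different route from the paper's, and it claims a much stronger statement than the paper proves. Where the paper reduces to $L_f(\Psi_b(P)_v)=0$ and then splits $J_0(N)(H_s)\otimes\cB$ into Hecke eigenspaces $J(H_s)_\beta$ — invoking Atkin--Lehner strong multiplicity one for $\beta\neq\beta_f$ (Lemma \ref{Ann II}) and a Serre--Tate/Dieudonn\'e/Schneider argument to reduce to universal norms for $\beta=\beta_f$ — you claim that each individual term $\langle C,T_{m_0}\bd^\sigma_{s,j}\rangle_v$ tends to $0$ as $j\to\infty$, giving $\Psi_b(P)_v=0$ \emph{identically} for every $P$.

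I do not believe this term-by-term vanishing, and the structure of the paper itself is evidence against it. If $\langle C,T_{m_0}\bd^\sigma_{s,j}\rangle_v\to 0$ as $j\to\infty$ for all $C$ of good support, then Corollary \ref{principal neron} would be an immediate corollary (both terms in the difference go to $0$), and its actual proof --- which requires Lemma \ref{delta congruence} and a careful renormalization of the rational function $f$ so that $f(\ubh_{s,r+k+\delta}^\sigma)/f(\ubh_{s,r+k}^\sigma)$ becomes a unit --- would be superfluous. Likewise Lemma \ref{admiss lemma}, whose whole point is to pass to an admissible \emph{subsequence} because only \emph{compactness} (not convergence to $0$) of the sequence is established, would be vacuous: every sequence would be admissible with $\Phi_b(P)_v\equiv 0$. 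And the entire eigenspace decomposition in the proof of Proposition \ref{annihilation} would be unnecessary. The author is clearly working under the expectation that the individual symbols $\langle C,T_{m_0}\bd^\sigma_{s,j}\rangle_v$ do not vanish in the limit; only certain differences do, and only for special $C$ (either principal, or after applying the projector $L_f$).

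The concrete gap is in your invocation of Proposition \ref{curve height}(e). As stated, it bounds $\langle\N_{L'/L_v}C,D\rangle_v$ with the norm applied to the \emph{first} argument; you need the norm in the \emph{second} (Heegner) slot. You propose to fix this via the symmetry of Proposition \ref{curve height}(a), but you should be more careful here: the proof that (e) descends from Proposition \ref{pr height}(e) uses the defining formula $\langle C,D\rangle_{X}=\langle\mathfrak{C},\alpha_*D\rangle_A+\rho(f(D))$, and the norm in \ref{pr height}(e) is on the divisor-class ($A^\vee$) slot $\mathfrak{C}$, not on the zero-cycle ($A$) slot $d$. When you bring the Heegner divisor (a norm from $L'$) to the first slot via symmetry, you are asserting a uniform denominator bound that the paper takes great pains to establish only after first localizing to a single Hecke eigenspace and reducing to universal norm elements. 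The paper's universal-norm step (Schneider/Mazur plus ordinarity of $f$, which is what makes $a_p(f)$ a unit and pushes $P$ into $\hat G^{0,\mathrm{et}}(R)$) is not decorative: it is there precisely because the norm bound cannot simply be read off for an arbitrary $P$ by swapping slots. You should either justify why the combination ``symmetry plus \ref{curve height}(e)'' really gives a uniform bound with the norm in the Heegner slot, or follow the paper and put the norm on the $P$-side.
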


\begin{proof}
By Lemmas \ref{linear lemma}(c,d) and \ref{E decomp} 
$$
(\alpha^\delta-1)L_f(E_\sigma)=L_f((U^\delta-1)e^\ord E_\sigma)
=\alpha L_f(\Psi_b(c_s))- L_f(\Psi_b(c_{s-1})),
$$
and so it suffices to show that
$L_f(\Psi_b(P)_v)=0$ for every $P\in J_0(N)(H_s)$ and every prime $v$ of
$H_s$ above $p$.  Fix one such prime and let $\Hecke$ 
be the $\Q$-algebra generated by all
$T_\ell$ with $(\ell,N)=1$ acting on $J_0(N)$.
Recall from the introduction the decomposition 
$$
J_0(N)(H_s)\otimes\cB\iso
\bigoplus_{\beta}J(H_s)_{\beta}
$$
where the sum is over all algebra homomorphisms
$\beta:\Hecke\map{}\Q_p^\alg$ (and recall that all such maps take values in 
$\cB$ by hypothesis) and 
$\Hecke$ acts on $J(H_s)_{\beta}$ through the character $\beta$.
Let $\beta_f$ be the homomorphism associated to the fixed newform $f$.

Suppose $P\in J(H_s)_\beta$ for some character $\beta$, and
extend $\Psi_b(\ )_v$ $\cB$-linearly to $J_0(N)(H_{s})\otimes\cB$.
We treat the cases $\beta\not=\beta_f$ and $\beta=\beta_f$ separately.

\begin{Lem}\label{Ann II}
If $\beta\not=\beta_f$ then $L_f(\Psi_b(P)_v)=0$.
\end{Lem}

\begin{proof}
Use the notation $\tilde{T}_m$ for 
Hecke operators in level $\Gamma_0(Np)$.
For any $m$ prime to $Np$ we have
$$
a_m(f) L_f(\Psi_b(P)_v)= L_f(\tilde{T}_m\Psi_b(P)_v)
=L_f(\Psi_b(T_m P)_v)
=\beta(T_m)L_f(\Psi_b(P)_v)
$$
(the first equality is by Lemma \ref{linear lemma}, the
second is a straightforward calculation, and the third is obvious).
Thus if $L_f(\Psi_b(P)_v)\not=0$ then $\beta_f(T_m)=\beta(T_m)$ 
for all $(m,Np)=1$. The Atkin-Lehner strong multiplicity one theorem 
\cite[Lemma 24]{AL} thus implies that $\beta_f=\beta$, a contradiction.
\end{proof}

\begin{Lem}
If $\beta=\beta_f$ then $L_f(\Psi_b(P)_v)=0$.
\end{Lem}

\begin{proof}
We follow the lead of \cite[Exemple 4.12]{pr1}.
Let $R$ be the integer ring of $H_{s,v}$,
$\mathfrak{m}$ the maximal ideal of $R$, and $\mathbf{F}=R/\mathfrak{m}$.
Let $G_n$ be the $p^n$-torsion
of the  N\'eron model of $J_0(N)$ over $R$, a finite group scheme over $R$.
Let $G_n^0$ and $G_n^{\mathrm{et}}$ be the connected component
and maximal \'etale quotient of $G_n$, respectively, and let
$G_n^{0,\mathrm{et}}$ (resp. $G_n^{0,0}$) be the maximal subgroup scheme
of $G_n^0$ with \'etale dual (resp. quotient with connected dual).

By the theory of Dieudonn\'e modules the Frobenius and Verschiebung
morphisms on $(G^{0,0}_n)_{/\mathbf{F}}$ are nilpotent, and so by
the Eichler-Shimura congruence the same is true of the Hecke
operator $T_p$.  This is equivalent to 
$T_p^i(I)\subset\mathfrak{m} I$ for some $i$, where
$A$ is the Hopf algebra over $R$ 
associated to the affine group scheme $G^{0,0}_n$, $I$ is the kernel of the 
augmentation map $A\map{}R$, and $T_p$ is now viewed as an $R$-algebra
map $A\map{}A$.  For any Artinian quotient
$R/\mathfrak{m}^k R$ of $R$ and any $R$-algebra map 
$\tau:A\map{} R/\mathfrak{m}^k R$,
$$
(\tau\circ T_p^{ik})(I)\subset \tau(\mathfrak{m}^kI)=0.
$$
Back in the world of group schemes, this says that $T_p$ acts as a nilpotent
operator on $G_n^{0,0}(R/\mathfrak{m}^k)$ for any $k$ and any $n$.
From this it follows easily that $T_p$ acts as a topologically
nilpotent operator on $R$-valued points of the 
formal group scheme $\hat{G}^{0,0}$
associated to the $p$-divisible group $\dlim G_n^{0,0}$.

Let $\hat{G}^0$ and $\hat{G}^{0,\mathrm{et}}$ be the
formal group schemes associated to $G_n^0$ and $G_n^{0,\mathrm{et}}$,
respectively.  As $\hat{G}^0(R)\subset J_0(N)(H_{s,v})$ with 
finite index, we may identify 
$$
\hat{G}^0(R)\otimes\cB\iso J_0(N)(H_{s,v})\otimes\cB.
$$
As $\beta_f(T_p)=a_p(f)\in\cA^\times$ is a unit, any element of
$\hat{G}^0(R)\otimes\cB$
 on which $\Hecke$ acts through $\beta_f$ must come
from the subspace $\hat{G}^{0,\mathrm{et}}(R)\otimes_{\Z_p}\cB$.
We are thus reduced to the case $P\in \hat{G}^{0,\mathrm{et}}(R)$.
By \cite[Theorem 1(i)]{Schneider} (together with the proof of
\cite[Theorem 2]{Schneider}), the universal norms in 
$\hat{G}^{0,\mathrm{et}}(R)$ from any ramified $\Z_p$-extension
of $H_{s,v}$ have finite index.  We are thus further reduced 
to the case where $P\in J_0(N)(H_{s,v})$ is a universal 
norm from $L_\infty$, the cyclotomic $\Z_p$-extension of $H_{s,v}$.
Let $L_n\subset L_\infty$ be the extension of $H_{s,v}$ with 
$[L_n:H_{s,v}]=p^n$, and write
$P=\N_{L_n/L_0}Q_n$ for some $Q_n\in J_0(N)(L_n)$.
Lift $Q_n$ to a degree zero divisor on $X_0(N)_{/L_n}$ with support prime to
the cusps.
Then for $m=m_0p^r$ with $(m_0,p)=1$,
\begin{eqnarray*}
a_{m}(\Psi_b(P)_v) &=&
\lim_{k\to\infty}
a_m\big(U^{b_k+1}(U^\delta-1)\phi(\N_{L_n/L_0}Q_n)_v\big) \\
& = & \lim_{k\to\infty}
\big\langle \N_{L_n/L_0}Q_n , 
T_{m_0}\bd^\sigma_{s,b_k+1+\delta+r}
-T_{m_0}\bd^\sigma_{s,b_k+1+r}
\big\rangle_{X_0(N),H_{s,v}}.
\end{eqnarray*}
Using Proposition \ref{curve height}(e), we at last deduce $\Psi_b(P)_v=0$.
\end{proof}

This completes the proof of Proposition \ref{annihilation}.
\end{proof}

\begin{Rem}
The reader is invited to reconsider the case $\beta=\beta_f$ 
under the additional hypothesis that $f$ is ordinary at \emph{every}
place of $\Q^\alg$ above $p$.  Then the abelian variety (up to isogeny) $A_f$
attached to $f$ by Eichler-Shimura theory is ordinary at $p$, and a
theorem of Mazur \cite[Proposition 4.39]{mazur} tells us that
the universal norm subgroup of $A_f(H_{s,v})$ from a ramified $\Z_p$-extension
has finite index.
\end{Rem}


\section{Completion of the proofs}
\label{fin}


Assume $D$ is odd and $\not=-3$, and that $\epsilon(p)=1$.
Fix $s>0$ and $\sigma\in\Gal(H_s/K)$.  Let $\fa$ be a proper
integral $\co_s$-ideal of norm prime to $p$ whose class in $\Pic(\co_s)$
represents $\sigma$.
Recall from \S \ref{the proof} the $p$-adic modular form $F_\sigma$ 
defined by
$$
F_\sigma= U^2 F_\sigma^{s,s}-U F_\sigma^{s,s-1}-UF_\sigma^{s-1,s}
+F_\sigma^{s-1,s-1}\ \ \in M_2(\Gamma_0(Np),\cA)\otimes_{\cA}\cB.
$$

\begin{Prop}\label{F expansion}
For every $m=m_0p^r$ with $(m_0,Np)=1$, 
\begin{eqnarray}
a_m(F_\sigma)&=&\langle c_s, T_{mp^2}(d_s^\sigma)\rangle
-\langle c_s, T_{mp}(d_{s-1}^\sigma)\rangle  
+\langle c_{s-1}, T_m(d_{s-1}^\sigma)\rangle
-\langle c_{s-1}, T_{mp}(d_s^\sigma)\rangle \nonumber\\
&=& 
\langle c_s, T_{m_0}(\bd^\sigma_{s,r+2})\rangle-
\langle c_{s-1}, T_{m_0}(\bd^\sigma_{s,r+1})\rangle\label{no tangents}.
\end{eqnarray} 
where 
$\langle\ ,\ \rangle=\langle\ ,\ \rangle_{X_0(N),H_s}
$
is the global pairing of  (\ref{global curve}) viewed as a pairing
on $J_0(N)(H_s)$,
 and $c_s$, $d_s$, $\bc_{s,r}$, and $\bd_{s,r}$ are as in \S \ref{the proof}.
Furthermore, extending the height pairing $\cB$-bilinearly
to $J_0(N)(H_s)\otimes\cB$,
$$
L_f(F_\sigma)=
(\alpha^{2}-1)\alpha^{2s}\langle z_{s},z_{s}^\sigma\rangle
$$
where $L_f$ is the linear functional on 
$M_2(\Gamma_0(Np^\infty),\cA)$ of Lemma \ref{linear lemma}
and $z_s$ is the regularized Heegner point appearing in 
Theorem \ref{main result}.
\end{Prop}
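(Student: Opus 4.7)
The plan is to compute Fourier coefficients one $F_\sigma^{i,j}$ at a time, apply the $U$-operator identity $a_m(U^k g) = a_{mp^k}(g)$, and then use the Euler system relations in $J_0(N)(H_s)\otimes\cB$ to collapse the Hecke sums into norms. Finally we combine the Fourier identity with Lemma \ref{linear lemma} and an Atkin-Lehner strong multiplicity one argument to evaluate $L_f(F_\sigma)$.

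First I would compute $a_m(F_\sigma^{i,j})$ for $(m,N)=1$. Since $a_m(f_\beta)=\beta(T_m)$ and $T_m$ acts on the summand $J(H_s)_\beta$ through $\beta$, we get
$$a_m(F_\sigma^{i,j}) = \sum_\beta \beta(T_m)\langle c_i, d_{j,\beta}^\sigma\rangle = \sum_\beta \langle c_i, T_m d_{j,\beta}^\sigma\rangle = \langle c_i, T_m d_j^\sigma\rangle,$$
and the first displayed equation for $a_m(F_\sigma)$ follows by combining the four pieces of $F_\sigma$ with the identity $a_m(U^kg)=a_{mp^k}(g)$. To pass to (\ref{no tangents}), write $m=m_0p^r$, factor $T_m = T_{m_0}T_{p^r}$, and apply the Euler system relations of \S\ref{Hecke action} \emph{inside} the Jacobian: the relation $T_{p^{r+2}}(h_s) = \bh_{s,r+2}+T_{p^{r+1}}(h_{s-1})$ as divisors on $X_0(N)$, together with the Manin-Drinfeld theorem (cuspidal degree-zero divisors are torsion, hence vanish after tensoring with $\cB$), yields
$$T_{p^{r+2}}(d_s^\sigma) - T_{p^{r+1}}(d_{s-1}^\sigma) = \bd_{s,r+2}^\sigma\qquad\text{in } J_0(N)(H_s)\otimes\cB,$$
and the analogous identity with $r+1,r$ in place of $r+2,r+1$. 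Plugging these into the first formula and using $T_{m_0}$-linearity of the pairing produces (\ref{no tangents}).

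For the $L_f$-computation, the key input is that $L_f(f_\beta)=0$ unless $\beta=\beta_f$. This follows from Lemma \ref{linear lemma}(c): for $\beta\neq\beta_f$, by Atkin-Lehner strong multiplicity one there exists a prime $\ell$ prime to $Np$ with $\beta(T_\ell)\neq a_\ell(f)$, and so the idempotent $e_{f_0}=e_{f_0}e_f$ annihilates $f_\beta$ in level $Np$; for $\beta=\beta_f$, $L_f(f)=1-1/\alpha^2$ by Lemma \ref{linear lemma}(b). Hence $L_f(F_\sigma^{i,j})=(1-1/\alpha^2)\langle c_i, d_{j,f}^\sigma\rangle$. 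Applying Lemma \ref{linear lemma}(d) ($L_f\circ U = \alpha L_f$) to each term gives
$$L_f(F_\sigma) = (1-1/\alpha^2)\bigl[\alpha^2\langle c_s,d_{s,f}^\sigma\rangle -\alpha\langle c_s,d_{s-1,f}^\sigma\rangle -\alpha\langle c_{s-1},d_{s,f}^\sigma\rangle +\langle c_{s-1},d_{s-1,f}^\sigma\rangle\bigr].$$

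To finish, I note that the Hecke decomposition $J_0(N)(H_s)\otimes\cB = \bigoplus_\beta J(H_s)_\beta$ is orthogonal under the symmetric, Hecke-equivariant pairing, so one may replace $c_i$ by $c_{i,f}$ in the first slot. Manin-Drinfeld then gives $c_{i,f} = d_{i,f} = h_{i,f}$ in $J(H_s)_f$, and the four-term bracket factors as $\langle\alpha h_{s,f}-h_{s-1,f},\alpha h_{s,f}^\sigma-h_{s-1,f}^\sigma\rangle$. Since $\alpha^{s+1}z_s = \alpha h_{s,f}-h_{s-1,f}$ (and similarly for $z_s^\sigma$), this bracket equals $\alpha^{2s+2}\langle z_s,z_s^\sigma\rangle$, yielding $L_f(F_\sigma)=(1-1/\alpha^2)\alpha^{2s+2}\langle z_s,z_s^\sigma\rangle = \alpha^{2s}(\alpha^2-1)\langle z_s,z_s^\sigma\rangle$, as claimed. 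The main bookkeeping obstacle is the Euler-system/Manin-Drinfeld step for (\ref{no tangents}), where one must verify that the cuspidal divisors $T_{p^k}(\infty)$ that appear when one replaces $h_\bullet$ by $d_\bullet=h_\bullet-\infty$ indeed cancel modulo torsion; everything else is formal assembly.
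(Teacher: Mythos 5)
Your proposal is correct and follows essentially the same route as the paper: the Hecke-equivariance computation of $a_m(F_\sigma^{i,j})$, the $U$-operator Fourier shift plus the Euler system relations (with cuspidal divisors killed modulo torsion) to get (\ref{no tangents}), and the $L_f$-evaluation via $L_f(f_\beta)=0$ for $\beta\neq\beta_f$, orthogonality of the Hecke summands, and factoring the four-term bracket as $\langle \alpha d_{s,f}-d_{s-1,f},\,\alpha d_{s,f}^\sigma-d_{s-1,f}^\sigma\rangle = \alpha^{2s+2}\langle z_s,z_s^\sigma\rangle$. The only difference is that you are more explicit about the Manin--Drinfeld cleanup step, which the paper compresses into ``follows easily from this and the Euler system relations.''
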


\begin{proof}
Recall, for $i,j\le s$ and any $m$,
$$
a_m(F_\sigma^{i,j})= \sum_\beta \langle c_i, d_{j,\beta}^\sigma\rangle
a_m(f_\beta)
$$
where the sum is over algebra homomorphisms $\beta:\Hecke\map{}\Q^\alg$,
$f_\beta$ is the associated primitive eigenform, and $d_{j,\beta}^\sigma$
is the projection of $d_j^\sigma\in J_0(N)(H_s)$ to $J(H_s)_\beta$.
Thus if $(m,N)=1$
$$
a_m(F_\sigma^{i,j})  
= \sum_\beta \langle c_i, \beta(T_m) d_{j,\beta}^\sigma\rangle \\
= \sum_\beta \langle c_i, T_m d_{j,\beta}^\sigma\rangle \\
= \langle c_i, T_m d_{j}^\sigma\rangle.
$$
The first claim follows easily from this and the Euler system relations
of \S \ref{Hecke action}.

For the second claim,
$$
L_f(F_\sigma)=\alpha^2 L_f(F_\sigma^{s,s})- \alpha 
L_f(F_\sigma^{s,s-1})- \alpha L_f(F_\sigma^{s-1,s})
+ L_f(F_\sigma^{s-1,s-1})
$$
by the final claim of Lemma \ref{linear lemma}.  It follows from 
the same lemma that $L_f(f_\beta)=0$ unless $f_\beta=f$ (as in 
the proof of Lemma \ref{Ann II}), while 
$L_f(f)=1-\alpha^{-2}$.  Therefore 
$$
L_f(F_\sigma^{i,j}) = (1-\alpha^{-2})
\langle c_i, d_{j,f}^\sigma\rangle = (1-\alpha^{-2})
\langle d_{i,f}, d_{j,f}^\sigma\rangle
$$
where the subscript $f$ indicates projection to the component 
$J(H_s)_{\beta_f}$ of
the algebra homomorphism $\beta_f:\Hecke\map{}\Q^\alg$ associated to $f$,
and the second equality uses the fact that $c_i-d_i=(\infty)-(0)$
is torsion in $J_0(N)(H_s)$ and that summands $J(H_s)_\beta$
are orthogonal for distinct $\beta$ (an easy consequence of
Proposition \ref{curve height}(c)).
This gives
\begin{eqnarray*}
L_f(F_\sigma) &=& (1-\alpha^{-2})\big[
\alpha^2\langle d_{s,f}, d_{s,f}^\sigma\rangle
-\alpha \langle d_{s,f}, d_{s-1,f}^\sigma\rangle
-\alpha \langle d_{s-1,f}, d_{s,f}^\sigma\rangle
+ \langle d_{s-1,f}, d_{s-1,f}^\sigma\rangle
\big] \\
&=& (1-\alpha^{-2})
\langle \alpha d_{s,f}- d_{s-1,f}, \alpha d_{s,f}^\sigma - 
d_{s-1,f}^\sigma\rangle \\
&=& (\alpha^2- 1)
\langle  \alpha^s z_s, \alpha^s z_s^\sigma\rangle
\end{eqnarray*}
as $z_s$ was defined to be $\alpha^{-s}(d_{s,f}-\alpha^{-1} d_{s-1,f})$
(in the introduction we abusively confused $h_i$ with $d_i=(h_i)-(\infty)$).
\end{proof}

As explained in \S \ref{the proof}, in each of the pairings 
of (\ref{no tangents}) the divisors
have disjoint supports, and so we may decompose 
$a_m(F_\sigma)=\sum_v a_m(F_\sigma)_v$ as a sum of local N\'eron symbols
on $X_v=X_0(N)\times_\Q H_{s,v}$ by defining
$$
a_m(F_\sigma)_v=\langle c_s, T_{m_0}(\bd^\sigma_{s,r+2})\rangle_v-
\langle c_{s-1}, T_{m_0}(\bd^\sigma_{s,r+1})\rangle_v
$$
where for each prime $v$ of $H_s$, 
$\langle\ ,\ \rangle_v=\langle\ ,\ \rangle_{X_v,\rho_{H_{s,v}}}$ 
is the local N\'eron symbol of Proposition \ref{curve height}.
We also define, for a rational prime $\ell$, 
$a_m(F_\sigma)_\ell=\sum_{v|\ell} a_m(F_\sigma)_v$.

\begin{Prop}\label{finite heights}
Suppose $(m,N)=1$.  Then
$$\sum_{\ell\not=p}a_m(F_\sigma)_\ell= 
a_{mp^{2s}}(G_{\sigma\kappa})-a_{mp^{2s+2}}(G_{\sigma\kappa} ),$$
where $G_\sigma$ is the $p$-adic modular form of Proposition \ref{L prop}.
\end{Prop}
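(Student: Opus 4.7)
The plan is to break the sum over $\ell\ne p$ according to the splitting behavior of $\ell$ in $K$ and then match the resulting expression to the formula of Corollary \ref{pr calc 2}. For $\ell\ne p$ split in $K$, Proposition \ref{split primes} shows that each local N\'eron symbol appearing in $a_m(F_\sigma)_v$ vanishes for $v\mid \ell$, so these primes contribute nothing to the left-hand side. On the right-hand side, the local factor in Corollary \ref{pr calc 2} is likewise $0$ for split $\ell$, so both sides of the claimed identity depend only on the nonsplit primes $\ell\ne p$.

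For each such nonsplit $\ell$, Proposition \ref{nonsplit evaluation} gives
$$\sum_{v\mid\ell}a_m(F_\sigma)_v=\log_p(\ell)\bigl(\Delta^\fa_s(mp^2)-\Delta^\fa_s(m)\bigr),$$
and Proposition \ref{quaternion sum} expands each $\Delta^\fa_s(M)$ (with $M\in\{m,mp^2\}$) as a sum over positive integers $n$ with $\ell\mid n$ and $(n,p)=1$ of the shape $\delta(n)\,r_\fa(Mp^{2s}|D|-nN)\cdot\varepsilon(\ell,n)$, where $\varepsilon(\ell,n)=\ord_\ell(\ell n)R_{\fa\fq\fn}(n/\ell)$ in the inert case and $\varepsilon(\ell,n)=\ord_\ell(n)R_{\fa\fq\fn\fl}(n/\ell)$ in the ramified case. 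Summing over all nonsplit $\ell\ne p$ and swapping the order of summation, I would obtain
$$\sum_{\ell\ne p,\ \mathrm{nonsplit}}\log_p(\ell)\,\Delta^\fa_s(M)=\sum_{\substack{n>0\\ (n,p)=1}}\delta(n)\,r_\fa(Mp^{2s}|D|-nN)\sum_{\substack{\ell\mid n\\ \epsilon(\ell)\ne 1}}\log_p(\ell)\,\varepsilon(\ell,n).$$

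I would then compare this with the formula of Corollary \ref{pr calc 2} applied at the argument $Mp^{2s}$, which produces the very same double sum but with an overall minus sign and with $R_{\fa\fn\mathfrak{c}}$ (from Lemma \ref{genus characters}) in place of $R_{\fa\fq\fn}$ or $R_{\fa\fq\fn\fl}$. Since Lemma \ref{genus characters} allows $\mathfrak{c}$ to be any proper integral $\co_s$-ideal with $\N(\mathfrak{c})\equiv -\ell\pmod{Dp}$, I can take $\mathfrak{c}=\fq$ in the inert case and $\mathfrak{c}=\fq\fl$ in the ramified case; both satisfy the required norm congruence by the construction of $\fq$ in \S\ref{quaternions}. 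With this identification the comparison yields $\sum_{\ell\ne p,\ \mathrm{nonsplit}}\log_p(\ell)\,\Delta^\fa_s(M)=-a_{Mp^{2s}}(G_{\sigma\kappa})$, and taking the difference between $M=mp^2$ and $M=m$ recovers the stated formula.

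The main obstacle is the careful genus-theoretic bookkeeping needed to identify the representation numbers $R_{\fa\fn\mathfrak{c}}$, $R_{\fa\fq\fn}$, and $R_{\fa\fq\fn\fl}$. This ultimately rests on the fact that the $\co_s$-genus of an ideal with norm prime to $Dp$ is pinned down by the norm modulo $Dp$, so any two ideals with $\N(\cdot)\equiv -\ell\pmod{Dp}$ lie in the same genus and hence produce equal $R$-counts. Once this compatibility is unwound, the proposition reduces to a direct side-by-side comparison of two explicit formulas, both of which are already in hand.
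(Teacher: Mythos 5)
Your proposal is correct and follows essentially the same route as the paper's own (extremely terse) proof: split the sum over $\ell$ according to splitting behavior in $K$, invoke Propositions \ref{split primes}, \ref{nonsplit evaluation}, and \ref{quaternion sum}, and match term-by-term against Corollary \ref{pr calc 2} using the congruences $\N(\fq)\equiv -\ell$ (inert) and $\N(\fq\fl)\equiv -\ell\pmod{Dp}$ (ramified) that were built into the choice of $\fq$ in \S\ref{quaternions}. The paper leaves the term-by-term comparison implicit, so your write-up is simply a more detailed record of the same argument.
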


\begin{proof} For any $\ell\not=p$,
Proposition \ref{split primes} shows that $a_m(F_\sigma)_\ell=0$
when $\epsilon(\ell)=1$, while Propositions
\ref{quaternion sum} and  \ref{nonsplit evaluation}
give an explicit formula for $a_m(F_\sigma)_\ell$ when 
$\epsilon(\ell)\not=1$. Corollary \ref{pr calc 2} 
gives an explicit formula for the right hand side.
\end{proof}

\begin{proof}[Proof of Theorem \ref{main result}]
If we define a $p$-adic modular form 
$E_\sigma\in M_2(\Gamma_0(Np^\infty),\cA)\otimes_\cA\cB$ by
$$
E_\sigma= F_\sigma- U^{2s}(1-U^2)G_{\sigma\kappa},
$$
then for every $m=m_0 p^r$ with  $(m_0,Np)=1$
Proposition \ref{finite heights} implies
$$
a_m(E_\sigma)=
\langle c_s, T_{m_0}(\bd^\sigma_{s,r+2})\rangle_p-
\langle c_{s-1}, T_{m_0}(\bd^\sigma_{s,r+1})\rangle_p.
$$
Proposition \ref{annihilation} now implies $L_f(E_\sigma)=0$, and so
$$
L_f(F_\sigma)=L_f(U^{2s}(1-U^2)G_{\sigma\kappa}).
$$
Applying Lemma \ref{linear lemma}(d) and Proposition 
\ref{F expansion}
$$
(\alpha^{2}-1)\alpha^{2s}\langle z_{s},z_{s}^\sigma\rangle_{X_0(N),H_s}
=
\alpha^{2s}(1-\alpha^2) L_f(G_{\sigma\kappa}).
$$
Summing over $\sigma$ and applying Proposition \ref{L prop},
$$
\sum_\sigma\eta(\sigma)\langle z_{s},z_{s}^\sigma\rangle_{X_0(N),H_s}
=
-\sum_\sigma\eta(\sigma)L_f(G_{\sigma\kappa})
=
-\log_p(\gamma_0)\eta(\kappa)\cdot \cL_{f,1}(\eta)
$$
for any character $\eta$ of $\Gal(H_s/K)$.  
We now view $z_s$ as an element of $J_0(N)(H_s)\otimes\cB$,
let $z_s^\vee$ be the image of $z_s$ in $J_0(N)(H_s)^\vee\otimes\cB$ under
the canonical polarization, and switch to the height
pairing $\langle\ ,\ \rangle_{J_0(N),H_s}$ of (\ref{global abelian}).
Recalling Remark \ref{Theta},
$$
\sum_\sigma\eta(\sigma)\langle z_s^\vee,z_{s}^\sigma\rangle_{J_0(N),H_s}=
\log_p(\gamma_0)\eta(\kappa)\cdot \cL_{f,1}(\eta).
$$
This completes the
proof of Theorem \ref{main result} when $s>0$.
If $\eta$ is a character of
$\Gal(H_0/K)$, then we may view $\eta$ as a character of $\Gal(H_s/K)$
for some $s>0$, and this does not change the value of $\cL_{f,1}(\eta)$.
As the $z_s$ and $z_s^\vee$ are norm compatible
\begin{eqnarray*}
\sum_{\sigma\in\Gal(H_s/K)}
\eta(\sigma)\langle z_{s}^\vee,z_{s}^\sigma\rangle_{J_0(N), H_s}
&=&\sum_{\sigma\in\Gal(H_0/K)}
\eta(\sigma)\langle z_{s}^\vee,z_{0}^\sigma\rangle_{J_0(N), H_s}\\
&=&\sum_{\sigma\in\Gal(H_0/K)}
\eta(\sigma)\langle z_0^\vee,z_{0}^\sigma\rangle_{J_0(N), H_0},
\end{eqnarray*}
so Theorem \ref{main result} holds also when $s=0$.
\end{proof}

\begin{proof}[Proof of Theorem \ref{main result II}]
If we show that
\begin{equation}\label{change of variety}
\langle y_s^\vee, y_s^\sigma\rangle_{E,H_s}
=
\langle z_s, z_s^\sigma\rangle_{J_0(N),H_s}
\end{equation}
for any $s$ then we are done, as Theorem \ref{main result} shows that
the two sides of the equality of Theorem \ref{main result II}
agree on all finite order characters.  Implicit in this
statement is that (\ref{change of variety}) holds 
for any choice of height pairing 
$\langle\ ,\ \rangle_{J_0(N),H_s}$ as in (\ref{global abelian})
(recall that the definition of (\ref{global abelian}) depends
on the possibly non-canonical choice of the local symbol 
$\langle\ ,\ \rangle_{J_0(N)_v,\rho_{H_{s,v}}}$ of Proposition 
\ref{pr height} 
for each place $v$ above $p$, and that there is a unique choice of
local symbol $\langle\ ,\ \rangle_{E_v,\rho_{H_{s,v}}}$ at every place $v$).
Fix a prime $v$ of $H_s$ and define a $\Q_p$-valued symbol 
$\langle c, d\rangle$ on pairs of degree zero divisors
on $E_v=E\times_{\Q}H_{s,v}$ with disjoint support (and $d$ rational
over $H_{s,v}$ point-by-point) by
$$
\langle c,d\rangle= \frac{1}{n}
\langle \phi^*c,\delta\rangle_{J_0(N)_v,\rho_{H_{s,v}}}
$$
where $\delta$ is a zero cycle  on $J_0(N)_v$ such that
$n\cdot d=\phi_*\delta$ for some $n$
(using the fact that $\phi_*: J_0(N)(H_{s,v})\map{}E(H_{s,v})$
has finite cokernel). 
It can be shown that the symbol 
$\langle\ ,\ \rangle$ satisfies the properties of Proposition
\ref{pr height}, and so must be the \emph{unique} symbol
$\langle\ ,\ \rangle_{E_v,\rho_{H_{s,v}}}$.  From this
one easily deduces the compatibility of the global symbols
(\ref{global abelian})
$$
\langle c,\phi_* d\rangle_{E,H_s}=\langle \phi^*c, d\rangle_{J_0(N),H_s}
$$
for $c\in E(H_s)$ and $d\in J_0(N)(H_s)$.
The equality (\ref{change of variety}) is then obvious from the 
definition of $y_s$ and $y_s^\vee$.
\end{proof}

\bibliographystyle{amsalpha}

\end{document}